\newcommand{\re}{\mathbb{R}}
\newcommand{\Z}{\mathbb{Z}}
\newcommand{\half}{\frac{1}{2}}
\newcommand{\eps}{\epsilon}
\def\af{\alpha}
\def\bt{\beta}
\newcommand{\reff}[1]{(\ref{#1})}
\newcommand{\mc}[1]{\mathcal{#1}}
\newcommand{\bdes}{\begin{description}}
\newcommand{\edes}{\end{description}}
\newcommand{\bal}{\begin{align}}
\newcommand{\eal}{\end{align}}
\newcommand{\bnum}{\begin{enumerate}}
\newcommand{\enum}{\end{enumerate}}
\newcommand{\bit}{\begin{itemize}}
\newcommand{\eit}{\end{itemize}}
\newcommand{\bea}{\begin{eqnarray}}
\newcommand{\eea}{\end{eqnarray}}
\newcommand{\be}{\begin{equation}}
\newcommand{\ee}{\end{equation}}
\newcommand{\baray}{\begin{array}}
\newcommand{\earay}{\end{array}}
\newcommand{\bsry}{\begin{subarray}}
\newcommand{\esry}{\end{subarray}}
\newcommand{\bca}{\begin{cases}}
\newcommand{\eca}{\end{cases}}
\newcommand{\bcen}{\begin{center}}
\newcommand{\ecen}{\end{center}}
\newcommand{\bbm}{\begin{bmatrix}}
\newcommand{\ebm}{\end{bmatrix}}
\newcommand{\bmx}{\begin{matrix}}
\newcommand{\emx}{\end{matrix}}
\newcommand{\bpm}{\begin{pmatrix}}
\newcommand{\epm}{\end{pmatrix}}
\newcommand{\btab}{\begin{tabular}}
\newcommand{\etab}{\end{tabular}}
\theoremstyle{plain}
\newtheorem{theorem}{Theorem}[section]
\newtheorem{thm}[theorem]{Theorem}
\newtheorem{prop}[theorem]{Proposition}
\newtheorem{lem}[theorem]{Lemma}
\newtheorem{cor}[theorem]{Corollary}
\newtheorem{corollary}[theorem]{Corollary}
\theoremstyle{definition}
\newtheorem{example}[theorem]{Example}
\newtheorem{exm}[theorem]{Example}
\newtheorem{remark}[theorem]{Remark}
\newtheorem{quest}[theorem]{Question}
\renewcommand{\subsection}[1]{
    \stepcounter{subsection}
    \settowidth{\hangindent}{\bf\thesubsection.~}
    \hangafter=1
    \bigskip\bigskip\noindent
    {\bf\hbox{\thesubsection.~}#1}\par
    \nobreak
    \medskip
}
\begin{document}

\title{ Positivity of Riesz Functionals and Solutions of Quadratic and
Quartic Moment Problems}
\author{
Lawrence Fialkow\footnote{Department of Computer Science, State University of New York,
New Paltz, New York 12561. Email: fialkowl@newpaltz.edu.
Research was partially supported by NSF Grant DMS-0758378}
\,\, and Jiawang Nie\footnote{Department of Mathematics, University of California San Diego,
9500 Gilman Drive, La Jolla, CA 92093.
The research was partially supported by NSF grants
DMS-0757212, DMS-0844775 and Hellman Foundation Fellowship.
Email: njw@math.ucsd.edu}
}

\maketitle

\begin{abstract}
%This paper studies the characterization of truncated multivariate moment sequences
%admitting representing measures
%supported in a set $K\subseteq\re^n$ via positivity of Riesz functionals.
We employ positivity of Riesz functionals to establish
representing measures (or approximate representing measures) for
truncated multivariate moment sequences.
For a truncated moment sequence $y$,
we show that $y$ lies in the closure of truncated moment sequences
admitting representing measures supported in
a prescribed closed set $K \subseteq \re^n$
if and only if the associated Riesz functional $L_y$ is $K$-positive.
For a determining set $K$, we prove that
if $L_y$ is strictly $K$-positive, then
$y$ admits a representing measure supported in $K$.
As a consequence, we are able to solve the truncated $K$-moment problem of degree $k$
in the cases:
(i) $(n,k)=(2,4)$ and $K=\re^2$;
(ii) $n\geq 1$, $k=2$, and $K$ is
defined by one quadratic equality or inequality.
In particular, these results solve the truncated moment
problem in the remaining
open cases of Hilbert's theorem on sums of squares.
\end{abstract}

\bigskip
\noindent
{\bf Keywords:} truncated moment sequence,
Riesz functional, (strict) $K$-positivity,
determining set, moment matrix, representing measure

\bigskip
\noindent
{\bf AMS subject classifications:}  47A57, 44A60, 47N40, 47A20

\section{Introduction}

Denote by $\Z_{+}$ the set of nonnegative integers
and let $|\af| = \af_1+\cdots+\af_n$ for
$\af \equiv (\af_1,\ldots,\af_n) \in \Z_{+}^n$.
Let $y=(y_\af)_{\af\in \Z_{+}^n, |\af| \leq k}$ be a real multisequence of
 degree $k$ in $n$ variables (also referred to as
a {\it{truncated moment sequence}}),
and let $K\subseteq \re^n$ be a closed set.
The {\it{truncated $K$-moment problem}}
 of degree $k$ concerns conditions on $y$ such that
it has a {\it $K$-representing measure}, i.e., a
positive Borel measure $\mu$ on $\re^n$, supported in $K$, such that
\begin{equation}
y_\af = \int_{\re^n} x^\af d \mu(x), \quad
\forall \, \af\in \Z_{+}^n : |\af| \leq k.
\end{equation}
(Here, $x^\af =
   x_1^{\af_1}\cdots x_n^{\af_n}$ for $x\equiv(x_1,\ldots,x_n) \in \re^n$.)
For $K= \re^n$, we refer to (1.1) simply as the {\it{truncated moment
problem}} and to  $\mu$ as a {\it{representing measure}}. Let
$\mathcal{P}_{k}\subset \re [x_{1},\ldots,x_{n}]$ denote the
polynomials of degree at most $k$. Corresponding to the sequence $y$
of degree $k$
is the {\it{Riesz functional}} $L_{y}:\mathcal{P}_{k} \longrightarrow \re$
defined by
$$
L_y(p) = \sum_{\af\in \Z_{+}^n: |\af| \leq k}  p_\af y_\af, ~~~~~\, \forall \,
p\equiv \sum_{\af\in \Z_{+}^n: |\af| \leq k}  p_\af x^\af \in \mathcal{P}_{k}.
$$
$L_y$ is said to be {\it K-positive} if
$$
L_y(p) \geq 0, \quad \forall \,
p\in \mc{P}_{k},~ p|_{K} \geq 0.$$
Further, $L_{y}$ is {\it{strictly $K$-positive}}
if  $L_{y}$ is $K$-positive and
$$L_y(p) > 0,  \quad \forall \,
p\in \mc{P}_{k},~ p|_{K} \geq 0,~ p|K \not\equiv 0.$$
 For
$K=\re^{n}$ we say simply that $L_{y}$ is {\it{positive}} or {\it{strictly
positive}}.
$K$-positivity is a necessary condition for $K$-representing measures,
for if $\mu$ is a $K$-representing measure
and $p\in  \mc{P}_{k}$ with $p|_K \geq 0$,
then $L_{y}(p) = \int_{K} p~d\mu \ge 0$.  The proof of
Tchakaloff's Theorem \cite{Tch} shows that if $K$ is compact, then
$K$-positivity is actually sufficient for
$K$-representing measures,
but this is not so
in general
(see below).
Nevertheless, in \cite{CF09} R.E. Curto
and the first-named author obtained the following
 solution to the truncated $K$-moment problem expressed
 in terms of $K$-positivity.
\begin{theorem} [Theorem 1.2, \cite{CF09}] A multisequence
 $y$ of degree $2d$ or $2d+1$ admits
a $K$-representing measure if and only if $y$ can be extended to
a sequence $\tilde{y}$ of degree $2d+2$ such that $L_{\tilde{y}}$ is
$K$-positive.
\end{theorem}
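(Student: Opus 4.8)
The plan is to prove the two implications separately: the forward direction (a $K$-representing measure yields a $K$-positive extension of degree $2d+2$) is short once a Tchakaloff-type theorem is available, while the converse is the substance and goes through a Hahn--Banach separation followed by a tightness argument. For necessity, suppose $y$ has degree $k\in\{2d,2d+1\}$ and admits a $K$-representing measure $\mu$. The obstruction to simply setting $\tilde y_\af := \int_{\re^n} x^\af\,d\mu$ for $|\af|\le 2d+2$ is that $\mu$ need not have finite moments in degrees $2d+1$ and $2d+2$. I would remove this by invoking a Tchakaloff-type theorem for truncated moment sequences valid for an arbitrary closed set $K$ (the refinement of \cite{Tch} due to Bayer and Teichmann): $y$ then also has a \emph{finitely atomic} $K$-representing measure $\nu=\sum_{i=1}^r \lambda_i\delta_{x_i}$ with $x_i\in K$, $\lambda_i>0$, and $\nu$ has all moments finite. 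Putting $\tilde y_\af := \sum_{i=1}^r \lambda_i x_i^\af$ for $|\af|\le 2d+2$ gives an extension of $y$, and for any $p\in\mathcal P_{2d+2}$ with $p|_K\ge 0$ we get $L_{\tilde y}(p)=\sum_i \lambda_i p(x_i)\ge 0$; hence $L_{\tilde y}$ is $K$-positive.

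For sufficiency, assume $\tilde y$ has degree $2d+2$, extends $y$, and $L_{\tilde y}$ is $K$-positive. Identify the space of degree-$(2d+2)$ moment vectors with $\re^N$, $N=\dim\mathcal P_{2d+2}$, and let $\mathcal R\subseteq\re^N$ be the convex cone generated by the vectors $v(x):=(x^\af)_{|\af|\le 2d+2}$, $x\in K$; every element of $\mathcal R$ is the moment vector of a finitely atomic measure supported in $K$. The first step is to show $\tilde y\in\overline{\mathcal R}$. If not, then since $\overline{\mathcal R}$ is a closed convex cone, the separation theorem yields $p\in\mathcal P_{2d+2}$ with $L_z(p)\ge 0$ for all $z\in\mathcal R$ but $L_{\tilde y}(p)<0$; testing the first inequality against the generators $z=v(x)$ gives $p(x)\ge 0$ for all $x\in K$, i.e.\ $p|_K\ge 0$, contradicting $K$-positivity of $L_{\tilde y}$. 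Therefore $\tilde y=\lim_j \tilde y^{(j)}$ coordinatewise for some $\tilde y^{(j)}\in\mathcal R$, each being the degree-$(2d+2)$ moment vector of a finitely atomic $K$-representing measure $\mu_j$.

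The final and main step is a compactness argument on the sequence $\{\mu_j\}$. Since $\tilde y^{(j)}\to\tilde y$, the degree-$(2d+2)$ moments of the $\mu_j$ are bounded; in particular the masses $\mu_j(\re^n)=\tilde y^{(j)}_0$ are bounded, and writing $\|x\|_2^{2d+2}=\sum_{|\af|=d+1}\binom{d+1}{\af}x^{2\af}$ we see that $\int\|x\|_2^{2d+2}\,d\mu_j\le M$ for some $M$. Then $\mu_j(\{\|x\|_2>R\})\le M R^{-(2d+2)}$ uniformly in $j$, so $\{\mu_j\}$ is tight; by Prokhorov's theorem a subsequence converges weakly to a finite measure $\mu$, supported in $K$ because $K$ is closed. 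Moreover, for $|\af|\le 2d+1$ and $\|x\|_2>R$ one has $|x^\af|\le\|x\|_2^{2d+1}\le\|x\|_2^{2d+2}/R$, so $\int_{\{\|x\|_2>R\}}|x^\af|\,d\mu_j\le M/R$ uniformly in $j$; this uniform integrability allows the moments of degree $\le 2d+1$ to be passed to the weak limit along the subsequence, giving $\int x^\af\,d\mu=\tilde y_\af=y_\af$ for all $|\af|\le k$. Thus $\mu$ is a $K$-representing measure for $y$.

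I expect the main obstacle to be this last compactness step, and in particular the reason the extension is taken to degree $2d+2$ --- the least even integer exceeding $\deg y$. A bound on the degree-$2m$ moments does two jobs simultaneously: through the positive-definite form $\|x\|_2^{2m}$ it controls the tails of the $\mu_j$ and yields tightness (which needs an even exponent and $m\ge 1$); and since $2m$ strictly exceeds $\deg y$, it strictly dominates every monomial $x^\af$ with $|\af|\le\deg y$, which yields the uniform integrability needed to move those moments across the weak limit. For a sequence of degree $2d$ a single extra degree would still give tightness from the degree-$2d$ moments, but the strict domination would fail at the top degree and the limiting measure would only be guaranteed to represent $y$ through degree $2d-1$; the choice $2m=2d+2$ is the smallest one meeting both requirements.
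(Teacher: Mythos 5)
Your proof is correct. Note first that the paper does not prove this statement itself---it is quoted from \cite{CF09}---so the comparison is with an argument the paper only gestures at. Your forward direction (pass to a finitely atomic measure via Bayer--Teichmann so that all higher moments exist, then extend) is the standard one, and the first half of your converse is literally the paper's Theorem~\ref{thm:pos=>clos-rep}: separating $\tilde y$ from the closed convex cone generated by the vectors $[x]_{2d+2}$, $x\in K$, converts $K$-positivity into membership in $\overline{\mc{F}_{n,2d+2}(K)}$. The substantive part you supply is the extraction of an actual measure from the approximating sequence: the bounded degree-$(2d+2)$ moments give the tail bound $\mu_j(\{\|x\|_2>R\})\le MR^{-(2d+2)}$, hence tightness and a weak limit $\mu$ supported in the closed set $K$, and the strict domination $|x^\af|\le \|x\|_2^{2d+2}/R$ on $\{\|x\|_2>R\}$ for $|\af|\le 2d+1$ gives the uniform integrability that passes every moment of degree at most $2d+1\ge\deg y$ to the limit. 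This is sound, and your closing remark correctly isolates why the extension must reach the next even degree beyond $\deg y$: with one fewer degree the top-degree monomials are dominated only non-strictly, mass can escape to infinity, and this is exactly the failure mode of the $3\times 3$ example with $b>a$ in the introduction. The proof in \cite{CF09} is packaged differently---it compactifies $K$ via $x\mapsto (1+\|x\|_2^{2d+2})^{-1}[x]_{2d+2}$ and uses weak-$*$ compactness of measures on the compact image---but the mechanism is identical to your tightness argument. Two minor points to polish in a final write-up: Prokhorov's theorem is usually stated for probability measures, so either normalize by the masses $\tilde y^{(j)}_0$ (treating the degenerate case $\tilde y_0=0$ separately; it forces $y=0$ by your own tail estimate) or invoke the version for uniformly bounded families of finite measures; and observe via Fatou that the limit $\mu$ has finite moments through degree $2d+2$, so the limiting integrals you equate with $y_\af$ are finite.
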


A significant issue associated with Theorem 1.1 is that in general
it is quite difficult to establish that $L_{y}$ or $L_{\tilde{y}}$
is $K$-positive.
We show in Section 2 (Theorem 2.2)
that $L_{y}$ is $K$-positive if and only if
$\lim_{m\to\infty} \|y - y^{(m)} \| = 0$
for a sequence $\{y^{(m)}\}$ in which each
truncated moment sequence $y^{(m)}$ has a $K$-representing measure $\mu^{(m)}$.
In this case, for each $\alpha$, we have
$y_{\alpha} = \lim_{m\to\infty}\int_{K} x^{\alpha} d\mu^{(m)}(x)$, and we say that
$\{\mu^{(m)}\}$ is a sequence of
{\it{approximate representing measures}} for $y$.
%***I CHANGED THE TERMINOLOGY HERE TO EMPHASIZE THAT WE ALMOST HAVE
%A REPRESENTING MEASURE. CAN BE CHANGED AGAIN*************
This leads us to identify some cases of interest,
including certain multivariate quadratic and quartic moment problems,
in which we can utilize such approximating sequences
to establish $K$-representing measures for $y$
or $K$-positivity for $L_{y}$.
To explain our results further,
consider $K= \re^{n}$.
For $k=2d$, the moment sequence $y$ is associated with
the $d$-th order {\it{moment matrix}} $M_d(y)$ defined by
\[
M_d(y) = (y_{\af+\bt})_{(\af,\bt)\in \Z_{+}^n \times \Z_{+}^n: |\af|, |\bt| \leq d}.
\]
(We sometimes refer to a representing measure for $y$ as a
representing measure for $M_{d}(y)$.)
A basic necessary condition for
positivity of $L_{y}$ (and hence for the existence of a representing measure)
 is that   $M_{y}$ be positive semidefinite
($ M_d(y) \succeq 0$).
To see this, observe that $M_d(y)$ is uniquely determined by
the relation
\begin{equation}  \label{eq:Mpq}
    \langle M_{d}(y)\hat{p},\hat{q}\rangle =  L_{y}(pq) \quad p,q\in \mc{P}_{d},
\end{equation}
where $\hat{r}$ denotes the coefficient vector of $r\in \mathcal{P}_{d}$
relative to the basis for
  $\mathcal{P}_{d}$ consisting
of the monomials in degree-lexicographic order.
Thus, if $L_{y}$ is positive, then
           $\langle M_{d}(y)\hat{p},\hat{p}\rangle =  L_{y}(p^{2})
 \ge 0$. It is known that if $L_{y}$ is positive and
 $M_{d}(y)$ is singular, then $y$ need not have a representing measure;
 the simplest such example occurs with $n=1$, $d=2$ and $M_{2}(y)$  of
 the form
\[
M_2(y) =
\bbm
a & a & a  \\
a & a & a  \\
a & a & b
\ebm,
\]
with $b>a>0$ (cf. \cite[Example 2.1]{CF09}).
Nevertheless, the following question, essentially asked in
(\cite[Question 2.9]{CF09}),
remains unsolved.
\begin{quest}
Let $k=2d$.
If  $L_{y}$ is $K$-positive and $M_{d}(y)$ is positive definite,
 does $y$
have a $K$-representing measure; equivalently, does $L_{y}$
admit a
$K$-positive extension $L_{\tilde{y}}:\mathcal{P}_{2d+2} \longrightarrow
\re$?
\end{quest}
%Note from (1.2)
%that if  $L_{y}$ is strictly positive, then
%$M_{d}(y)$ is positive definite ($M_{d}(y)\succ 0)$.
In the sequel we say that $K$ is a {\it{determining set (of degree $k$)}} if
whenever $p\in \mathcal{P}_{k}$ and $p|K \equiv 0$, then $p \equiv 0$ (i.e.,
$p(x) = 0 \quad \forall x\in \re^n$);
sets $K$ with
nonempty interior are clearly determining. It follows readily from \reff{eq:Mpq} that
if $K$ is a determining set and $L_{y}$ is strictly $K$-positive, then
$M_{d}(y)\succ 0$.
Our main
tool in establishing $K$-representing measures is the following result,
which complements Theorem 1.1 and partially answers Question 1.2.
\begin{theorem} Suppose $K$ is a determining set of degree $k$ and
let $y$ be a truncated moment sequence of degree $k$ in $n$ variables.
If $L_{y}$ is strictly $K$-positive,
then $y$ admits a $K$-representing measure.
\end{theorem}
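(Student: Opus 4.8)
The plan is to combine the approximation result of Theorem 2.2 with a weak-* compactness argument, using strict $K$-positivity to prevent mass from escaping to infinity. Since $L_y$ is $K$-positive, Theorem 2.2 gives a sequence $\{y^{(m)}\}$ of truncated moment sequences of degree $k$ with $\|y-y^{(m)}\|\to 0$, where each $y^{(m)}$ has a $K$-representing measure $\mu^{(m)}$. The goal is to extract a weak-* convergent subsequence whose limit $\mu$ represents $y$. The obstruction to doing this directly is loss of mass: the $\mu^{(m)}$ need not be uniformly tight, so along a subsequence they could converge vaguely to a measure $\mu$ with $\int x^\alpha\,d\mu < y_\alpha$ for some $\alpha$ (or fail to converge at all). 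Strict $K$-positivity is exactly what rules this out, and extracting that consequence is the heart of the argument.

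Here are the steps I would carry out. First, because $K$ is determining and $L_y$ is strictly $K$-positive, we noted $M_d(y)\succ 0$ (for $k=2d$; for odd degree one passes to the even part, or argues directly), so in particular $y_0 = L_y(1) > 0$ and the total masses $\mu^{(m)}(K) = \int 1\,d\mu^{(m)} = y^{(m)}_0 \to y_0$ are bounded. Hence $\{\mu^{(m)}\}$ is a bounded sequence of positive measures and, by Banach--Alaoglu / Helly selection, has a subsequence (not relabeled) converging vaguely to a positive measure $\mu$ on $\re^n$, with $\operatorname{supp}\mu\subseteq K$ since $K$ is closed. Second, I would show $\int_{\re^n} |x|^k\,d\mu < \infty$ and that $\mu$ is in fact a representing measure for $y$. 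The key inequality: for any $p\in\mathcal{P}_k$ with $p|_K\ge 0$, vague convergence plus Fatou-type lower semicontinuity gives $\int_K p\,d\mu \le \liminf_m \int_K p\,d\mu^{(m)} = \liminf_m L_{y^{(m)}}(p) = L_y(p)$. Third — and this is where strictness enters — apply this with a fixed strictly positive "dominating" polynomial. Concretely, take $q(x) = (c - |x|^2)$-type test polynomials, or more cleanly: since $|x|^{k}$ is dominated on $K$ by a sum $q(x)$ of a constant and even monomials of degree $\le k$ lying in $\mathcal{P}_k$ with $q|_K > 0$, and $L_y(q)$ is a finite number, the bound $\int_K q\,d\mu \le L_y(q) < \infty$ shows $\mu$ has moments up to order $k$. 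Applying the liminf inequality to both $p$ and $-p+(\text{large multiple of }q)$ for each $p$ with $p|_K\ge 0$, and then to $\pm(x^\alpha$-type polynomials after absorbing them into combinations that are $K$-nonnegative$)$, pins down $\int x^\alpha\,d\mu = y_\alpha$ for all $|\alpha|\le k$: one gets $\int p\,d\mu \le L_y(p)$ for every $p$ with $p|_K\ge 0$, hence by linearity $\int p\,d\mu = L_y(p)$ whenever both $p$ and $-p$ vanish-dominated... more carefully, for arbitrary $p\in\mathcal{P}_k$ write $p = (p + Nq) - Nq$ with $N$ large enough that $p+Nq$ is $K$-nonnegative (possible since $q|_K$ is bounded below by a positive constant and $p$ is bounded on any region where $|x|$ is bounded, with the $Nq$ term dominating the growth); then $\int p\,d\mu = \int(p+Nq)\,d\mu - N\int q\,d\mu \le L_y(p+Nq) - N\int q\,d\mu$, and the reverse inequality comes from applying the same to $-p$, once we know $\int q\,d\mu = L_y(q)$ — which itself follows by squeezing $q$ between $K$-nonnegative polynomials.

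The main obstacle, as flagged, is making the "no loss of mass" / moment-convergence step rigorous: vague convergence controls $\int f\,d\mu$ only for compactly supported continuous $f$, so one must upgrade to $\int x^\alpha\,d\mu = y_\alpha$ by a truncation-and-domination argument, and it is precisely strict $K$-positivity (ensuring $L_y(q) < \infty$ is a genuine bound, not merely $\ge 0$, for a dominating $q$, and more importantly that equality is forced rather than strict inequality in the limit) that closes the gap. I would organize this as: (a) uniform moment bound $\sup_m \int |x|^k\,d\mu^{(m)} \le L_y(\text{dominating }q) + o(1) < \infty$, obtained by choosing $q$ with $q - |x|^k \ge 0$ on $K$, $q\in\mathcal{P}_k$; (b) vague-limit $\mu$ exists and is supported in $K$; (c) the liminf inequality $\int p\,d\mu \le L_y(p)$ for all $K$-nonnegative $p\in\mathcal{P}_k$; (d) conclude equality for all $p\in\mathcal{P}_k$ by the $p\mapsto p+Nq$ trick, hence $\mu$ represents $y$. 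Strictness is used to guarantee the dominating functional values are finite and that the squeeze in (d) produces equalities; without it one only recovers the closure statement of Theorem 2.2.
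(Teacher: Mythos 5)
Your approach has a genuine gap at its central step: the vague limit $\mu$ of the approximating measures $\mu^{(m)}$ need not represent $y$, and strict $K$-positivity does not force the equality $\int_K q\,d\mu = L_y(q)$ that your step (d) rests on. Steps (a)--(c) are fine: the dominating polynomial gives $\sup_m\int|x|^k d\mu^{(m)}<\infty$, a vague limit $\mu$ supported in $K$ exists, and $\int_K p\,d\mu\le L_y(p)$ for every $K$-nonnegative $p$. By uniform integrability this even recovers $\int x^\alpha d\mu=y_\alpha$ for $|\alpha|<k$. But at the top degree $|\alpha|=k$ mass can escape to infinity, and nothing in strict positivity prevents it, because Theorem 2.2 gives you no control over \emph{which} approximating sequence you receive. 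Concretely, take $n=1$, $k=2$, $K=\re$, $y=(1,0,2)$, so $M_1(y)\succ0$ and $L_y$ is strictly positive. Let $\nu=\tfrac12(\delta_{-1}+\delta_{1})$ and $\mu^{(m)}=\nu+\tfrac1{m^2}\delta_{m}$. Then $y^{(m)}=(1+\tfrac1{m^2},\tfrac1m,2)\to y$ and each $\mu^{(m)}$ represents $y^{(m)}$, yet $\mu^{(m)}\to\nu$ vaguely and $\nu$ has moments $(1,0,1)\ne y$; with $q(x)=1+x^2$ one has $\int q\,d\nu=2<3=L_y(q)$. So the ``squeeze'' you invoke to get $\int q\,d\mu=L_y(q)$ cannot work — it is exactly the no-mass-loss assertion in disguise, and the defect $L_y-L_\mu$ is merely some $K$-positive functional, which strictness does not force to vanish.

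The paper exploits strictness in an entirely different, convex-geometric way rather than as a tightness/compactness device. Lemma 2.3 shows that strict $K$-positivity is an \emph{open} condition on $y$ (here the determining hypothesis enters, to make the normalized cone $\mc{T}$ of $K$-nonnegative polynomials compact with $L_y>0$ on it, hence bounded below by $2\eps$, stable under perturbations of $y$ of size $<\eps$). Combined with Theorem 2.2, this places $y$ in the interior of the closed convex cone $\overline{\mc{R}_{n,k}(K)}$, and Lemma 2.1 ($\mathrm{int}(\mc{C})=\mathrm{int}(\overline{\mc{C}})$ for convex $\mc{C}$) then yields $y\in\mathrm{int}(\mc{R}_{n,k}(K))\subseteq\mc{R}_{n,k}(K)$. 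If you want to salvage an analytic argument along your lines, you would have to construct a \emph{specific} uniformly tight approximating sequence rather than use an arbitrary one, which is essentially as hard as the theorem itself; the interior-of-a-convex-set argument sidesteps this entirely.
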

\noindent To discuss concrete applications of Theorem 1.3, we consider
 the following property:
\begin{quote}
$(H_{n,d})$ Each $p\in \mathcal{P}_{2d}$
admits a sum-of-squares decomposition,
$p = \sum~ p_{i}^{2}$,
\newline $~~~~~~~~~$
for certain polynomials $p_{i}\in \mathcal{P}_{d}$ (which depend on $p$).
\end{quote}
If $(H_{n,d})$ holds and we set $k=2d$,
then positivity for $L_{y}$ is equivalent to
positivity of $M_{d}(y)$; indeed, in this case,
if $M_d(y)\succeq 0$ and $p\in \mathcal{P}_{2d}$ is nonnegative on $\re^{n}$, then $L_{y}(p)
= \sum L_{y}(p_{i}^{2})
= \sum  \langle M_{d}(y)\hat{p_{i}},\hat{p_{i}}\rangle   \ge 0$.
A well-known theorem of Hilbert (cf. \cite{Rez00,Rez07})
 shows that $(H_{n,d})$ holds
if and only if $n=1$, $n=d=2$, or $n>1$ and $d=1$.
In these cases, whether or not $y$ has a representing measure,
Theorem~\ref{thm:pos=>clos-rep} (cf. Section 2)
implies that if $M_{d}(y)\succeq 0$, then $y$ has
a sequence of approximate representing measures.
For $n=1$,
the truncated moment problem has been solved (cf. \cite{CF96}):
a multisequence $y$ of degree  $2d$ has a representing measure
if and only if $M_{d}(y)$ is positive semidefinite and {\it{recursively
generated}}
 (see below for terminology concerning moment matrices). In the
sequel we address the truncated moment problem in the other cases
covered by Hilbert's theorem.

Consider first the bivariate quartic moment problem $(n=d=2)$.
For the case when $M_{2}(y)$ is singular,
concrete necessary and sufficient conditions
for representing measures are known (cf. \cite{CF02,CF052}):
$y$ has a representing measure if and only if
\begin{equation}
M_{2}(y)\succeq 0,~M_{2}(y)~is
~recursively~ generated,~ and ~
rank~  M_{2}(y) \le card~ \mathcal{V}(M_{2}(y)),
\end{equation}
where $\mathcal{V}(M_{2}(y))$
is the {\it{algebraic variety}}
associated to  $M_{2}(y)$ (see definition~\reff{def:momvar})
and $card$ denotes the cardinality of a set.
When $2$ is replaced by $d$,
the conditions of (1.3) apply
more generally to any bivariate sequence $y$ of degree $2d$
for which $M_2(y)$ is singular, i.e., the first 6 columns
of $M_2(y)$ are dependent
(cf. \cite[Theorem 1.2]{CF052}).
Subsequent to
\cite{CF02}, the case  $M_{2}(y)\succ 0$ has been open (cf. \cite{LL}).
In this case, it is easy to find a moment matrix
 extension $M_{3}(\tilde{y})\succ 0$,
but an example of \cite{CF98}
shows that for such $\tilde{y}$, $L_{\tilde{y}}$
need not be positive, so Theorem 1.1 cannot be applied to yield
a representing measure for $y$.
Instead, in Section 3 we will use Theorem 1.3, together
 with Hilbert's theorem, to establish that
such $y$ does indeed have
a representing measure.
This provides a positive answer to Question 1.2 for
$n = d = 2$, with $K = \re^2$.

Consider next the case of the multivariate quadratic moment problem,
where $n\ge 1$ and $d=1$. For $n=1,~2$, it was shown in \cite{CF96} that
if $M_{1}(y)\succeq 0$, then y has a
$rank~   M_{1}(y)$-atomic
representing measure,
and in Section 4,  Theorem~\ref{thm:deg2mom},
we prove the same result for $n\ge 1$.
In the sequel, let $\mathcal{R}_{n,k}(K) $
denote the convex set of $n$-variable moment sequences of degree $k$
which admit $K$-representing measures, and let
$\overline{\mathcal{R}_{n,k}(K)}$ denote the closure
of  $\mathcal{R}_{n,k}(K) $
in $\re^{\eta}$,
where $\eta = dim~\mc{P}_{k} $.
 Now let $q$ be a quadratic
polynomial, and
define the quadratic variety $E(q)=\{x\in\re^n:q(x)=0\}$
and the quadratic semialgebraic set $S(q):=\{x\in\re^n:q(x)\geq0\}$.
We are interested in determining whether $y$ has a representing measure
supported in $E(q)$ or in $S(q)$.
It is obvious that if $y$ has a representing measure
supported in $E(q)$ (resp., $S(q)$), then
\be \label{2mom:necc}
M_1(y) \succeq 0, \quad L_y(q) = 0 \,\,\, (resp., L_y(q) \geq 0).
\ee
For the case when $S(q)$ is compact,
we will show in Theorem 4.7 that
if $y$ satisfies \reff{2mom:necc},
then $y
\in \mc{R}_{n,2}(E(q))$
(resp., $y \in \mc{R}_{n,2}(S(q))$).
For the general case, we show in
Theorem 4.8 that if (1.4) holds, then $y
\in \overline{\mc{R}_{n,2}(E(q))}$
(resp., $y \in \overline{\mc{R}_{n,2}(S(q))}$).
In Theorem 4.10,
we further show that
if $M_1(y)\succ 0$
and $L_y(q)=0$ (resp., $L_y(q)>0$),
then $y \in \mc{R}_{n,2}(E(q))$ (resp., $y \in\mc{R}_{n,2}(S(q))$);
this result implies an affirmative answer to Question 1.2
for $d=1$ and $K=E(q)$ (resp., $K=S(q)$).

The preceding concrete results all concern the positive cases
of Hilbert's theorem. In some cases where sums-of-squares are not available,
it is still possible to
use a sequence of approximate representing measures to
establish positivity of a functional $L_{y}:\mathcal{P}_{2d}
\longrightarrow \re$. In Example 2.5, for $n=2$, $d=3$, $k=6$, we will
use
this approach to
illustrate a multisequence $y$ of degree 6 such that $L_{y}$ is
positive (whence $M_{3}(y)\succeq 0$),
but $y$ has no representing measure. We believe this is
the first such example in a case where the positivity of $L_{y}$
cannot be established by sums-of-squares, via positivity of
$M_{d}(y)$.

We recall some additional terminology and results
from \cite{CF96,CF05}
concerning moment matrices and representing measures.
Let  $[x]_k$ denote the column vector of all
$n$-variable
monomials up to degree $k$
 in degree-lexicographic order, that is,
\[
[x]_k^T = \bbm 1 & x_1 & \ldots & x_n & x_1^2 & x_1x_2 & \ldots & x_n^k \ebm.
\]
Throughout this paper, the superscript $T$ denotes
the transpose of a matrix or vector.
Note that if $k=2d$ and $\mu$ is a representing measure for $y$, then
\[
M_d(y) =  \int_{\re^2} [x]_d[x]_d^T d \mu(x),
\]
which shows again that $M_{d}(y)\succeq 0$ is a necessary condition
for representing measures.
Moreover, in this case, $card~supp~ \mu \ge rank~M_{d}(y)$ [4]
(where $supp~\mu$ denotes the closed support of $\mu$).
We denote the successive columns of $M_{d}(y)$ by
$$1,~ X_{1},\ldots,~X_{n}, ~ X_1^2,~ X_1X_2,\ldots,~X_{n}^{2},
~\ldots,~X_n^d,\ldots, ~X_n^d.$$ For
$p=\sum_{\af\in \Z_{+}^n: |\af| \leq d}  p_\af x^\af \in \mathcal{P}_{d}$,
we define an element $p(X)$ of the column space of $M_{d}(y)$ by
$$p(X) =    \sum_{\af\in \Z_{+}^n: |\af| \leq d}  p_\af X^\af.$$
 $M_{d}(y)$  is {\it{recursively generated}} if, whenever
 $p\in  \mathcal{P}_{d}$ and $p(X)=0$, then $(pq)(X)=0$
for $q\in \mathcal{P}_{d}$ with $deg~pq\le d$; recursiveness is
a necessary condition for representing measures \cite{CF96}.
The {\it{algebraic
variety}} associated to $M_{d}(y)$ is defined by
\be \label{def:momvar}
\mathcal{V}(M_{d}(y)) := \bigcap_{p\in \mathcal{P}_{d},~p(X)=0}
\{x\in \re^n: p(x) = 0\};
\ee
if $y$ has a representing measure
$\mu$, then $supp~\mu \subseteq   \mathcal{V}(M_{d}(y))$ \cite{CF96}, whence
\be \label{rank<=card}
rank~M_{d}(y)\le card ~    \mathcal{V}(M_{d}(y)).
\ee

Recall that a measure $\nu$ is {\it{$p$-atomic}} if it is of the
form $\nu = \sum_{i=1}^{p} \lambda_{i}\delta_{u_{i}}$, where
$\lambda_{i}>0$ and $\delta_{u_{i}}$ is the unit-mass
measure supported at $u_{i}\in \re^{n}$. For $k=2d$,
a fundamental result of \cite{CF96,CF05} shows that $y$ admits a
$rank~M_{d}(y)$-atomic
representing measure if and only $M_{d}(y)$ is positive semi-definite
and $M_{d}(y)$ admits a {\it{flat}} (i.e., rank-preserving) moment
matrix extension  $M_{d+1}(\tilde{y})$;
in this case $\tilde{y}$ has a unique (and computable)
representing measure,
which is $rank~M_{d}(y)$-atomic, with support precisely
$\mathcal{V}( M_{d+1}(\tilde{y}) )$.
 More generally, $y$ admits
a finitely atomic representing measure if and only if
 $M_{d}(y)$ admits a positive extension $M_{d+m}(\tilde{y})$
 (for some $m\ge 0$), which in turn admits a flat extension
$M_{d+m+1}$ \cite{CF05}. A remarkable result of Bayer and Teichmann \cite{BT}
implies that
a multisequence
$y$ of degree $k$
admits a $K$-representing measure if and only if
$y$ admits a finitely atomic $K$-representing measure $\mu$ (with
$card~supp~\mu \le dim~\mathcal{P}_{k} $), so the preceding
moment matrix criterion provides a complete characterization of the
existence of representing measures when $k=2d$.
This characterization  is
more concrete than the criterion of Theorem 1.1, because it
provides algebraic coordinates for
constructing representing measures, although
precise conditions for flat extensions are presently known
only in special cases.
For the case when $K$ is a closed semialgebraic set,
analogues of the preceding results appear in \cite{CF05}.
The papers \cite{CF02,CF052,F08}
describe various concrete existence theorems for representing
measures based on flat extensions. These results usually assume that
$M_{d}(y)$ is positive semidefinite and {\it{singular}},
so that any representing measure is necessarily supported in
the nontrivial algebraic variety $\mathcal{V}(M_{d}(y))$. By contrast,
for the case when   $M_{d}(y)$ is positive definite,
very few results  are known concerning the existence of representing measures.
Our solutions to the positive definite cases of the bivariate
quartic moment problem
and the multivariate quadratic moment problem
provide two such results.
A notable feature of the proofs of these results
is that they do not rely on flat extension techniques.
For this reason, the results which depend on
Theorem 1.3 (or Lemma~\ref{lem:clos-int}) are purely existential and do
not provide a procedure for explicitly computing
representing measures
(cf. Question~\ref{quest3.5} below).

\bigskip
This paper is organized as follows.
Section~2 contains
an analysis of positivity of Riesz functionals, leading to
a proof of Theorem 1.3.
Section~3 shows that every bivariate quartic moment sequence $y$
with $M_2(y)\succ 0$ admits a representing measure supported in $\re^2$.
Section~4 gives a complete solution of quadratic $K$-moment problems
when
$K = \re^{n}$, or when
$K\equiv S(q)$ or $K\equiv E(q)$
is defined by a quadratic multivariate polynomial $q(x)$.

\section{Positivity, approximation, and representing measures.}
\setcounter{equation}{0}
In this section we will prove Theorem 1.3.
Let
\[
\mc{M}_{n,k} = \left\{
y \equiv (y_\af)_{ \af \in \Z_{+}^n: |\af| \leq k}  \right\},
\]
the set of $n$-variable multisequences of degree $k$, and let
\[
\mc{R}_{n,k}(K) = \left\{
y \in \mc{M}_{n,k} :
y_\af = \int_K x^\af d \mu(x), ~\mu\ge 0,~
\, supp(\mu) \subseteq K
\right\},
\]
the multisequences with $K$-representing measures.
When $K=\re^n$, we  simply write $\mc{R}_{n,k}(\re^n)=\mc{R}_{n,k}$.
Note that $\mc{R}_{n,k}(K)$ is a convex cone in
 $ \mc{M}_{n,k}(K)$  and that
 $\mc{M}_{n,k}$   can be identified with the affine space
$\re^{\eta}$,
where $\eta \equiv dim~\mathcal{P}_{k} =
 \binom{n+k}{k}$.
$\re^\eta $ is equipped with the usual Euclidean norm
$\| \cdot \|$, although we sometimes employ $\|\cdot\|_{1}$ as well.
Note also that for $x\in K$,
the truncated moment sequence $y\equiv [x]_{k}$ is an element of
$\mc{R}_{n,k}(K)$, since $\delta_{x}$ is a $K$-representing measure.
The truncated moment sequence $y$ is said to be in the {\it{interior}} of
$\mc{R}_{n,k}$
if there exists $\eps>0$ such that for any truncated moment sequence
$y^{*}$ having the same degree as $y$,
$y^{*} \in \mc{R}_{n,k}$
whenever $\|y^{*}   - y \| < \eps$.
 Equivalently, the interior of
$\mc{R}_{n,k}$
is defined in the standard way for a subset of the space
$\re^{\eta}$.

Let us begin with a well-known
fact about the interior and closure of convex sets.
% we include a proof for the sake of completeness.
%Its proof is kindly proposed by Igor Klep during our discussion with him.
%
%**************IF THIS RESULT IS ALREADY KNOWN, TRY TO FIND
%A REFERENCE, EVEN IF WE INCLUDE A PROOF. IS THIS PROOF
%RIGOROUS ENOUGH?****************
%

\begin{lem}  \label{lem:clos-int}
If $\mc{C} \subset \re^N$ is a convex set,
then $int(\mc{C}) = int(\overline{\mc{C}})$.
\end{lem}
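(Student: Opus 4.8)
The plan is to prove the two inclusions separately, the trivial one being $int(\mc{C}) \subseteq int(\overline{\mc{C}})$, which is immediate from $\mc{C} \subseteq \overline{\mc{C}}$. If $int(\overline{\mc{C}}) = \emptyset$ there is nothing further to prove, so I may assume $int(\overline{\mc{C}}) \neq \emptyset$ and must establish $int(\overline{\mc{C}}) \subseteq int(\mc{C})$.

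First I would record a standard segment lemma: if $a \in int(\mc{C})$ and $b \in \overline{\mc{C}}$, then every point $z = (1-t)a + tb$ with $0 \le t < 1$ lies in $int(\mc{C})$. To see this, fix $\dt > 0$ with $B(a,\dt) \subseteq \mc{C}$, and approximate $b$ by $b' \in \mc{C}$ with $\|b'-b\|$ as small as we wish. Convexity gives $(1-t)B(a,\dt) + tb' = B\bigl((1-t)a + tb',\,(1-t)\dt\bigr) \subseteq \mc{C}$, since every point of the left-hand set is a convex combination of a point of $B(a,\dt) \subseteq \mc{C}$ and $b' \in \mc{C}$. As $\|(1-t)a + tb' - z\| = t\|b'-b\|$ can be made smaller than $(1-t)\dt/2$, we obtain a ball about $z$ contained in $\mc{C}$, so $z \in int(\mc{C})$.

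Next I would show $int(\mc{C}) \neq \emptyset$. Pick $p \in int(\overline{\mc{C}})$ and $r > 0$ with $B(p,r) \subseteq \overline{\mc{C}}$; then the $N+1$ points $p,\ p + \tfrac{r}{2}e_1,\ \dots,\ p + \tfrac{r}{2}e_N$ are affinely independent and lie in $\overline{\mc{C}}$. Since affine independence of $N+1$ points is an open condition and each of these points is a limit of points of $\mc{C}$, we may perturb them slightly to obtain affinely independent points $v_0,\dots,v_N \in \mc{C}$. Their convex hull is an $N$-simplex, contained in $\mc{C}$ by convexity, and every $N$-simplex in $\re^N$ has nonempty interior; hence $int(\mc{C}) \neq \emptyset$. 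Now fix $a \in int(\mc{C})$ and let $x \in int(\overline{\mc{C}})$. If $x = a$ we are done; otherwise, because $x$ is interior to $\overline{\mc{C}}$, a short extension of the segment from $a$ through $x$ past $x$ remains in $\overline{\mc{C}}$, so there exist $b \in \overline{\mc{C}}$ and $t \in (0,1)$ with $x = (1-t)a + tb$. The segment lemma then yields $x \in int(\mc{C})$, which completes the proof.

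The main obstacle is the passage from $int(\overline{\mc{C}}) \neq \emptyset$ to $int(\mc{C}) \neq \emptyset$: merely approximating an interior point of $\overline{\mc{C}}$ by points of $\mc{C}$ does not directly produce an interior point of $\mc{C}$, and one really does need the affine-independence/simplex argument above. Once $\mc{C}$ is known to have nonempty interior, everything else reduces routinely to the segment lemma.
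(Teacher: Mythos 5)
Your proof is correct. Note that the paper does not actually prove this lemma: in the text as published, the statement is justified only by a citation to Berberian (Theorem~25.20(iii) there), which gives the result for convex sets in arbitrary topological vector spaces. Your argument is the standard self-contained ``line segment principle'' proof from finite-dimensional convex analysis, and all three of its steps check out: (i) the half-open segment from a point of $int(\mc{C})$ to a point of $\overline{\mc{C}}$ lies in $int(\mc{C})$ (your ball computation, including the factor $(1-t)\dt/2$ absorbing the perturbation $t\|b'-b\|$, is right); (ii) $int(\overline{\mc{C}})\ne\emptyset$ forces $int(\mc{C})\ne\emptyset$, via $N+1$ affinely independent points of $\overline{\mc{C}}$ perturbed into $\mc{C}$ so that their convex hull is a full-dimensional simplex inside $\mc{C}$; and (iii) any $x\in int(\overline{\mc{C}})$ can be written as an interior point of a segment from $a\in int(\mc{C})$ to some $b\in\overline{\mc{C}}$ obtained by pushing slightly past $x$. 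You correctly identify (ii) as the only delicate point, and the affine-independence/simplex device is exactly what is needed there; merely approximating an interior point of $\overline{\mc{C}}$ by points of $\mc{C}$ would not do. Compared with the paper's citation, your route is more elementary and fully self-contained but is confined to $\re^N$ (which is all the paper uses), whereas the cited theorem buys generality at the cost of external machinery; a draft proof commented out of the paper's source sketches a closely related perturbation argument using a hypercube with vertices in $\overline{\mc{C}}$, of which your version is the more carefully quantified form.
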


\noindent
The above lemma is a consequence of Theorem~25.20 (iii) of Berberian \cite{Ber},
which actually applies to convex sets in general topological vector spaces.
%Actually, it was also shown in \cite{Ber} that Lemma~\ref{lem:clos-int}
%remains true if $\re^N$ is replaced by general topological vector spaces.
%which does not have to be finite dimensional.

%***********NEW VERSION OF THM 2.2, RELATING IT IN SOME WAY
%TO THE BAYER-TEICHMANN RESULT, AND REFINING IT A LITTLE***********

In the sequel, let  $\mc{F}_{n,k}(K)$ denote the moment sequences
   $y \in \mc{M}_{n,k}$   having finitely atomic $K$-representing measures.
$\mc{F}_{n,k}(K)$ is clearly a convex subset of  $\mc{R}_{n,k}(K)$,
and the Bayer-Teichmann theorem
\cite[Theorem 2]{BT} \cite[Theorem 5.8]{Lau}
shows that $\mc{F}_{n,k}(K)= \mc{R}_{n,k}(K)$.
The following result, which is implicit in the proof of
\cite[Theorem 2.4]{CF09},
is the basis for our approximation approach to $K$-positivity
for Riesz functionals.

\begin{thm} \label{thm:pos=>clos-rep}
For $y \in \mc{M}_{n,k}$,  the following are equivalent:
\newline \indent i) $L_y$ is $K$-positive;
\newline \indent ii) $y \in \overline{\mc{F}_{n,k}(K)}$.
\newline \indent iii) $y \in \overline{\mc{R}_{n,k}(K)}$.
\end{thm}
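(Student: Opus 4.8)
The plan is to prove the chain of implications $(iii) \Rightarrow (i) \Rightarrow (ii) \Rightarrow (iii)$, after noting that $(ii) \Leftrightarrow (iii)$ is immediate from the Bayer--Teichmann identity $\mc{F}_{n,k}(K) = \mc{R}_{n,k}(K)$, which forces $\overline{\mc{F}_{n,k}(K)} = \overline{\mc{R}_{n,k}(K)}$; so really only $(i) \Leftrightarrow (iii)$ needs work.

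First I would establish $(iii) \Rightarrow (i)$. Suppose $y = \lim_{m\to\infty} y^{(m)}$ with each $y^{(m)} \in \mc{R}_{n,k}(K)$, say with $K$-representing measure $\mu^{(m)}$. Fix $p \in \mc{P}_k$ with $p|_K \ge 0$. Then $L_{y^{(m)}}(p) = \int_K p \, d\mu^{(m)} \ge 0$ for every $m$. The key point is that the Riesz functional depends continuously (indeed linearly) on the moment data: writing $p = \sum_\af p_\af x^\af$, we have $L_{y^{(m)}}(p) = \sum_\af p_\af y^{(m)}_\af \to \sum_\af p_\af y_\af = L_y(p)$. Hence $L_y(p) \ge 0$, so $L_y$ is $K$-positive.

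Next, $(i) \Rightarrow (iii)$, which is the substantive direction and the main obstacle. The natural route is a Hahn--Banach / separation argument in $\re^\eta$, where $\eta = \dim \mc{P}_k$. Suppose, for contradiction, that $L_y$ is $K$-positive but $y \notin \overline{\mc{R}_{n,k}(K)}$. Since $\mc{R}_{n,k}(K)$ is a convex cone (and hence its closure is a closed convex cone), by the separating hyperplane theorem there is a vector, which we identify with the coefficient vector $\hat{p}$ of some $p \in \mc{P}_k$, and a scalar separating $y$ from $\overline{\mc{R}_{n,k}(K)}$; because the set is a cone one can arrange $\langle \hat p, z\rangle \ge 0$ for all $z \in \mc{R}_{n,k}(K)$ while $\langle \hat p, y\rangle < 0$, i.e. $L_y(p) < 0$. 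Now for each $x \in K$ the sequence $[x]_k$ lies in $\mc{R}_{n,k}(K)$ (its representing measure is $\delta_x$), so $\langle \hat p, [x]_k\rangle = p(x) \ge 0$ for all $x \in K$; that is, $p|_K \ge 0$. But then $K$-positivity of $L_y$ gives $L_y(p) \ge 0$, contradicting $L_y(p) < 0$. Hence $y \in \overline{\mc{R}_{n,k}(K)}$.

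The one technical subtlety — and where I would be most careful — is the passage from a generic separating functional for a convex set to a functional that is nonnegative on the whole cone (the homogeneity step), and the fact that $\mc{R}_{n,k}(K)$ genuinely contains all the point-evaluations $[x]_k$, which is what converts "nonnegative on the cone" into the pointwise condition "$p \ge 0$ on $K$"; these are both routine but must be stated. An alternative, perhaps cleaner, presentation avoids separation entirely: by Theorem~\ref{thm:pos=>clos-rep}'s own remark this result is implicit in \cite[Theorem 2.4]{CF09}, and one could instead invoke a duality description of $\overline{\mc{R}_{n,k}(K)}$ as the bidual cone $(\mc{R}_{n,k}(K))^{\vee\vee}$ and identify $(\mc{R}_{n,k}(K))^\vee$ with the cone of polynomials nonnegative on $K$ — but the self-contained separation argument above is the one I would write out.
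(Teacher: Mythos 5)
Your proposal is correct and follows essentially the same route as the paper: the easy direction uses that $K$-positivity is preserved under limits of moment sequences, and the substantive direction $(i)\Rightarrow(ii)/(iii)$ is the same Minkowski separation argument, using the point evaluations $[x]_k\in\mc{R}_{n,k}(K)$ (with measure $\delta_x$) to convert the separating vector into a polynomial nonnegative on $K$. The only cosmetic difference is that the paper separates $y$ from $\overline{\mc{F}_{n,k}(K)}$ directly (getting $(i)\Rightarrow(ii)$) and notes $(ii)\Rightarrow(iii)$ is trivial, whereas you invoke Bayer--Teichmann to identify the two closures first; both are fine.
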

\begin{proof}
We begin with $iii) \Longrightarrow i)$.
If $y\in  \mc{R}_{n,k}(K)$, with $K$-representing measure $\mu$,
then $L_{y}$ is $K$-positive; indeed, if $p\in \mathcal{P}_{k}$ and
$p|K\ge 0$, then $L_{y}(p) = \int_{K} pd\mu \ge 0$.
Since the $K$-positive linear functionals form a closed
positive cone in the dual space $\mathcal{P}_{k}^{*}$
(equipped with the usual norm topology),
it follows that if $y\in  \overline{\mc{R}_{n,k}(K)}$, then $L_{y}$ is
$K$-positive.

Since $ii) \Longrightarrow iii)$ is clear, it suffices to show
$i) \Longrightarrow ii)$, which we  prove by  contradiction.
Suppose
$L_{y}$ is $K$-positive, but
 $y \not\in \overline{\mc{F}_{n,k}(K)}$.
Since $\overline{\mc{F}_{n,k}(K)}$ is a closed convex cone in $\re^{\eta}$,
it follows from the
Minkowski separation theorem  \cite[(34.2)]{Ber}
that
there exists a nonzero vector $p\in \re^{\eta}$ such that
\[
p^T y < 0, \quad \text{and} \quad
p^T w \ge 0, \, \forall \, w\in \overline{\mc{F}_{n,k}(K)}.
\]
Now define the nonzero polynomial $\tilde{p}$
in $\mathcal{P}_{k}$ by
\[
\tilde{p}(x) = p^T [x]_k.
\]
Since,
for each $x\in K$,
 the monomial vector $[x]_k$ is an element of
$ \mc{F}_{n,k}$
 (with $K$-representing measure $\delta_{x}$),
 then
$\tilde{p}(x)$ is nonnegative on $K$.
However, we have
\[
L_y(\tilde{p}) = p^T y < 0,
\]
which contradicts the $K$-positivity of $L_y$.
Therefore, we must have
$y \in \overline{\mc{F}_{n,k}(K)}$.
\end{proof}

\begin{lem}  \label{lem:pos-int}
Let $K$ be a determining set of degree $k$ and let
 $y \in \mc{M}_{n,k}$.
If the Riesz functional $L_y$ is strictly $K$-positive,
then there exists $\eps>0$ such that
$L_{\tilde{y}}$ is also strictly $K$-positive
whenever $\|\tilde{y} - y\|_1 < \eps$.
\end{lem}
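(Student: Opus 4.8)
The plan is to realize strict $K$-positivity as a condition that is stable under small $\|\cdot\|_1$-perturbations, by a compactness argument on the normalized set of polynomials nonnegative on $K$. Concretely, I would work with
\[
\Gamma \ :=\ \bigl\{\, p\in\mathcal{P}_k \ :\ p|_K\ge 0,\ \ \|\hat p\|_1 = 1 \,\bigr\},
\]
where $\hat p\in\re^{\eta}$ denotes the coefficient vector of $p$. Identifying $\mathcal{P}_k$ with $\re^{\eta}$ via $p\mapsto\hat p$, the set $\Gamma$ lies in the $\|\cdot\|_1$-unit sphere and so is bounded, and it is closed: if $\hat p_j\to\hat p$ then $p_j(x)\to p(x)$ for every $x$, so each inequality $p_j(x)\ge 0$, $x\in K$, passes to the limit, while $\|\cdot\|_1$ is continuous. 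Hence $\Gamma$ is compact; it is nonempty, since the constant polynomial $1$ belongs to it.

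Next I would invoke the hypothesis that $K$ is a determining set of degree $k$: every nonzero $p\in\mathcal{P}_k$ has $p|_K\not\equiv 0$, so every $p\in\Gamma$ satisfies both $p|_K\ge 0$ and $p|_K\not\equiv 0$. Strict $K$-positivity of $L_y$ then yields $L_y(p)>0$ for all $p\in\Gamma$. Since $p\mapsto L_y(p)=\hat p^{T}y$ is linear, hence continuous, and $\Gamma$ is compact and nonempty, the quantity $m:=\min_{p\in\Gamma}L_y(p)$ is attained and satisfies $m>0$.

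Finally I would set $\eps:=m/2$ and check stability. If $\|\tilde y-y\|_1<\eps$, then for every $p\in\Gamma$, by H\"older's inequality,
\[
|L_{\tilde y}(p)-L_y(p)| \ =\ |\hat p^{T}(\tilde y-y)| \ \le\ \|\hat p\|_\infty\,\|\tilde y-y\|_1 \ \le\ \|\hat p\|_1\,\|\tilde y-y\|_1 \ <\ \eps,
\]
so $L_{\tilde y}(p)>m-\eps=m/2>0$ on $\Gamma$. Removing the normalization by homogeneity: any $p\in\mathcal{P}_k$ with $p|_K\ge 0$ is either $p\equiv 0$, giving $L_{\tilde y}(p)=0\ge 0$, or nonzero, in which case $p/\|\hat p\|_1\in\Gamma$ and $L_{\tilde y}(p)=\|\hat p\|_1\,L_{\tilde y}\!\bigl(p/\|\hat p\|_1\bigr)>0$. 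Thus $L_{\tilde y}$ is $K$-positive; and it is strictly $K$-positive, since $K$ being determining makes $p|_K\not\equiv 0$ force $p\not\equiv 0$, to which the previous sentence applies.

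I do not expect a genuine obstacle: the argument is essentially just the compactness of $\Gamma$ combined with the correct use of the determining hypothesis. The two points needing care are (i) verifying that $\Gamma$ is closed — i.e., that nonnegativity on $K$ survives coefficient-wise limits, which is immediate from pointwise convergence of the associated polynomials — and (ii) using ``determining'' twice, first to guarantee $m>0$ rather than only $m\ge 0$, and then at the end to recover the \emph{strict} conclusion rather than mere $K$-positivity of $L_{\tilde y}$.
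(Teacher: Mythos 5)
Your proof is correct and follows essentially the same route as the paper: compactness of a normalized set of polynomials nonnegative on $K$, the determining hypothesis to make the minimum of $L_y$ there strictly positive, and a H\"older bound $|L_{\tilde y}(p)-L_y(p)|\le \|\tilde y-y\|_1$ to transfer strict positivity to nearby $\tilde y$. The only difference is that you normalize by $\|\hat p\|_1=1$ where the paper uses $\max_\alpha|p_\alpha|=1$, which is immaterial.
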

\begin{proof}
%*******I ADDED A LITTLE MORE DETAIL HERE. PLEASE DOUBLE-CHECK THIS***********
We equip $\mathcal{P}_{k}$ with the norm
$$\| p \| = \max_{\af} |p_{\af}|~~~~
  ( p\equiv \sum_{\af\in \Z_{+}^n: |\af| \leq k}  p_\af x^\af \in \mc{P}_{k}).$$
A sequence $\{p^{(i)}\}$ in $\mathcal{P}_{k}$ that is norm-convergent to
$p\in \mathcal{P}_{k} $          is also
pointwise convergent,  so if $p^{(i)}|K\ge 0$ for each $i$, then $p|K\ge 0$.
It follows that the set
\[
\mc{T} := \left\{
p \in \mc{P}_{k}:
p|_K \geq 0, \| p \| =1
\right\}
\]
is compact.
Note that since $K$ is a determining set,
if $p\in \mc{T}$, then $p|K \not \equiv 0$.
Thus,
 $L_{\tilde{y}}$ is strictly $K$-positive if and only if
$L_{\tilde{y}} (p) >0$ for every $ p\in \mc{T}$.
Since
$\mc{T}$ is compact and
$L_y:\mathcal{P}_{k} \longrightarrow \re$
is a norm-continuous functional on $\mc{T}$,
there exists $\eps >0$ such that
\[
 L_y(p) \geq  2\eps, \quad \forall \, p\in \mc{T}.
\]
For any $p \in \mc{T}$, we have
\[
| L_y(p) - L_{\tilde{y}}(p)| \leq \|y - \tilde{y} \|_1.
\]
So, if $\|y - \tilde{y} \|_1 < \eps$, then
\[
 L_{\tilde{y}} (p) \geq  L_{y} (p) - \|y - \tilde{y} \|_1 \geq \eps >0,
 \quad \forall \, p\in \mc{T},
\]
whence $L_{\tilde{y}}$ is strictly positive.
Thus, the lemma is proved.
\end{proof}

We now prove Theorem 1.3, which  we can restate as follows for convenience.
\begin{thm}  \label{thm:strpos=>rep}
Let $K$ be a determining set of degree $k$.
If $y \in \mc{M}_{n,k}$ and $L_y$ is strictly $K$-positive,
then $y \in \mc{R}_{n,k}(K)$.
\end{thm}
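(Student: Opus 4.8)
The plan is to combine the two structural facts already at hand: Theorem~\ref{thm:pos=>clos-rep}, which gives $\overline{\mc{R}_{n,k}(K)} = \{y : L_y \text{ is } K\text{-positive}\}$, and Lemma~\ref{lem:pos-int}, which says that strict $K$-positivity is an \emph{open} condition on $y$. The key observation is that if $L_y$ is strictly $K$-positive then, by Lemma~\ref{lem:pos-int}, there is an $\eps$-ball around $y$ (in the $\|\cdot\|_1$-norm, equivalently in the Euclidean norm since all norms on $\re^\eta$ are equivalent) consisting entirely of sequences $\tilde y$ whose Riesz functional is strictly $K$-positive, hence a fortiori $K$-positive. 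By Theorem~\ref{thm:pos=>clos-rep}, every such $\tilde y$ lies in $\overline{\mc{R}_{n,k}(K)}$. Therefore this whole ball lies in $\overline{\mc{R}_{n,k}(K)}$, which shows that $y$ is an interior point of $\overline{\mc{R}_{n,k}(K)}$.

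Now invoke convexity. The set $\mc{R}_{n,k}(K)$ is a convex cone (it is closed under addition of measures and under scaling), so Lemma~\ref{lem:clos-int} applies with $\mc{C} = \mc{R}_{n,k}(K)$ and $N = \eta$: the interior of $\mc{R}_{n,k}(K)$ coincides with the interior of its closure. Since we have just shown $y \in int\big(\overline{\mc{R}_{n,k}(K)}\big)$, it follows that $y \in int\big(\mc{R}_{n,k}(K)\big) \subseteq \mc{R}_{n,k}(K)$, i.e. $y$ admits a $K$-representing measure. This completes the argument.

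The only genuine subtlety is making sure the hypotheses of Lemma~\ref{lem:clos-int} and of Theorem~\ref{thm:pos=>clos-rep} are both legitimately in force — in particular that "$K$ determining" is used exactly where Lemma~\ref{lem:pos-int} needs it (to guarantee that a nonzero $p \in \mc{P}_k$ nonnegative on $K$ is not identically zero on $K$, so that strict positivity is equivalent to strict positivity on the compact slice $\mc{T}$), while Theorem~\ref{thm:pos=>clos-rep} itself requires no determinacy assumption. I would also remark, for completeness, that $int\big(\mc{R}_{n,k}(K)\big)$ is nonempty here precisely because the hypothesis produces a point in it — so there is no degenerate case to worry about. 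There is essentially no computation; the work is entirely in correctly chaining the three cited results, and the main ``obstacle'' is really just bookkeeping: recognizing that strict $K$-positivity buys openness, that openness inside the closure plus convexity buys membership, and that no explicit construction of the measure is needed (consistent with the paper's remark that these existence results are non-constructive).
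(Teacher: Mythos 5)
Your proposal is correct and follows essentially the same route as the paper's own proof: Theorem~\ref{thm:pos=>clos-rep} places the ball of strictly $K$-positive perturbations (from Lemma~\ref{lem:pos-int}) inside $\overline{\mc{R}_{n,k}(K)}$, so $y$ is interior to the closure, and Lemma~\ref{lem:clos-int} then transfers this to $\mc{R}_{n,k}(K)$ itself. Your elaboration of why $y$ lands in the interior of the closure, and your remark on where the determining-set hypothesis is actually used, are accurate but add nothing beyond the paper's argument.
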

\begin{proof}
By Theorem~\ref{thm:pos=>clos-rep},
we have $y \in \overline{\mc{R}_{n,k}(K)}$.
Lemma~\ref{lem:pos-int} implies that $y$
lies in the interior of $\overline{\mc{R}_{n,k}(K)}$.
Lemma~\ref{lem:clos-int} tells us that
$\overline{\mc{R}_{n,k}(K)}$ and $\mc{R}_{n,k}(K)$
have the same interior.
Therefore we must have
$y \in int(\mc{R}_{n,k}(K)) \subset \mc{R}_{n,k}(K)$.
\end{proof}

Although we believe that the hypothesis that $K$ is a determining
set cannot be omitted from Theorem~\ref{thm:strpos=>rep},
at present we do not have an example
illustrating this.
We next present an example which shows
how a sequence of approximate representing measures
  can be used to establish positivity
of a functional $L_{y}:\mathcal{P}_{2d} \longrightarrow \re$
in a case where $y$ has no representing measure
and the positivity of $L_{y}$ cannot be derived from the
positivity of $M_{d}(y)$ via sums-of-squares arguments.
Let $n=2$ and consider the bivariate moment matrix
$M_{d}(y)$.
Denote the rows and columns by
\[
1,~X_{1},~X_{2},~X_{1}^{2},~X_{1}X_{2},~X_{2}^{2},
\ldots, ~X_{1}^{d},~X_{1}^{d-1}X_{2},~\ldots,X_{1}X_{2}^{d-1},~X_{2}^{d};
\]
then $y_{ij}$ is precisely the entry in row $X_{1}^{i}$, column
$X_{2}^{j}$, the moment corresponding to the monomial $x_{1}^{i}x_{2}^{j}$.

\begin{example} Let $n=2$ and $d=3$. We consider
the general form of a moment matrix $M_{3}(y)$ with a
column relation $X_{2}=X_{1}^{3}$ (normalized with $y_{00} = 1$):
\begin{equation} \label{def:M2.5}
M\equiv M_3(y) =
\bbm
1 & a & b & c & e & d   & b & f & g & x      \\
a & c & e & b & f & g   & e & d & h & j      \\
b & e & d & f & g & x   & d & h & j & k      \\
c & b & f & e & d & h  & f & g & x & u      \\
e & f & g & d & h & j & g & x & u & v      \\
d & g & x & h & j & k & x & u & v & w      \\
b & e & d & f & g & x   & d & h & j & k      \\
f & d & h & g & x & u  & h & j & k & r      \\
g & h & j & x & u & v   & j & k & r & s      \\
x & j & k & u & v & w  & k & r & s & t
\ebm .
\end{equation}
For suitable values of the moment data,
$M$ satisfies the following properties:
\begin{equation} \label{M-rank=9}
 M\succeq 0, \quad X_{2}= X_{1}^{3}, \quad rank~M = 9;
 \end{equation}
 this is
the case, for example, with
\be \label{abfg...rstx}
\baray{c}
a=b=f=g=u=v=w=0,~ c=1,~ e=2,~ d=5,
~h=14, \\
j=42, ~k=132,
~r=429,~ s=1442 ,~ t=4798 ,~ x=0.
\earay
\ee

In \cite{F08} we solved the truncated $K$-moment problem
for $K = \{(x_{1},x_{2})\in \mathbb{R}^{2}: x_{2} = x_{1}^{3}\}$.
In particular,
\cite{F08} provides a numerical test, that we next describe,
 for the existence
of $K$-representing measures whenever $M$ as in \reff{def:M2.5}
satisfies
\reff{M-rank=9}.
From \cite{BT} we know that if $M$ admits a representing measure,
then $M$ admits a finitely atomic measure, and thus $M$ admits a
positive, recursively generated extension $M_{4}(\tilde{y})$.
In any such extension, the moments must be consistent with the
relation $x_{2} = x_{1}^{3}$, so in particular, we must have
$y_{44} = y_{15} (\equiv s)$.
To insure positivity of $M_{4}(\tilde{y})$,
we require a lower bound for the diagonal element
$y_{44}$, which we may derive as in \cite{F08}.
 Let $J$ denote the compression of $M$
obtained  by deleting row $X_{1}^{3}$ and column $X_{1}^{3}$;
thus, $J\succ 0$.
Let us write
$$ J = \bbm N & U \\
            U^{T} & \Delta
\ebm, $$
where $N$ is the compression of $J$ to its first 8 rows and columns,
$U$ is a column vector, and $\Delta \equiv y_{06} (\equiv t) > 0 $.
Consider the corresponding
block decomposition
of $J^{-1}$, which is of the form
$$ J^{-1} = \bbm P & V \\
            V^{T} & \epsilon
\ebm, $$
where $P\succ 0$ and $\epsilon > 0$. In
extension $M_{4}(\tilde{y})$, we have $X_{1}^{4} = X_{1}X_{2}$ and
$X_{1}^{3}Y_{2} = Y_{2}^{2}$, so by moment matrix structure,
after deleting the element in row $X_{1}^{3}$,
the first 8 remaining
elements of column $X_{1}^{2}X_{2}^{2}$
 must be $W\equiv
(h,~ x, ~u, ~j, ~k, ~r,  ~v, ~w)^{T}$.
Let $\omega = \langle PW,W \rangle$ and define
\begin{equation}
    \psi(y) := \frac{\omega \epsilon - \langle V,W \rangle^{2}}{\epsilon}.
\end{equation}
In \cite{F08} we showed that in  $M_{4}(\tilde{y})$ we must have
$y_{44}\ge \psi(y)$, and \cite[Theorem 2.4]{F08} implies that
%for $M$
%defined as in \reff{def:M2.5}, i.e.,
%\begin{equation} \label{M-rank=9}
% M\succeq 0, \quad X_{2}= X_{1}^{3}, \quad rank~M = 9,
% \end{equation}
$M$ has a representing measure if and only
$y_{15} \equiv s > \psi(y)$.

A calculation shows that for $M$ as in \reff{def:M2.5}
and satisfiying \reff{M-rank=9},
with appropriate values of the moment data we can
 also have
$\psi(y)$  {\it {independent}} of $s$ and $t$. This is the case, for example,
if we modify \reff{abfg...rstx} so that $x=\frac{1}{10}$, $r=600$,
 $s$ is arbitrary and $t$ is chosen sufficiently large so as
to preserve positivity and the property $rank~M_{3}(y) =9$.
More generally, this is the case
if we modify \reff{abfg...rstx} so that
 $x,k,u,v,w,r,s,t$
are chosen, successively, to maintain positivity and the
$rank~M = 9$ property.
(We conjecture that whenever $M_{3}(y)$ satisfies \reff{M-rank=9},
 then $\psi(y)$ is independent of
$s$ and $t$.)
 For any such $M$, with $\psi(y)$ independent of $s$  and $t$,
we now specify $s\equiv y_{1,5} = \psi(y)$ and we
adjust $t$ (if necessary)
so that $M$ continues to be positive with $rank~M = 9$.
(For a specific example, we may modify \reff{abfg...rstx}
so that $x= \frac{1}{10}$, $r=600$,
$s \equiv \psi(y) = \frac{526337068574699}{741609900} \approx 709722$, and
$t\ge
11319100143$ (cf. \cite[Example 3.2]{F08}.)

We claim that $L_{y}$ is $K$-positive for
$K = \{(x_1, x_2)\in \mathbb{R}^{2}: x_2 = x_1^3\}$, and thus positive.
Since  $ y_{1,5} = \psi(y)$,
positivity for $L_{y}$ cannot be
derived from the existence of a representing measure, since
\cite[Theorem 2.4]{F08}
shows that $y$ has no representing measure. Moreover,
positivity for $L_{y}$ cannot be established from the positivity
of $M$
via sums-of-squares arguments
because, by Hilbert's theorem, there exist
degree 6 bivariate polynomials
that are everywhere nonnegative but are not sums of squares.
To prove that $L_{y}$ is $K$-positive, we employ
a  sequence of approximate representing measures.
Since $J\succ 0$, $t\equiv  \Delta > U^{T}N^{-1}U$. Thus, there
exists $\delta > 0$ such that if
we replace $s$ ($= \psi(y)$) by $s+\frac{1}{m}$  (with $\frac{1}{m}< \delta$),
then the resulting moment matrix, $M_{3}(y^{(m)})$, remains
positive, with $rank~ M_{3}(y^{(m)}) = 9$ and $X_{2} = X_{1}^{3}$. Since
$\psi(y^{(m)})$ is independent of $y_{15}[y^{(m)}]$ and
$y_{06}[y^{(m)}]$, we have $\psi(y^{(m)}) = \psi(y) = s <
    s+\frac{1}{m} =               y_{15}[y^{(m)}]$.
It now follows from \cite[Theorem 2.4]{F08} that $y^{(m)}$
has a $K$-representing measure,
whence $L_{y^{(m)}}$ is $K$-positive. Since $\| y^{(m)} - y \| = \frac{1}{m}
\longrightarrow 0$, we conclude that $L_{y}$ is $K$-positive, and
thus positive.
\qed
\end{example}

\begin{remark}
We have previously noted an example of \cite[Section~4]{CF98}
(based on a construction of Schm\"{u}dgen \cite{Sch})
which illustrates
a case where, with $n=2$, $M_{3}(y)\succ 0$ but $L_{y}$ is not positive.
Example 2.5 shows that if $M_{3}(y)\succeq 0$ and $L_{y}$ is positive,
$y$ need not have a representing measure. Whether this can happen
with $M_{3}(y)\succ 0$ is the content of Question 1.2.
\end{remark}

Now we introduce a variety associated to
$L_{y}$ that provides a finer tool than $\mathcal{V}(M_{d}(y))$
for studying issues related to Question 1.2. For a moment sequence $y$
of degree $2d$, we define the variety of $L_{y}$ by
$$     V(L_{y}) := \bigcap_{p\in \mathcal{P}_{2d},~
            p|\mathcal{V}(M_{d}(y))\ge 0, ~ L_{y}(p)=0}  \mathcal{Z}(p).$$
\begin{prop} \label{sup(mu)<=V(Ly)}
If $y$ has a representing measure $\mu$, then $supp~\mu \subseteq V(L_{y})$.
\end{prop}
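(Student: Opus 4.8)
The plan is to combine two ingredients: first, the fact (due to \cite{CF96}, recalled above) that any representing measure $\mu$ for a degree-$2d$ sequence $y$ satisfies $\mathrm{supp}\,\mu \subseteq \mathcal{V}(M_d(y))$; and second, the elementary observation that a nonnegative continuous function whose integral against $\mu$ vanishes must vanish identically on $\mathrm{supp}\,\mu$. Since $V(L_y)$ is, by definition, the intersection of the zero sets $\mathcal{Z}(p)$ over all $p \in \mathcal{P}_{2d}$ with $p|\mathcal{V}(M_d(y)) \ge 0$ and $L_y(p) = 0$, it suffices to show that $\mathrm{supp}\,\mu \subseteq \mathcal{Z}(p)$ for each such $p$, and then intersect.

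So I would fix an arbitrary $p \in \mathcal{P}_{2d}$ with $p|\mathcal{V}(M_d(y)) \ge 0$ and $L_y(p) = 0$. Because $\mathrm{supp}\,\mu \subseteq \mathcal{V}(M_d(y))$, the polynomial $p$ is nonnegative at every point of $\mathrm{supp}\,\mu$. On the other hand, since $\mu$ is a representing measure for the degree-$2d$ sequence $y$ and $\deg p \le 2d$, all the relevant moments exist and
\[
\int p \, d\mu = \sum_{\af \in \Z_{+}^n:\, |\af| \le 2d} p_\af y_\af = L_y(p) = 0 .
\]

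Next I would argue that the set $\{x \in \mathrm{supp}\,\mu : p(x) > 0\}$, which is relatively open in $\mathrm{supp}\,\mu$ by continuity of $p$, must be empty: were it nonempty, it would have strictly positive $\mu$-measure by the definition of the closed support, and combined with $p \ge 0$ on $\mathrm{supp}\,\mu$ this would force $\int p \, d\mu > 0$, a contradiction. Hence $p \equiv 0$ on $\mathrm{supp}\,\mu$, i.e.\ $\mathrm{supp}\,\mu \subseteq \mathcal{Z}(p)$. Taking the intersection over all admissible $p$ gives $\mathrm{supp}\,\mu \subseteq V(L_y)$.

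There is no substantive obstacle here; the only two points requiring (minor) care are the identity $\int p \, d\mu = L_y(p)$, which uses that $\mu$ matches every moment of $y$ of degree at most $2d$ and that $\deg p \le 2d$, and the passage from "$p = 0$ $\mu$-a.e." to "$p = 0$ on all of $\mathrm{supp}\,\mu$", which is precisely the defining property of the closed support together with the continuity of $p$.
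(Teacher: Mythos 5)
Your proof is correct and is essentially the argument the paper gives: both rest on $supp~\mu \subseteq \mathcal{V}(M_d(y))$ (so that the admissible $p$ are nonnegative on $supp~\mu$), the identity $L_y(p)=\int p\,d\mu=0$, and the fact that a continuous nonnegative function with vanishing integral must vanish on the closed support. The only cosmetic difference is that the paper phrases this as a contradiction at a single point $u$ with $p(u)>0$, whereas you spell out the underlying measure-theoretic step (a relatively open subset of the support has positive measure) directly.
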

\begin{proof}
Suppose there exists $u\in supp~\mu$ such that $u\not \in V(L_{y})$.
Then there exists some $p\in \mathcal{P}_{2d}$, such that
$ p|\mathcal{V}(M_{d}(y))\ge 0$ and $L_{y}(p)=0 $, but
$p(u) \ne 0$. Since $supp~\mu \subseteq  \mathcal{V}(M_{d}(y))$,
we have $p|supp~\mu \ge 0$, and hence $p(u)>0$.
Thus, it follows that $L_{y}(p) = \int_{supp~\mu} p(t)d\mu(t) > 0$,
which contradicts $L_{y}(p)=0$.
\end{proof}

\begin{prop}
For each truncated moment sequence $y$,
$V(L_{y}) \subseteq  \mathcal{V}(M_{d}(y)).$
\end{prop}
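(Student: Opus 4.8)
The plan is to prove the inclusion pointwise, by contraposition: I will show that if $x \in \re^n$ satisfies $x \notin \mathcal{V}(M_d(y))$, then $x \notin V(L_y)$. So fix such an $x$. By the definition \reff{def:momvar} of the algebraic variety, the failure of $x$ to lie in $\mathcal{V}(M_d(y))$ means there exists $p \in \mathcal{P}_d$ with $p(X) = 0$ in the column space of $M_d(y)$, yet $p(x) \neq 0$.

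The key move is to pass from $p$ to its square. Since $p \in \mathcal{P}_d$, we have $p^2 \in \mathcal{P}_{2d}$, and $p^2 \ge 0$ on all of $\re^n$, so in particular $p^2|\mathcal{V}(M_d(y)) \ge 0$. Moreover, the column relation $p(X) = 0$ is precisely the statement that $\hat p$ lies in the kernel of $M_d(y)$, since $p(X)$ is by definition the vector $M_d(y)\hat p$; combining this with the defining identity \reff{eq:Mpq} taken with $q = p$ yields
\[
L_y(p^2) = \langle M_d(y)\hat p, \hat p\rangle = 0.
\]
Hence $p^2$ is one of the polynomials over which the intersection defining $V(L_y)$ is taken, so $V(L_y) \subseteq \mathcal{Z}(p^2)$. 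But $p^2(x) = p(x)^2 \neq 0$, so $x \notin \mathcal{Z}(p^2)$ and therefore $x \notin V(L_y)$. This establishes $V(L_y) \subseteq \mathcal{V}(M_d(y))$.

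There is essentially no obstacle here. The only point worth a moment's care is the equivalence between a column relation $p(X) = 0$ and the membership of $\hat p$ in $\ker M_d(y)$, which is immediate from the way $p(X)$ is defined; everything else is the standard trick of squaring a kernel element so as to produce a globally nonnegative polynomial (hence nonnegative on $\mathcal{V}(M_d(y))$) that is annihilated by $L_y$.
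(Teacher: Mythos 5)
Your proof is correct and follows essentially the same route as the paper's: square the kernel element $p$, use \reff{eq:Mpq} to get $L_y(p^2)=\langle M_d(y)\hat p,\hat p\rangle=0$, note $p^2\ge 0$ on $\mathcal{V}(M_d(y))$, and conclude $V(L_y)\subseteq\mathcal{Z}(p^2)=\mathcal{Z}(p)$ for every such $p$. The only difference is cosmetic — you phrase it pointwise by contraposition rather than as an intersection over all column relations.
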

\begin{proof}
Let $p$ be an arbitrary polynomial such that
$p\in \mathcal{P}_{d}$ and $p(X)=0$ in the column space of $M_{d}(y)$.
Then $L_{y}(p^{2}) = \langle M_{d}(y)\hat{p},~\hat{p} \rangle
=0$. Since $p^{2}|  \mathcal{V}(M_{d}(y)) \ge 0$, it follows that
$V(L_{y}) \subseteq \mathcal{Z}(p^{2}) =   \mathcal{Z}(p)$.
By definition of $\mathcal{V}(M_{d}(y))$ in \reff{def:momvar},
the result is proved.
%Since the preceding inclusion holds for each
%$p\in \mathcal{P}_{d}$ with $p(X)=0$, the result follows.
\end{proof}

In view of Proposition 2.8, the following result refines the
necessary condition $rank~M_{d}(y) \le card~ \mathcal{V}(M_{d}(y))$.

\begin{cor}
If $y$ has a representing measure, then $rank~M_{d}(y) \le card~V(L_{y})$.
\end{cor}
\begin{proof}
Let $\mu$ be a representing measure for $y$. Then $rank~M_{d}(y)
\le card~supp~\mu$ (see relation~\reff{rank<=card} in Section~1),
and the result follows from Proposition~\ref{sup(mu)<=V(Ly)}.
\end{proof}

 We conclude this section with an example which shows that
 $V(L_{y})$ may be a proper subset of $ \mathcal{V}(M_{d}(y))$
(in a case where $y$ has a representing measure).
\begin{example}
For $n=2$, $d=3$, consider the moment matrix
$$ M_{3}(y):=
\bbm
8 & 0 & 0 & 6 & 0 & 6   & 0 & 0 & 0 & 0      \\
0 & 6 & 0 & 0 & 0 & 0   & 6 & 0 & 4 & 0      \\
0 & 0 & 6 & 0 & 0 & 0   & 0 & 4 & 0 & 6      \\
6 & 0 & 0 & 6 & 0 & 4  & 0 & 0 & 0 & 0      \\
0 & 0 & 0 & 0 & 4 & 0 & 0 & 0 & 0 & 0      \\
6 & 0 & 0 & 4 & 0 & 6 & 0 & 0 & 0 & 0      \\
0 & 6 & 0 & 0 & 0 & 0   & 6 & 0 & 4 & 0      \\
0 & 0 & 4 & 0 & 0 & 0  & 0 & 4 & 0 & 4      \\
0 & 4 & 0 & 0 & 0 & 0   & 4 & 0 & 4 & 0      \\
0 & 0 & 6 & 0 & 0 & 0  & 0 & 4 & 0 & 6
\ebm .
$$
A calculation shows that $M_{3}(y)\succeq 0$, with
$rank~M_{3}(y) = 8$.  $ \mathcal{V}(M_{3}(y))$ is
determined by the column relations $X_{1} = X_{1}^{3}$
and $X_{2}= X_{2}^{3}$, and thus consists of the 9 points
$u_{1}= (0,0)$,  $u_{2}= (0,1)$,
$u_{3}= (0,-1)$,  $u_{4}= (-1,0)$,
$u_{5}= (-1,1)$,  $u_{6}= (-1,-1)$,
$u_{7}= (1,0)$,  $u_{8}= (1,1)$,
$u_{9}= (1,-1)$.
Observe that $y$ has the $8$-atomic representing measure
$\mu :=     \sum_{i=2}^{9} \delta_{u_{i}}$,
and we will show that $V(L_{y})= supp~\mu$, so that $V(L_{y})$ is
a proper subset of $ \mathcal{V}(M_{3}(y))$.
To see this, we consider the dehomogenized  Robinson polynomial,
$$ r(x_{1},x_{2}) =   x_{1}^{6} + x_{2}^{6} -x_{1}^{4}x_{2}^{2}
- x_{1}^{2}x_{2}^{4} - x_{1}^{4}
- x_{2}^{4}-x_{1}^{2}-x_{2}^{2}  + 3 x_{1}^{2}x_{2}^{2} + 1.$$
It is known that $r(x_{1},x_{2})$ is nonnegative on $\re^{2}$
and has exactly 8 zeros in the
affine plane, namely the points in $supp~\mu$ (cf. \cite{Rez07}).
A calculation shows that $L_{y}(r) = 0$, so
$V(L_{y}) \subseteq \mathcal{Z}(r) = supp~\mu \subseteq V(L_{y})$
(by Proposition~\ref{sup(mu)<=V(Ly)}),
so $V(L_{y}) = supp~\mu$ and thus
$V(L_{y})$ is a proper subset of $ \mathcal{V}(M_{3}(y))$.

It is known that $r(x_{1},x_{2})$ is not a sum of squares
(cf. \cite{Rez07});
to see this using variety methods,
suppose to the contrary that $r=\sum_{i} r_{i}^2$, with
each $r_{i}\in \mathcal{P}_{3}$. Then
$supp~\mu = \mathcal{Z}(r) = \bigcap_{i}  \mathcal{Z}(r_{i})$,
whence $supp~\mu \subseteq  \mathcal{Z}(r_{i})$ for each $i$.
It now follows from \cite{CF96} that for each $i$,
$r_{i}(X_{1},X_{2}) = 0$ in the column space
of $M_{3}(y)$. Thus, we have $ \mathcal{V}(M_{3}(y))
\subseteq \bigcap_{i}  \mathcal{Z}(r_{i}) = supp~\mu$,
a contradiction. This example also illustrates a moment sequence
 $y$ with a $rank~M_{d}(y)$-atomic representing measure
 and $rank~M_{d}(y) < card~ \mathcal{V}(M_{3}(y))< +\infty$;
the first such example appears in \cite{F082}.
\qed
\end{example}

\section{Solution of the bivariate quartic moment problem}
\setcounter{equation}{0}

Throughout this section, we consider bivariate quartic moment problems,
that is, $n=2$ and the degree $2d=4$.
Let $y \in \mc{M}_{2,4}$ be a truncated moment sequence of degree 4,
which is associated with the second order moment matrix
\[
M_2(y) :=
\bbm
y_{00} & y_{10} & y_{01} & y_{20} & y_{11} & y_{02} \\
y_{10} & y_{20} & y_{11} & y_{30} & y_{21} & y_{12} \\
y_{01} & y_{11} & y_{02} & y_{21} & y_{12} & y_{03} \\
y_{20} & y_{30} & y_{21} & y_{40} & y_{31} & y_{22} \\
y_{11} & y_{21} & y_{12} & y_{31} & y_{22} & y_{13} \\
y_{02} & y_{12} & y_{03} & y_{22} & y_{13} & y_{04}
\ebm .
\]
As noted in Introduction (cf. (1.3)), if $M_{2}(y)$ is singular,
then $y$ has a representing measure if and only if
$M_{2}(y)$ is positive semidefinite, recursively generated,
and $rank~M_{2}(y)\le card~\mathcal{V}(M_{2}(y))$.
\begin{example}
Consider
\[
M_2(y)=
\bbm
8 & 0 & 0 & 4 & 0 & 4 \\
0 & 4 & 0 & 2 & 0 & -2 \\
0 & 0 & 4 & 0 & -2 & 0 \\
4 & 2 & 0 & 11 & 0 & a \\
0 & 0 & -2 & 0 & a & 0 \\
4 & -2 & 0 & a & 0 & b \\
\ebm .
\]
With $a=1$ and $b=3$,
$M_{2}(y)$ is positive and recursively generated,
with column relations $X_{1}=1-2X_{2}^{2}$ and $X_{2}=-2X_{1}X_{2}$,
and $rank~M_{2}(y) = 4$. A calculation shows that
$x_1=1-2x_2^{2}$ and $x_2=-2x_1x_2$ have only 3 common zeros,
so $3=card~\mathcal{V}(M_{2}(y)) <  rank~M_{2}(y) = 4 $,
whence (1.3) implies that $y$ has no representing measure.
We will show below how to approximate $y$ with truncated
moment sequences having representing measures.\qed
\end{example}

For the case when $M_{2}(y)\succ 0$, it has been an open question
as to whether $y$ admits a representing measure. The aim of this
section is to give an affirmative answer to this question.
We begin, however, by showing that when $M_{2}(y)$ is merely positive
semidefinite, then $y$ admits approximate representing measures.

\begin{thm} \label{thm:clos-rep(2,4)}
If $y \in \mc{M}_{2,4}$ and $M_2(y)\succeq 0$, then $y \in \overline{\mc{R}_{2,4}}$.
\end{thm}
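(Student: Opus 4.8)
The plan is to invoke Theorem~\ref{thm:pos=>clos-rep} with $K=\re^2$, $n=2$, $k=4$: according to that result, $y\in\overline{\mc{R}_{2,4}}$ if and only if the Riesz functional $L_y:\mc{P}_4\to\re$ is positive. So the whole task reduces to checking that $L_y(p)\ge 0$ for every $p\in\mc{P}_4$ with $p\ge 0$ on $\re^2$.

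To do this I would appeal to the positive case $n=d=2$ of Hilbert's theorem, i.e.\ property $(H_{2,2})$: any $p\in\mc{P}_4$ that is nonnegative on $\re^2$ admits a decomposition $p=\sum_i p_i^2$ with each $p_i\in\mc{P}_2$. Combining this with the linearity of $L_y$ and the defining identity \reff{eq:Mpq} for the moment matrix gives
\[
L_y(p)=\sum_i L_y(p_i^2)=\sum_i \langle M_2(y)\,\hat{p_i},\,\hat{p_i}\rangle .
\]
Since $M_2(y)\succeq 0$ by hypothesis, every term on the right is nonnegative, so $L_y(p)\ge 0$. Hence $L_y$ is positive, and the implication $i)\Rightarrow iii)$ of Theorem~\ref{thm:pos=>clos-rep} delivers $y\in\overline{\mc{R}_{2,4}}$.

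There is essentially no obstacle in this argument; it is precisely the ``approximation half'' of the bivariate quartic problem, and it works verbatim in every positive case $(H_{n,d})$ of Hilbert's theorem. The genuinely hard part lies beyond this statement: upgrading the hypothesis $M_2(y)\succ 0$ to the existence of an honest (rather than merely approximate) representing measure. That will require not bare positive semidefiniteness but strict positivity of $L_y$, together with the fact that $\re^2$ is a determining set, so that Theorem~1.3 applies; producing a strictly positive $L_y$ (or reducing to such a situation) is where the real work of Section~3 will go.
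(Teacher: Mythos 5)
Your proposal is correct and follows exactly the paper's own argument: reduce to positivity of $L_y$ via Theorem~\ref{thm:pos=>clos-rep}, then use Hilbert's theorem (the case $n=d=2$) to write any nonnegative $p\in\mc{P}_4$ as a sum of squares of quadratics, so that $M_2(y)\succeq 0$ and the identity \reff{eq:Mpq} give $L_y(p)\ge 0$. Nothing further is needed.
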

\begin{proof}
Let $y \in \mc{M}_{2,4}$ be such that $M_2(y)\succeq 0$.
To show $y \in \overline{\mc{R}_{2,4}}$,
by Theorem~\ref{thm:pos=>clos-rep},
it suffices to show that
the Riesz functional $L_y$ is positive.
If a polynomial $p(x) \in \mc{P}_4$ is nonnegative on the plane $\re^2$,
then by Hilbert's theorem it
must be a sum of squares, so
there exist bivariate quadratic
polynomials $q_1(x), \ldots, q_m(x)$, $deg~q_{i} \le 2$ $(1\le i \le m)$,
 such that
\[
p(x) = q_1(x)^2 + \cdots + q_m(x)^2.
\]
Hence, since $M_{2}(y)\succeq 0$, we have
\[
L_y(p) = L_y(q_1^2) + \cdots + L_y(q_m^2) =
\langle M_2(y) \hat{q_1}, \hat{q_1} \rangle
+ \cdots + \langle M_2(y) \hat{q_m}, \hat{q_m} \rangle \geq 0,
\]
so $L_{y}$ is positive.
It now follows from
Theorem~\ref{thm:pos=>clos-rep} that $y \in \overline{\mc{R}_{2,4}}$.
\end{proof}

Note that if $M_{2}(y)$ is positive and singular, and
$y$ does not have a representing measure, then $L_{y}$ is
positive, but not strictly positive. Indeed, positivity
follows from Theorem 3.2. Since $M_{2}(y)$ is singular,
there exists $p\in \mathcal{P}_{2}$, $p \not \equiv 0$,
such that $M_{2}(y)\hat{p} = 0$; then $p^{2}\ge 0$ and
$L_{y}(p^{2}) = \langle M_{2}(y)\hat{p},\hat{p} \rangle = 0$,
so $L_{y}$ is not strictly positive.

We now turn to the positive definite case.
The following result provides an affirmative answer to Question 1.2
for the case $n = d = 2$, $K = \re^2$.

\begin{thm}  \label{thm:n=2deg=4}
If  $M_2(y) \succ 0$,
then $y$  has a representing measure.
\end{thm}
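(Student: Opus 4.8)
The plan is to reduce Theorem~\ref{thm:n=2deg=4} to Theorem~\ref{thm:strpos=>rep} by showing that $M_2(y) \succ 0$ implies $L_y$ is strictly $K$-positive for $K = \re^2$, which is a determining set of degree $4$ since it has nonempty interior. So the entire task is: \emph{given $M_2(y) \succ 0$, show $L_y(p) > 0$ for every $p \in \mc{P}_4$ with $p \ge 0$ on $\re^2$ and $p \not\equiv 0$}.

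The key observation is Hilbert's theorem in the case $(H_{2,2})$: every nonnegative $p \in \mc{P}_4$ is a sum of squares of polynomials in $\mc{P}_2$. So write $p = q_1^2 + \cdots + q_m^2$ with each $q_i \in \mc{P}_2$. Then, exactly as in the proof of Theorem~\ref{thm:clos-rep(2,4)},
\[
L_y(p) = \sum_{i=1}^m \langle M_2(y)\hat{q_i}, \hat{q_i}\rangle.
\]
Since $M_2(y) \succ 0$, each summand is $\ge 0$, and it is $> 0$ precisely when $\hat{q_i} \ne 0$, i.e.\ when $q_i \not\equiv 0$. Since $p \not\equiv 0$, at least one $q_i \not\equiv 0$, so $L_y(p) > 0$. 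This establishes strict positivity (it also re-proves ordinary positivity, so $L_y$ is strictly positive). Then Theorem~\ref{thm:strpos=>rep} applies with $K = \re^2$ and gives that $y$ admits a representing measure supported in $\re^2$, i.e.\ an ordinary representing measure.

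I do not expect any serious obstacle here: the argument is essentially a one-line deduction once Hilbert's theorem and Theorem~\ref{thm:strpos=>rep} are in hand. The only point requiring a moment's care is that a sum-of-squares decomposition of $p$ uses polynomials of degree at most $2$ (so that $L_y$ and $M_2(y)$ are defined on them and the identity $L_y(q_i^2) = \langle M_2(y)\hat{q_i}, \hat{q_i}\rangle$ holds) — this is exactly the content of $(H_{2,2})$, which is one of the three positive cases of Hilbert's theorem. One should also note explicitly that $\re^2$ is a determining set of degree $4$, so that the hypothesis of Theorem~\ref{thm:strpos=>rep} is met; this is immediate since $\re^2$ has nonempty interior and a polynomial of degree $\le 4$ vanishing on a set with nonempty interior is identically zero.

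A remark worth adding is that this proof is purely existential: Theorem~\ref{thm:strpos=>rep} rests on Lemma~\ref{lem:clos-int} (the interior of a convex set equals the interior of its closure) and the Bayer--Teichmann theorem, neither of which produces an explicit atomic measure. So, unlike the singular case governed by flat-extension techniques, this argument does not yield a procedure for computing the representing measure or its atoms — it only asserts existence.
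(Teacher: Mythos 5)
Your proof is correct and follows exactly the paper's own argument: Hilbert's theorem $(H_{2,2})$ gives the sum-of-squares decomposition into degree-$2$ polynomials, positive definiteness of $M_2(y)$ yields strict positivity of $L_y$, and Theorem~\ref{thm:strpos=>rep} (with $\re^2$ a determining set) finishes the proof. No gaps.
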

\begin{proof}
Clearly $\re^2$ is a determining set.
By Theorem~\ref{thm:strpos=>rep}, it suffices to show that $L_{y}$ is strictly
positive. Proceeding as in the previous proof, if $p\in \mc{P}_{4}$
is nonnegative on $\re^2$ and not identically zero, then $p$ is of the form
$p(x) = q_1(x)^2 + \cdots + q_m(x)^2$,
with $deg~q_{i}\le 2$ $(1\le i \le m)$ and every $q_{i}\not \equiv 0$.
Since $M_{2}(y)\succ 0$, we have
$L_y(p) = L_y(q_1^2) + \cdots + L_y(q_m^2) =
\langle M_2(y) \hat{q_1}, \hat{q_1} \rangle
+ \cdots + \langle M_2(y) \hat{q_m}, \hat{q_m} \rangle >0$,
and the result follows.
\end{proof}

\begin{remark}
Theorem \ref{thm:n=2deg=4} shows that if $n=2$ and $M_{2}(y)\succ 0$, then $y$
has a representing measure, whence \cite{BT} implies that $y$
has a representing measure $\mu$ with $card~supp~\mu
\le dim~\mc{P}_{4} = 15$.
We do not have a better upper
bound for the size of the support,
and it remains an
open problem as to whether, in this case, $M_{2}(y)$ actually
has a flat extension $M_{3}(\tilde{y})$, with a corresponding
$6$-atomic representing measure for $y$. In the case
when $n=2$ and $M_{2}(y)$ is positive semidefinite and singular,
$y$ has a representing measure if and only if the conditions of
(1.3) hold, and in this case, the results of \cite{CF052} show that
either $M_{2}(d)$ has a flat extension $M_{3}(\tilde{y})$,
or $M_{2}(y)$ admits a positive extension  $M_{3}(\tilde{y})$
satisfying $rank~M_{3}(\tilde{y}) = 1 + rank~M_{2}(y)$,
and $M_{3}(\tilde{y})$ has a flat extension.
This leads to our next question (cf. \cite{CF02,LL}).
\end{remark}

\begin{quest} \label{quest3.5}
If $y\in \mc{M}_{2,4}$ and $M_2(y) \succ 0$,
does $M_2(y)$ have a flat extension?
Does $y$ have an extension $\tilde{y} \in \mc{M}_{2,6}$
such that $M_3(\tilde{y})$ is positive and has a flat extension?
\end{quest}

%We conclude this section with
We next present
two examples which illustrate
Theorem 3.2 in cases where $y$ has no representing measure.

\begin{exm}
Consider the moment sequence $y\in \mc{M}_{2,4}$ such that
\[
M_2(y) =
\bbm
1 & 1 & 1 & 1 & 1 & 1 \\
1 & 1 & 1 & 1 & 1 & 1 \\
1 & 1 & 1 & 1 & 1 & 1 \\
1 & 1 & 1 & 2 & 2 & 2 \\
1 & 1 & 1 & 2 & 2 & 2 \\
1 & 1 & 1 & 2 & 2 & 2
\ebm .
\]
Clearly, $M_{2}(y)\succeq 0$.
Since $X_{1}=1$ but $X_{1}^{2} \not = X_{1}$, $M_{2}(y)$ is not recursively
generated, so $y$ has no representing measure.
However, by Theorem~\ref{thm:clos-rep(2,4)},
$y$ lies in the closure of moment sequences having representing measures.
To see this explicitly,
define the moment sequence $y(\eps)$  via the moment matrix  $M_{2}(y(\eps)):=$
\[
 \bbm 1 & 1 + \eps^{3/4}-\eps & 1+\eps^{3/4}-\eps & 1+\eps^{1/2}-\eps & 1+\eps^{1/2}-\eps & 1+\eps^{1/2}-\eps \\
          1+ \eps^{3/4}-\eps  & 1+ \eps^{1/2}-\eps & 1+ \eps^{1/2}-\eps & 1+\eps^{1/4}-\eps & 1+\eps^{1/4}-\eps &  1+\eps^{1/4}-\eps \\
 1+ \eps^{3/4}-\eps  & 1 + \eps^{1/2}-\eps  & 1 + \eps^{1/2}-\eps  & 1+\eps^{1/4}-\eps & 1+\eps^{1/4}-\eps & 1+\eps^{1/4}-\eps   \\
 1 + \eps^{1/2}-\eps & 1+\eps^{1/4}-\eps & 1+\eps^{1/4}-\eps & 2-\eps & 2- \eps & 2- \eps\\
 1+ \eps^{1/2}-\eps  & 1+ \eps^{1/4}-\eps & 1+ \eps^{1/4}-\eps & 2-\eps & 2-\eps &  2-\eps \\
 1+ \eps^{1/2}-\eps  & 1 + \eps^{1/4}-\eps  & 1 + \eps^{1/4}-\eps  & 2-\eps & 2-\eps & 2-\eps
   \ebm .\]
A calculation shows that $y(\eps)$ has the $2$-atomic representing measure
\[
(1-\eps)\delta_{(1,1)} + \eps \delta_{(\eps^{-1/4},\eps^{-1/4})},
\]
and
obviously $y(\eps) \to y$ as $\eps \to 0$.
\qed
\end{exm}

\begin{exm}
Let us return to Example 3.1. With $a=1$ and $b=3$, $M_{2}(y)$ is
positive semidefinite, so although $y$ has no representing measure,
Theorem 3.2 implies that
$y$ can be
approximated by moment sequences having measures. One way to do this
is to replace $b=3$ by $b=3+\frac{1}{m}$. The resulting moment sequence
$y^{(m)}$ satisfies      $M_{2}(y^{(m)})\succeq 0$ and
    $M_{2}(y^{(m)})$ is recusrsively generated. Further,
$\mathcal{V}(M_{2}(y^{(m)}))
= \{(x_1,x_2): x_2 = -2x_1x_2\}$, and since the variety is infinite,
(1.3) implies that $y^{(m)}$ has a representing measure.
Following \cite[Proposition 3.6]{CF052},
a calculation shows
 that although $M_{2}(y^{(m)})$ admits no flat extension
 $M_{3}(\widetilde{y^{(m)}})$
 (so $y^{(m)}$ has no $5$-atomic representing measure),
 $M_{2}(y^{(m)})$ does admit a positive extension
  $M_{3}(\widetilde{y^{(m)}})$, with $rank  ~M_{3}(\widetilde{y^{(m)}}) = 6$,
such that  $M_{3}(\widetilde{y^{(m)}})$ has a flat extension
  $M_{4}(\widetilde{\widetilde{y^{(m)}}})$. Thus, $y^{(m)}$ has a $6$-atomic
  representing measure.

 Another approach is to replace $a=1$ by $a= 1+\frac{1}{m}$
and $b=3$ by $b= 3 + \frac{1}{4m^{2}}$. Then the resulting
moment sequence $y^{(m)}$ has      $M_{2}(y^{(m)})\succ 0$,
so $y^{(m)}$ has a representing measure by Theorem~\ref{thm:n=2deg=4}.
Indeed, a {\it{Mathematica}} calculation shows that
with $ \widetilde{y^{(m)}}_{4,1}=\widetilde{y^{(m)}}_{2,3}=
 \widetilde{y^{(m)}}_{1,4}=   \widetilde{y^{(m)}}_{0,5}=0$,
 $M_{2}(y^{(m)})$ admits two distinct flat extensions
 $M_{3}(\widetilde{y^{(m)}})$
 (and corresponding $6$-atomic representing measures
 for $y^{(m)}$). \qed
\end{exm}

We conclude this section with an application of
Theorem~\ref{thm:n=2deg=4} to a solution to the
bivariate cubic moment problem, with $y$ of the form
$$y = \
\{y_{00},~y_{10},~y_{01},~       y_{20},~y_{11},~y_{02},
~  y_{30},~       y_{21},~y_{12},~y_{03}\},$$ with $y_{00}>0$.
To such a sequence we may associate $M_{1}(y)$ and
%the partially defined moment matrix
%$
%M \equiv
%\bbm
%M_{1}(y) & B(2) \\
%B(2)^{T} & *
%\ebm,
%$
%where
the block
\[
B(2) :=
\bbm
y_{20} & y_{11} & y_{02} \\
y_{30} & y_{21} & y_{12} \\
y_{21} & y_{12} & y_{03}
\ebm.
\]

\begin{thm}  \label{thm:deg=3}
Suppose $y\in \mathcal{M}_{2,3}$. If $y$ has a representing measure,
then $M_{1}(y)\succeq 0 $. Conversely, suppose $M_{1}(y)\succeq 0$.
\newline i) If $M_{1}(y)\succ 0$, then $y$ has a representing measure.
\newline ii) If $rank~M_{1}(y) = 2$, then $y$ has a representing measure
if and only if $Ran~B(2)\subseteq Ran~M_{1}(y)$ and
$\bbm
M_{1}(y) & B(2)
\ebm$ is recursively generated.
\newline iii) If $rank~M_{1}(y) = 1$, then $y$ has a representing
measure if and only if   $Ran~B(2)\subseteq Ran~M_{1}(y)$.
\end{thm}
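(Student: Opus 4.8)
The plan is to split the converse according to $rank~M_1(y)\in\{3,2,1\}$ and reduce each case to something already proved: the positive definite (rank-$3$) case to Theorem~\ref{thm:n=2deg=4}, and the two singular cases to the classically solved univariate truncated moment problem. Necessity of $M_1(y)\succeq 0$ is immediate, since a representing measure $\mu$ for $y$ gives $M_1(y)=\int [x]_1[x]_1^T\,d\mu\succeq 0$.

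For part (i), I would extend $y$ to a degree-$4$ sequence $\tilde y$ and write the candidate quartic moment matrix in block form,
$$ M_2(\tilde y)=\bbm M_1(y) & B(2)\\ B(2)^T & C(2)\ebm,\qquad
C(2)=\bbm \tilde y_{40}&\tilde y_{31}&\tilde y_{22}\\ \tilde y_{31}&\tilde y_{22}&\tilde y_{13}\\ \tilde y_{22}&\tilde y_{13}&\tilde y_{04}\ebm, $$
where $C(2)$ ranges over all $3\times 3$ Hankel matrices as the five new moments vary. Since $M_1(y)\succ 0$, the Schur complement criterion gives $M_2(\tilde y)\succ 0$ exactly when $C(2)\succ B(2)^TM_1(y)^{-1}B(2)=:S$. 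The $3\times 3$ Hankel matrices contain positive definite matrices with arbitrarily large least eigenvalue (e.g.\ $\tilde y_{40}=\tilde y_{04}=2t$, $\tilde y_{22}=t$, $\tilde y_{31}=\tilde y_{13}=0$, with least eigenvalue $t$), so I may pick $C(2)$ Hankel with $C(2)\succ S$; then $M_2(\tilde y)\succ 0$, and Theorem~\ref{thm:n=2deg=4} gives a representing measure for $\tilde y$, which a fortiori represents $y$.

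For parts (ii) and (iii), necessity of the two extra conditions is routine: a representing measure $\mu$ for $y$ is also one for $M_1(y)$, so $supp~\mu\subseteq\mathcal{V}(M_1(y))$; $\bbm M_1(y)&B(2)\ebm$ is recursively generated because recursiveness is a general necessary condition for representing measures (\cite{CF96}); and since $\bbm M_1(y)&B(2)\ebm=\int [x]_1[x]_2^T\,d\mu$ has column space contained in the span of $\{[x]_1(u):u\in supp~\mu\}=Ran~M_1(y)$, we get $Ran~B(2)\subseteq Ran~M_1(y)$. For sufficiency, $y_{00}>0$ forces $\mathcal{V}(M_1(y))$ to be a single point when $rank~M_1(y)=1$ and a line when $rank~M_1(y)=2$. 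In the rank-$1$ case $M_1(y)=vv^T$ with $v_0=\sqrt{y_{00}}>0$, so the only possible representing measure is $y_{00}\delta_u$, $u=(y_{10}/y_{00},\,y_{01}/y_{00})$, and a direct computation shows that $Ran~B(2)\subseteq Ran~M_1(y)$ is exactly the consistency $y_\alpha=y_{00}u^\alpha$ for $|\alpha|=3$, so $y_{00}\delta_u$ works. In the rank-$2$ case I would apply an invertible affine change of variables (which preserves $M_1(y)\succeq 0$, recursiveness, the range condition, and representability) carrying the line $\mathcal{V}(M_1(y))$ onto $\{x_2=0\}$; the relation $X_2=0$ then kills $y_{01},y_{11},y_{02}$, the range condition kills $y_{21},y_{12},y_{03}$, and $y$ becomes the univariate degree-$3$ sequence $(y_{00},y_{10},y_{20},y_{30})$ whose Hankel matrix $\bbm y_{00}&y_{10}\\ y_{10}&y_{20}\ebm$ inherits rank $2$ from $M_1(y)$ and is thus positive definite. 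The univariate truncated moment problem of degree $3$ with positive definite Hankel matrix is always solvable (\cite{CF96}), and transporting that measure back along the affine map yields a representing measure for $y$.

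I expect the main obstacle to be the column-relation bookkeeping in the singular cases, i.e.\ verifying that the relation(s) defining $\mathcal{V}(M_1(y))$ propagate through the block $B(2)$ under the stated hypotheses, so that the degree-$3$ data is forced to sit on that variety. In a flat-extension formulation this amounts to checking that $C(2):=W^TB(2)$, where $B(2)=M_1(y)W$, turns out to be of Hankel form (so that $M_2(\tilde y)$ is a genuine moment matrix and the flat extension theorem of \cite{CF96} applies); the range condition alone secures this in the rank-$1$ case, while recursiveness of $\bbm M_1(y)&B(2)\ebm$ is what is used in the rank-$2$ case.
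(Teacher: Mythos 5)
Your proof of the necessity of $M_{1}(y)\succeq 0$, of part (i), and of the necessity of the range and recursiveness conditions is essentially the paper's argument (the paper likewise builds a positive definite Hankel block $C(2)\succ B(2)^{T}M_{1}(y)^{-1}B(2)$ and invokes Theorem~\ref{thm:n=2deg=4}, and derives necessity in (ii)--(iii) from the Bayer--Teichmann extension $M_{2}[\mu]$). Where you genuinely diverge is in the sufficiency arguments for the singular cases. The paper stays inside the flat-extension framework: from $Ran~B(2)\subseteq Ran~M_{1}(y)$ it writes $B(2)=M_{1}(y)W$, sets $C:=W^{T}M_{1}(y)W$, uses recursiveness to propagate the column relation $X_{2}=\alpha 1+\beta X_{1}$ into the relations (3.2)--(3.3), checks that these force $C$ to have Hankel (moment-matrix) form, and then invokes the flat extension theorem of \cite{CF05}. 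You instead normalize by an invertible degree-one affine change of variables sending the line $\mathcal{V}(M_{1}(y))$ to $\{x_{2}=0\}$ and reduce to the odd-degree univariate Hamburger problem with positive definite Hankel block, which is solved in \cite{CF96}; in the rank-one case you exhibit the unique candidate atom directly. Both routes are correct. Yours is more elementary and more explicit about where the measure lives, at the cost of leaning on the (standard, but here unproved) invariance of positivity, recursiveness, the range condition, and representability under degree-one transformations. A noteworthy byproduct of your argument is that in the rank-two case you never actually use recursiveness of $\bbm M_{1}(y) & B(2)\ebm$ for sufficiency: after normalization, $X_{2}=0$ kills $y_{01},y_{11},y_{02}$ and the range condition alone kills $y_{21},y_{12},y_{03}$, so recursiveness comes out as a consequence of the other hypotheses rather than an independent assumption; the paper's proof, by contrast, uses it to secure the moment-matrix structure of $C$.
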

\begin{proof}
Since a representing measure for $y$ is, in particular,
 a representing measure for
$M_{1}(y)$, the necessity of the condition $M_{1}(y)\succeq 0$ is clear.
Conversely, suppose   $M_{1}(y)\succeq 0$.
For i), if $M_{1}(y)\succ 0$, then it is
not difficult to see that $M_{1}(y)$ admits a positive
definite moment matrix extension $M_{2}$, of the form
\[
M_{2} \equiv
\bbm
M_{1}(y) & B(2) \\
B(2)^{T} & C(2)
\ebm,
\]
where
\[
C(2) =
\bbm
y_{40} & y_{31} & y_{22} \\
y_{31} & y_{22} & y_{13} \\
y_{22} & y_{13} & y_{04}
\ebm.
\]
Indeed, by choosing $y_{40}$, $y_{22}$, and $y_{04}$
successively, and sufficiently large, we can insure
that $C(2)\succ B(2)^{T}M_{1}(y)^{-1}B(2)$.
By   Theorem~\ref{thm:n=2deg=4}, $M_{2}$ then has a representing measure,
which is obviously a representing measure for $y$.

Suppose next that $y$ has a representing measure. It follows from
\cite{BT} that $y$ has a finitely atomic representing measure $\mu$,
and thus $M_{2}[\mu]$ is a positive semidefinite and
recursively generated extension of
$M_{1}(y)$.
In particular, we must have $Ran~B(2)\subseteq Ran~M_{1}(y)$
and
$\bbm
M_{1}(y) & B(2)
\ebm$ must be recursively generated. Now suppose that these
conditions hold and that $rank~M_{1}(y)=2$. Since $y_{00}>0$,
we may assume without loss of generality that there exist
scalars $\alpha$ and $\beta$ so that in the column space of
$M_{1}(y)$
we have a column dependence relation
\begin{equation}
 X_{2} = \alpha 1 + \beta X_{1}.
\end{equation}
Since
$\bbm
M_{1}(y) & B(2)
\ebm$ is recursively generated, we then have  the column relations
\begin{equation}
 X_{1}X_{2} = \alpha X_{1} + \beta X_{1}^{2},
\end{equation}
\begin{equation}
 X_{2}^{2} = \alpha X_{2} + \beta X_{1}X_{2}.
\end{equation}
Since $Ran~B(2)\subseteq Ran~M_{1}(y)$, there is a matrix $W$
such that $B(2) = M_{1}(y)W$, and we may thus define a positive,
rank-preserving
extension $M$ of $M_{1}(y)$ by
\[
M :=
\bbm
M_{1}(y) & B(2) \\
B(2)^{T} & C
\ebm,
\]
where $C := B(2)^TW$ ($= W^{T}M_{1}(y)W$).
It is straightforward to check that
the columns of $M$ satisfy (3.1)-(3.3), from which
it also follows that $M$ has the form of a moment matrix $M_{2}$.
Thus $M$ is a flat, positive moment matrix extension of $M_{1}(y)$,
whence \cite{CF05} implies the existence of a representing measure for
$M$, and thus for $y$.

The proof of iii) is similar to the proof of ii), but simpler.
It is straightforward to
check that if $rank~M_{1}(y) = 1$ and $Ran~B(2)\subseteq Ran~M_{1}(y)$,
then the dependence relations in the columns of $M_{1}(y)$ propagate
recursively so as to define a rank one (flat, positive) moment matrix
extension $M_{2}(y)$ of $M_1(y)$. The result follows as above.
\end{proof}

\section{Quadratic moment problems}

Let $y=(y_\af)_{\af\in \Z_{+}^n: |\af| \leq 2}$ be a quadratic moment sequence
such that $M_1(y) \succeq 0$.
%%We can normalize $y$ as $y_0=1$ without loss of generality.
Does $y$ have a representing measure?
For this question, we may assume without loss of generality that
$y_0 = 1$ and we may write $M_1(y)$ as
\[
M_1(y) = \bbm 1 & v_1^T \\ v_1 &  U \ebm,
\]
where $v_1 \in \re^{n}$.
Since $M_1(y) \succeq 0$, then $U-v_1v_1^T \succeq 0$, so the
Spectral Theorem implies that
 there exist vectors $v_2, \ldots, v_r$ in $\re^{n}$ such that
\[
U = v_1v_1^T + v_2v_2^T + \ldots + v_rv_r^T.
\]
A calculation now shows that we have
\[
M_1(y) = \frac{1}{r-1} \sum_{i=2}^r
\left(
\bbm 1 \\ v_1 \ebm \bbm 1 \\ v_1 \ebm^T +
(r-1) \bbm 0 \\ v_i \ebm \bbm 0 \\ v_i \ebm^T
\right).
\]
For $i=2, \ldots, r$, let $u_i^+ = v_1 + \sqrt{r-1} v_i,  u_i^- = v_1 - \sqrt{r-1} v_i$.
Then we have the representation
\[
M_1(y) = \frac{1}{2(r-1)} \sum_{i=2}^r
\left(
\bbm 1 \\ u_i^+ \ebm \bbm 1 \\ u_i^+ \ebm^T +
\bbm 1 \\ u_i^- \ebm \bbm 1 \\ u_i^- \ebm^T
\right),
\]
and hence we know $y$ has a $(2r-2)$-atomic representing measure
$$\mu = \sum_{i=2}^{r}  \frac{1}{2(r-1)} (\delta_{u_{i}^{+}} +
         \delta_{u_{i}^{-}} ).$$
In the sequel, we will show that $y$ actually has a
$rank~M_{1}(y)$-atomic representing measure (equivalently,
$M_{1}(y)$ admits a flat extension $M_{2}(\tilde{y})$).

Now we turn to the quadratic truncated moment problem
on an algebraic set $E(q):=\{x\in \re^n: q(x) = 0\}$
or a semialgebraic set $S(q):=\{x\in \re^n: q(x) \geq 0\}$,
where $q(x)$ is a quadratic polynomial in $x$.
If $y \in \mc{M}_{n,2}$ has a representing measure
supported in $E(q)$, it is necessary that
\[
M_1(y) \succeq 0, \quad L_y(q) = 0.
\]
Is the above also sufficient
for $y$ to have a representing measure
supported in $E(q)$?
If $y \in \mc{M}_{n,2}$ has a representing measure
supported in $S(q)$, it is necessary that
\[
M_1(y) \succeq 0, \quad L_y(q) \geq 0.
\]
Is the above also sufficient
for $y$ to have a representing measure
supported in $S(q)$?
These questions will be answered affirmatively under certain suitable conditions.

Throughout this section, we will employ a well-known connection between
nonnegative polynomials and
positive semidefinite real symmetric matrices (cf. \cite{Lau}),
which we apply in the case of quadratic polynomials.
Let $p(x) = \sum_{ \af \in\Z_{+}^n: |\af| \le 2} p_\af x^{\af}$.
Let $y_{p}$ denote the degree 2 moment sequence whose moment corresponding
to a monomial of degree 1, or to a monomial of the form
$x^\af = x_{i}x_{j}~ (i\not= j)$, is
 $p_\alpha/2$, and whose
moment corresponding to a monomial of degree 0, or
of the form $x^\af = x_{i}^2$, is
$p_\alpha$. A calculation shows that
\begin{equation} \label{eq:p-Qrep}
p(x) = [x]_{1}^{T}M_{1}(y_{p})[x]_{1}.
\end{equation}
 From this it follows immediately
that $p(x)$ is nonnegative on $\re^n$ if and only if there exists
a matrix $P$ such  that $P=P^{T}$, $P\succeq 0$, and
\begin{equation} \label{p=[]1^TP[]1}
  p(x) = [x]_{1}^{T}P[x]_{1}~ (x\in \re^n).
\end{equation}
In the case when $p(x)$ is a homogeneous quadratic, by compressing
$M_{1}(y_{p})$ to the rows and columns indexed by the variables $x_{i}$,
and similarly for $[x]_{1}$, we see that $p(x)$
admits a representation of the form
\begin{equation} \label{p(x)=x^TPx}
  p(x) = x^{T}Px~ (x\in \re^n),
\end{equation}
where $P=P^{T}$; further,
$p(x)$ is nonnegative on
$\re^n$ if and only if
$P\succeq 0$.

In the sequel, for $m\times m$ real matrices  $R\equiv (r_{ij})$ and
$S\equiv (s_{ij})$, we denote by
$R \bullet S$ the Frobenius inner product, defined by
$R \bullet S = Trace(RS^T) = \sum_{1\le i,j \le m} r_{ij}s_{ij}$.
A calculation shows that if $p$ has a representation as in \reff{p=[]1^TP[]1}
and $y$ is a quadratic moment sequence, then
\begin{equation} \label{Lyp=PdotM1(y)}
L_{y}(p) = P\bullet M_{1}(y).
\end{equation}
%Let $R\circ S$ denote the Schur product of $R$ and $S$, i.e.,
%$R\circ S = (r_{ij}s_{ij})$. Note that $R\bullet S=
%\mathbf{1}^T R\circ S \mathbf{1}$, where $\mathbf{1}$ is the
%vector of all $1$'s. The Schur Product Theorem [\cite{HJ}, Theorem 5.2.1]
% implies that
If $R=R^{T}\succeq 0$ and $S=S^{T}\succeq 0$,
then $R=LL^T$ and $S=MM^T$, and thus
\begin{align*}
R\bullet S &= Trace(LL^TMM^T) =  Trace(M^TLL^TM) \\
& =  Trace(M^TL(M^TL)^T) = (M^TL) \bullet (M^TL) \ge 0.
\end{align*}
It now follows that
\begin{equation} \label{PSDdot>=0}
if~ R=R^{T}\succeq 0~and~S=S^{T}\succeq 0,~then~R\bullet S\ge 0.
\end{equation}

\subsection{Quadratic polynomials nonnegative on quadratic sets}

A useful tool in quadratic moment theory, which we will employ
repeatedly,
is the following matrix decomposition
developed by Sturm and Zhang \cite{SZ}.
%****************NEW NOTATION FOR $ m \times m $ MATRICES************
In the sequel, let $\mathbb{M}_{m}$ denotes the space of real
$m \times m$ matrices, endowed with the norm induced by the
Frobenius inner product.

\begin{prop}[Corollary~4 \cite{SZ}] \label{prop:SZ-decomp}
Let $Q \in \mathbb{M}_{m} $ be a symmetric matrix.
If $X \in  \mathbb{M}_{m} $ is symmetric positive semidefinite
and has rank $r$, then
there exist nonzero vectors $u_1,\ldots, u_r \in \re^m$ such that
\[
X = u_1u_1^T+\cdots+u_ru_r^T, \quad
u_1^TQu_1 = \cdots = u_r^TQu_r = \frac{Q \bullet X}{r}.
\]
\end{prop}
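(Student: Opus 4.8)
The plan is to prove Proposition~\ref{prop:SZ-decomp} by induction on the rank $r$ of $X$, peeling off one rank-one piece at a time while keeping careful control of the quadratic form $u^TQu$ relative to the running trace $Q\bullet X$.

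First I would dispose of the base case $r=1$: if $X=uu^T$ is a nonzero rank-one positive semidefinite matrix, then $u^TQu = \mathrm{Trace}(Quu^T) = Q\bullet X$, which is exactly the required identity with $r=1$. For the inductive step, suppose the result holds for all ranks $< r$, and let $X\succeq 0$ have rank $r\ge 2$. Write $c = Q\bullet X$. I would first argue that one can find a unit-norm vector $w$ in the range of $X$ such that $w^T X^{-1}_{\mathrm{ran}} w \cdot (w^T Q w) $ hits the target average; more concretely, the key elementary fact is that the ``Rayleigh-type'' map $v \mapsto (v^TQv)/(v^TX^{\dagger}v)$, as $v$ ranges over the range of $X$ (with $X^\dagger$ the pseudoinverse), takes all values between its min and max, and in particular takes the value $c/r$ — because averaging over any orthonormal basis $\{e_i\}$ of $\mathrm{ran}(X)$ in the $X$-inner product gives $\sum_i e_i^TQ e_i = \mathrm{Trace}$ of the compression of $Q$ to that geometry $= Q\bullet X = c$, so the average of the $r$ values $e_i^TQe_i$ is $c/r$, forcing at least one $\ge c/r$ and one $\le c/r$, and then a connectedness/continuity argument on the sphere in $\mathrm{ran}(X)$ produces a vector hitting $c/r$ exactly. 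Scaling that vector appropriately yields a rank-one term $u_r u_r^T$ with $u_r^TQu_r = c/r$ and with $X - u_ru_r^T\succeq 0$ of rank $r-1$.

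The crucial bookkeeping step is then to check that $X' := X - u_ru_r^T$ satisfies $Q\bullet X' = c - c/r = c\cdot\frac{r-1}{r}$, so that applying the inductive hypothesis to $X'$ (rank $r-1$) yields vectors $u_1,\dots,u_{r-1}$ with $X' = \sum_{i=1}^{r-1} u_iu_i^T$ and $u_i^TQu_i = \frac{Q\bullet X'}{r-1} = \frac{c}{r}$ for each $i$. Combining, $X = \sum_{i=1}^r u_iu_i^T$ and all $u_i^TQu_i = c/r$, as desired; one checks each $u_i$ is nonzero since the decomposition is rank-exact. I would package the ``magnitude'' normalization carefully: after finding a direction $\hat u$ in $\mathrm{ran}(X)$ with the right \emph{value} of the normalized quadratic form, one must choose the length of $u_r = t\hat u$ so that simultaneously $X - t^2\hat u\hat u^T \succeq 0$ with rank exactly $r-1$ (take $t^2$ as large as possible, i.e.\ $t^2 = 1/(\hat u^T X^\dagger \hat u)$, which drops the rank by one) and the value $u_r^TQu_r = t^2(\hat u^TQ\hat u)$ equals $c/r$ — this is why I phrased the selection in terms of the ratio $(\hat u^TQ\hat u)/(\hat u^TX^\dagger\hat u)$ from the start, so that the two requirements are consistent rather than competing.

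The main obstacle, and the step deserving the most care, is the existence of the direction realizing the exact average value $c/r$: the averaging identity only hands us one basis-dependent collection of values summing to $c$, and I must convert ``some value $\ge c/r$ and some value $\le c/r$'' into ``some \emph{unit vector in} $\mathrm{ran}(X)$ attains exactly $c/r$ for the ratio function,'' which requires noting that the sphere of $\mathrm{ran}(X)$ (in the ordinary metric, say) is connected and the relevant function $v\mapsto (v^TQv)/(v^TX^\dagger v)$ is continuous and nonvanishing in the denominator there. Since Proposition~\ref{prop:SZ-decomp} is quoted from \cite{SZ} (it is their Corollary~4), I would, if space is tight, simply cite Sturm--Zhang for the full argument and present only the inductive skeleton above as the idea; but the inductive proof sketched here is self-contained modulo that one continuity/connectedness observation, which is where essentially all the content lies.
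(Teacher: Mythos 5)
Your argument is correct, but note that the paper itself gives no proof of this statement: it is quoted verbatim as Corollary~4 of Sturm--Zhang \cite{SZ}, so there is nothing internal to compare against. Your self-contained induction is sound: the base case is trivial; in the inductive step the function $v\mapsto (v^TQv)/(v^TX^{\dagger}v)$ is continuous and homogeneous of degree zero on $\mathrm{ran}(X)\setminus\{0\}$, which is connected once $r\ge 2$; any decomposition $X=\sum_{i=1}^r e_ie_i^T$ gives $\sum_i e_i^TQe_i=Q\bullet X$ and $e_i^TX^{\dagger}e_i=1$, so the ratio takes values on both sides of $c/r$ and the intermediate value theorem produces a direction attaining it exactly; scaling by $t^2=1/(\hat u^TX^{\dagger}\hat u)$ makes $X-u_ru_r^T\succeq 0$ of rank exactly $r-1$, and the trace bookkeeping $Q\bullet X'=c(r-1)/r$ hands the induction the same target value $c/r$. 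The one point worth comparing with the source: Sturm and Zhang's own argument is also a rank-one reduction, but it operates directly on an arbitrary decomposition $X=\sum p_ip_i^T$ and, when two terms $p_i,p_j$ straddle the average, replaces the pair by a rotation $(\cos\theta\, p_i+\sin\theta\, p_j,\ -\sin\theta\, p_i+\cos\theta\, p_j)$ with $\theta$ chosen by a one-variable intermediate value argument so that one new term hits the average exactly; this preserves $\sum u_iu_i^T=X$ automatically and avoids the pseudoinverse and the rank-drop lemma that your version must invoke. Your route buys a cleaner ``peel one exact vector'' step; theirs buys a more elementary, coordinate-free bookkeeping. Either is acceptable, and since the proposition is cited, deferring to \cite{SZ} (as the paper does) is also fine.
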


We will also utilize the following representation of
quadratic polynomials that are nonnegative on $S(q)$.

\begin{prop}[S-Lemma, Yakubovich (1971), \cite{Yak77}] \label{prop:S-lema}
Let $f(x),q(x)$ be two quadratic polynomials in $x$.
Suppose there exists $\xi\in \re^n$ such that $q(\xi)>0$.
Then
$f(x) \geq 0$ for all $x\in S(q)$ if and only if there exists $t\geq 0$ such that
\[
f(x) - t q(x) \geq 0, \quad \forall \,x \in \re^n.
\]
\end{prop}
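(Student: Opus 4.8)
The ``if'' direction is immediate and uses no hypothesis on $q$: if $f(x)-t\,q(x)\ge 0$ on $\re^n$ with $t\ge 0$, then for $x\in S(q)$ we have $q(x)\ge 0$, hence $f(x)\ge t\,q(x)\ge 0$. So the content is ``only if'', and the plan is to reduce it to a linear matrix inequality and then combine convex separation with Proposition~\ref{prop:SZ-decomp}. First I would homogenize: writing $z=[x]_1=(1,x_1,\dots,x_n)^T\in\re^{n+1}$, the correspondence behind \reff{eq:p-Qrep} gives symmetric $(n+1)\times(n+1)$ matrices $F,Q$ with $f(x)=z^TFz$ and $q(x)=z^TQz$, and the goal becomes: produce $t\ge 0$ with $F-tQ\succeq 0$, for then $f(x)-t\,q(x)=z^T(F-tQ)z\ge 0$ for all $x$.

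I would argue by contradiction. If $F-tQ\not\succeq 0$ for every $t\ge 0$, the ray $\{F-tQ:t\ge 0\}$ is disjoint from the PSD cone $\mathcal{S}^{+}_{n+1}$; separating these two closed convex sets, and using self-duality of $\mathcal{S}^{+}_{n+1}$ together with boundedness of the separating functional along the ray, yields a nonzero $Z\succeq 0$ with $F\bullet Z\le 0$ and $Q\bullet Z\ge 0$. The aim is then to turn $Z$ into a genuine point $x_0$ with $q(x_0)\ge 0$ and $f(x_0)<0$, contradicting the hypothesis. When $Z_{00}>0$ this is clean: normalize $Z_{00}=1$, apply Proposition~\ref{prop:SZ-decomp} to $Z$ with the matrix $Q$ to write $Z=\sum_{i=1}^r u_iu_i^T$ with $u_i^TQu_i=(Q\bullet Z)/r\ge 0$ for all $i$, and since $\sum_i u_i^TFu_i=F\bullet Z\le 0$ some summand has $u_i^TFu_i\le 0$, from which a violator is extracted using the Slater point $\xi$. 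Equivalently — and this is the packaging I would actually write — consider the SDP $\gamma:=\inf\{F\bullet X:\ X\succeq 0,\ X_{00}=1,\ Q\bullet X\ge 0\}$: Proposition~\ref{prop:SZ-decomp} shows it admits a rank-one (or near rank-one) optimal solution, so the hypothesis forces $\gamma\ge 0$, while $X=[\xi]_1[\xi]_1^T$ perturbed slightly into the interior of the cone is a Slater point precisely because $q(\xi)>0$; strong duality then supplies $t\ge 0$ and $s\ge 0$ with $F-tQ-sE_{00}\succeq 0$, hence $F-tQ\succeq 0$.

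The main obstacle, and the reason strict feasibility cannot be dropped, is the degenerate case $Z_{00}=0$ (equivalently, an optimal SDP solution that is singular with vanishing $0$-th row and column): then $Z$ only sees the homogeneous quadratic part $q_2$ of $q$, so Proposition~\ref{prop:SZ-decomp} yields merely a recession direction $u$ with $q_2(u)\ge 0$ and $f_2(u)\le 0$, not a point of $S(q)$ on which $f$ is negative. Converting such a direction into a true violator requires moving along $\xi+\lambda u$ and carefully tracking the signs of the linear and leading coefficients of $q(\xi+\lambda u)$ and $f(\xi+\lambda u)$ as $\lambda\to\infty$; this sign bookkeeping, together with the repeated use of Proposition~\ref{prop:SZ-decomp} to manufacture rank-one objects with a prescribed value of the $Q$-form, is where the real work lies. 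Since the statement is classical, for a self-contained account one may instead simply appeal to Yakubovich's argument \cite{Yak77} (or the standard survey literature on the S-procedure), and I would include a proof only to the extent needed to keep Section~4 self-contained.
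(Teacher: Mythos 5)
The paper never proves Proposition~\ref{prop:S-lema}; it is quoted as Yakubovich's classical result and used as a black box, with proofs supplied only for the neighboring variants (Propositions~\ref{prop:S-lm-E(q)} and~\ref{prop:F+eps}). So your sketch is not competing with an argument in the text, but it is entirely in the spirit of the arguments the paper does give: the matrix reformulation $f(x)=z^TFz$, $q(x)=z^TQz$ with $z=[x]_1$, separation in the space of symmetric matrices, the Sturm--Zhang decomposition, and a case split on whether the vectors $u_i=(\tau_i,w_i)$ have $\tau_i\neq 0$ (genuine points of $S(q)$) or $\tau_i=0$ (recession directions, handled by moving along $\xi+\lambda w_i$). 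That last analysis is carried out almost verbatim in the paper's proof of Proposition~\ref{prop:F+eps}(a), so your ``sign bookkeeping'' step can be lifted from there.

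One soft spot should be made explicit. Your first packaging --- separating the ray $\{F-tQ: t\ge 0\}$ from the PSD cone --- only yields $F\bullet Z\le 0$ under generic proper separation, and a point of $S(q)$ where $f\le 0$ is no contradiction; you need $F\bullet Z<0$. The missing observation is that the Slater hypothesis $q(\xi)>0$ is exactly what makes the cone $\mathcal{E}=\{S+tQ:\, S^T=S\succeq 0,\ t\ge 0\}$ closed: if $S_k+t_kQ\to A$ then $0\le t_k\le \tilde{\xi}^T(S_k+t_kQ)\tilde{\xi}/q(\xi)$ with $\tilde{\xi}=[\xi]_1$, so $\{t_k\}$ is bounded and one passes to limits, just as in the paper's proof of Proposition~\ref{prop:S-lm-E(q)} (where two-sided bounds on $t_k$ come from $q(\xi)>0>q(\zeta)$). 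Closedness of $\mathcal{E}$ then licenses strict separation of $F\notin\mathcal{E}$, giving $Z\succeq 0$, $Q\bullet Z\ge 0$, $F\bullet Z<0$, and the contradiction goes through; note that the paper's warning in the proof of Proposition~\ref{prop:F+eps}(a) that $\mathcal{E}$ is ``not necessarily closed'' refers precisely to the no-Slater-point case, which is why only the $\varepsilon$-perturbed conclusion survives there. Your second, SDP-duality packaging is sound for the same underlying reason (primal strict feasibility at $[\xi]_1[\xi]_1^T+\varepsilon I$ uses $q(\xi)>0$, and $\gamma\ge 0$ follows from Proposition~\ref{prop:SZ-decomp} plus the recession analysis); with the closedness/strictness point spelled out, either version is a complete proof.
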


When $f(x)$ and $g(x)$ are homogeneous and quadratic,
if $f(x)$ is nonnegative
on the algebraic set $E(q)=\{x\in\re^n: q(x)=0\}$, then
a certificate like that provided by S-Lemma holds,
but without requiring $t\geq 0$,
as pointed out by Luo, Sturm and Zhang \cite{LSZ}.
However, we are not able  to find a complete proof
from \cite{LSZ} and the references therein.
Moreover, this result can also be generalized to
the case when $f(x)$ and $g(x)$ are non-homogeneous.
So here we summarize these results and
include a proof for completeness.

\begin{prop}\label{prop:S-lm-E(q)}
Let $f(x),q(x)$ be two quadratic polynomials in $x$, and assume
$E(q) \not= \emptyset$.
Suppose $f(x) \geq 0$ for all $x\in E(q)$, and suppose
there exist $\xi,\zeta\in \re^n$ such that $q(\xi)>0>q(\zeta)$.
Then
there exists $t\in\re$ such that
\[
f(x) - t q(x) \geq 0, \quad \forall \,x \in \re^n.
\]
\end{prop}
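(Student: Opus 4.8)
The plan is to reduce the claim to the ordinary S-Lemma (Proposition~\ref{prop:S-lema}) through a dichotomy that I expect to be the crux: \emph{if $q$ is indefinite (i.e.\ $q(\xi)>0>q(\zeta)$ for some $\xi,\zeta$, as assumed) and $f\ge 0$ on $E(q)$, then $f\ge 0$ on all of $S(q)$, or $f\ge 0$ on all of $S(-q)$.} Granting this, the proposition is immediate: in the first case, since $q(\xi)>0$, Proposition~\ref{prop:S-lema} applied to $f$ and $q$ gives $t\ge 0$ with $f-tq\ge 0$ on $\re^n$; in the second case, since $-q(\zeta)>0$, the same proposition applied to $f$ and $-q$ gives $s\ge 0$ with $f+sq\ge 0$, i.e.\ $f-tq\ge 0$ with $t=-s$. (The hypothesis $E(q)\ne\emptyset$ is automatic here, since $q$ changes sign on $[\xi,\zeta]$.)

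To prove the dichotomy I would argue by contradiction: assume $f$ takes a negative value somewhere on $S(q)$ and somewhere on $S(-q)$. Since $f\ge 0$ on $E(q)$, the open sets $A:=\{x:q(x)>0,\ f(x)<0\}$ and $B:=\{x:q(x)<0,\ f(x)<0\}$ are both nonempty (a point of $S(q)$ where $f<0$ cannot lie in $E(q)$, so it lies in $A$; similarly for $S(-q)$ and $B$). Write $q_{\mathrm h}$ for the degree-$2$ homogeneous part of $q$; as $q$ has degree $2$ we have $q_{\mathrm h}\not\equiv 0$, so $(a,b)\mapsto q_{\mathrm h}(b-a)$ is a nonzero polynomial on $\re^n\times\re^n$ and cannot vanish on all of the nonempty open set $A\times B$. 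I would then fix $a\in A$, $b\in B$ with $q_{\mathrm h}(b-a)\ne 0$ and restrict to the line $\ell(s)=a+s(b-a)$: the scalar polynomials $Q(s)=q(\ell(s))$ and $F(s)=f(\ell(s))$ have degree $\le 2$, and the leading coefficient of $Q$ is $q_{\mathrm h}(b-a)\ne 0$, so $\deg Q=2$.

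The endgame: since $Q(0)=q(a)>0>q(b)=Q(1)$, $Q$ has two distinct real roots $s_1<s_2$, at least one in $(0,1)$; both $\ell(s_1),\ell(s_2)$ lie in $E(q)$, so $F(s_1),F(s_2)\ge 0$, and $F\ge 0$ at the root in $(0,1)$ while $F(0)=f(a)<0$ and $F(1)=f(b)<0$. From $F(0),F(1)<0$ together with $F\ge 0$ at an interior point of $[0,1]$ one sees $F$ must be a strictly downward-opening parabola (an affine function, or an upward parabola, that is negative at $0$ and at $1$ is negative on all of $[0,1]$). But a downward parabola that is $\ge 0$ at the two distinct points $s_1,s_2$ is $\ge 0$ throughout $[s_1,s_2]$, and a short case split on the sign of $q_{\mathrm h}(b-a)$ shows $[s_1,s_2]$ contains $0$ or $1$ (if $Q$ opens upward then $0<s_1<1<s_2$; if downward then $s_1<0<s_2<1$). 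This contradicts $F(0)<0$ or $F(1)<0$, proving the dichotomy. Within this argument the two slightly delicate points are the genericity choice of $a,b$ and the final case split, but both are routine.

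The genuinely nonobvious ingredient is the dichotomy itself --- no sum-of-squares or monotonicity argument makes it evident in advance, and the hypothesis that $q$ has degree exactly $2$ is essential (it fails already for $n=1$, $q=x$, $f=1-x^2$). A natural alternative route --- homogenize $f,q$ to quadratic forms $\tilde f,\tilde q$ on $\re^{n+1}$, apply the homogeneous S-Lemma with equality constraint of Luo, Sturm and Zhang to get $\tilde f-t\tilde q\succeq 0$, then set $x_0=1$ --- runs into exactly the obstacle this plan sidesteps: one must verify $\tilde f\ge 0$ on the entire cone $\{\tilde q=0\}$, including its part at infinity $\{x_0=0\}$, i.e.\ that $q_{\mathrm h}(w)=0\Rightarrow f_{\mathrm h}(w)\ge 0$ for the degree-$2$ parts, which requires a limiting argument along $E(q)$ that is awkward for degenerate quadrics. (If one prefers that route, the homogeneous S-Lemma needed there can be obtained from Proposition~\ref{prop:SZ-decomp}, or directly from the concavity of $\lambda\mapsto\lambda_{\min}(\tilde f+\lambda\tilde q)$ and the indefiniteness of $\tilde q$.)
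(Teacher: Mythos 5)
Your proof is correct, and it takes a genuinely different route from the paper's. The paper proves the result from scratch: it first treats the homogeneous case by showing that the cone $\mc{E}=\{S+tQ: S=S^T\succeq 0,\ t\in\re\}$ is closed (using the indefiniteness of $Q$ to bound the multipliers $t_k$), separating $F$ from $\mc{E}$ if $F\notin\mc{E}$, and invoking the Sturm--Zhang decomposition (Proposition~\ref{prop:SZ-decomp}) to manufacture a point of the cone $\{\tilde q=0\}$ where $\tilde f<0$; it then homogenizes and handles the part at infinity by the limiting argument along $E(q)$ that you correctly identify as the delicate step. You instead reduce to the inequality-form S-Lemma (Proposition~\ref{prop:S-lema}, which the paper quotes without proof) via the dichotomy that $f\ge 0$ on $E(q)$ with $q$ indefinite forces $f\ge 0$ on all of $\{q\ge 0\}$ or all of $\{q\le 0\}$; your proof of the dichotomy by restricting to a generic line through one bad point in each region and analyzing the two univariate quadratics $Q$ and $F$ (two distinct real roots of $Q$, forced concavity of $F$, and the case split on which of $0,1$ lies in $[s_1,s_2]$) is complete and checks out. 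What your route buys is elementarity --- no separation in matrix space, no closedness argument, and no homogenization --- at the cost of importing Yakubovich's S-Lemma as a black box, whereas the paper's machinery is self-contained modulo Proposition~\ref{prop:SZ-decomp} and is reused in the proof of Proposition 4.4. Your observation that the statement requires $q$ to have nonvanishing degree-$2$ part (your example $q=x$, $f=1-x^2$) is a fair point about the paper's phrasing; note that the paper's own Step~2 also tacitly needs $q_2\not\equiv 0$ in order to choose the sequence $u^{(i)}$ with $q_2(u^{(i)})\ne 0$, so both arguments rest on the same implicit reading of ``quadratic.''
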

\begin{proof}
{\bf Step~1}\quad
Consider first the case when both
$f$ and $g$ are homogeneous quadratics.  From \reff{p(x)=x^TPx}, we may
 write $f(x)=x^TFx$ and $q(x)=x^TQx$ for symmetric matrices
 $F,Q \in \mathbb{M}_{n}$.
%**************NEED TO GIVE THE SETTING FOR USING THE HAHN-BANACH
%THEOREM BELOW.  PLEASE DOUBLE-CHECK THIS****************
In the sequel we view  $\mathbb{M}_{n}$ as a locally convex normed
real vector space, with norm induced by the Frobenius inner product.
By finite dimensionality,
each linear functional on   $\mathbb{M}_{n}$ is of the
form $F \longrightarrow F \bullet X$ for some $X\in  \mathbb{M}_{n}$.
Let $\mc{E} = \{ S + t Q: S^T=S\succeq 0, t \in \re \}$.
Obviously $\mc{E}$ is a convex set, and we claim that
$\mc{E}$ is also closed.
To see this,  let $\{A_k \equiv S_k + t_k Q\} \subset \mc{E}$
be sequence such that $A_k \to A$. Note that every $S_k\succeq 0$ and thus
\[
\xi^TA_k\xi = \xi^TS_k\xi + t_k \xi^T Q \xi \geq t_k \xi^T Q \xi, \quad
\zeta^TA_k\zeta = \zeta^TS_k\zeta + t_k \zeta^T Q \zeta \geq t_k \zeta^T Q \zeta.
\]
From this, and the hypothesis
$\xi^TQ\xi = q(\xi)>0>q(\zeta)=\zeta^TQ\zeta$,
it follows that
\[
\frac{\zeta^TA_k\zeta}{q(\zeta)}  \leq  t_k \leq \frac{\xi^TA_k\xi}{q(\xi)}.
\]
%(where $a_{k}$ is the homogeneous quadratic corresponding to $A_{k}$
%via \reff{p(x)=x^TPx}).
Since $\{A_{k}\}$ is bounded, $\{\zeta^TA_k\zeta\}$ and $\{\xi^TA_k\xi\}$ are also bounded,
whence the sequence $\{t_k\}$ is bounded too. Thus $\{S_k\}$ is also bounded,
so we may assume $S_k \to S_* \succeq 0$ and $t_k \to t_*$, whence
$A_k \to A = S_* + t_* Q \in \mc{E}$.

Now we show that $F$ belongs to the closed convex set $\mc{E}$.
 Suppose to the contrary that  $F \not\in \mc{E}$.
 It follows from a version of the
 Hahn-Banach Theorem \cite[Proposition 14.15]{BP}
that there exist a nonzero symmetric matrix $X$ and a scalar $\eta$ such that
\[
F\bullet X < \eta, \quad
(S+t Q) \bullet X \geq \eta, \, \forall \,\, S^T=S \succeq 0,~ t\in \re.
\]
By choosing $S=0$, we see that $Q \bullet X =0$. Thus $S\bullet X\ge \eta
~~
\forall \, S^T=S\succeq 0$, whence  $X \succeq 0$.
The preceding implies that
\[
Q \bullet X =0, \quad  X \succeq 0, \quad \eta \leq 0.
\]
Then, by Proposition~\ref{prop:SZ-decomp},
there exist vectors $u_1, \ldots, u_r$ such that
\[
X=u_1u_1^T+\cdots+u_ru_r^T, \quad
u_i^TQu_i = 0\quad (1\le i \le r).
\]
From $ \sum_{i=1}^r u_i^T Fu_i = F\bullet X < \eta \leq 0$,
we see that at least one $u_i$ satisfies
\[
u_i^{T}Fu_i <0,~~ u_i^{T}Qu_i =0.
\]
Thus, $q(u_{i})=0$, but $f(u_{i})<0$,
which is a contradiction. So $F$ must belong to $\mc{E}$.
With $F= S + tQ$, for some $S^T=S\succeq 0$ and $t\in \re$,
 we have $f(x) = s(x) + tq(x)$
for some nonnegative quadratic $s(x)$
corresponding to $S$ via \reff{p(x)=x^TPx},
 so the result follows in this case.

\medskip
\noindent
{\bf Step~2}\quad
We next consider the case when at least one of $q$ and $f$ is non-homogeneous,
and without loss of generality in the following argument we may assume
both are non-homogeneous.
Since $E(q) \ne \emptyset$,
we may further assume that $q(0) = 0$
(for if $q(a) = 0$, we may replace $q$ and $f$ by $q(x+a)$ and $f(x+a)$).
Let $\tilde{q}(x_0,x) = x_0^2q(x/x_0)$ (resp. $\tilde{f}(x_0,x) = x_0^2f(x/x_0)$)
be the homogenization of $q(x)$ (resp. $f(x)$).
Denote $\tilde{x}^T := [x_0 \quad x^T]^T$,
and note that
\[
\tilde{f}(\tilde{x}) = x_0^2 f_0 + x_0 f_1(x) + f_2(x), \quad
\tilde{q}(\tilde{x}) =  x_0 q_1(x) + q_2(x),
\]
where every $f_i$ and $q_i$ are homogeneous of degree $i$.

Now we claim that
\be  \label{clm:QE-inf}
\tilde{f}(\tilde{x}) \geq 0, \quad \forall \, \tilde{x}: \, \tilde{q}(\tilde{x}) = 0.
\ee
From the hypothesis that $f(x)\ge 0$ whenvever $q(x)=0$,
(4.6) follows easily from the homogenization formulas when $x_0 \ne 0$.
For the case when $x_0=0$, we need to prove
\[
f_2(x) \geq 0, \quad \forall \, x: \, q_2(x) = 0.
\]
Let $u$ be an arbitrary point such that $q_2(u)=0$.
Consider the equation
\[
\tilde{q}(\eps, x) = \eps q_1(x) + q_2(x) = 0.
\]
If $q_1(u)=0$, then $q(\af u) =0$ for all real $\af$.
Thus $\af u \in E(q)$ and $f(\af u) \geq 0$ for all $\af$,
which implies $f_2(u) \geq 0$.
If $q_1(u) \ne 0$, then the rational function
\[
\eps(x)= - \frac{q_2(x)}{q_1(x)}
\]
is continuous in a neighborhood $\mc{O}_u$ of $u$.
Choose a sequence $\{u^{(i)}\} \subset \mc{O}_u$ such that
$q_2(u^{(i)}) \ne 0$ and $u^{(i)}\to u$.
Then $\eps(u^{(i)}) \ne 0$ and $\eps(u^{(i)})\to 0$.
 Since $\tilde{q}(\eps(u^{(i)}),u^{(i)})=
  \eps(u^{(i)})q_{1}(u^{(i)}) + q_{2}(u^{(i)}) = 0$,
  it follows that $q( \frac{u^{(i)}}{\eps(u^{(i)})} ) = 0$.
The hypothesis now implies that
$f( \frac{u^{(i)}}{\eps(u^{(i)})} ) \ge 0$, whence
$\tilde{f}(\eps(u^{(i)}),u^{(i)})\geq 0$.
Letting $i\to \infty$, we get
$f_2(u) = \tilde{f}(0,u)\geq 0$.
Therefore,  claim \reff{clm:QE-inf} is proved.
The existence of $\xi,\zeta\in \re^n$ such that $q(\xi)>0>q(\zeta)$ implies that
$\tilde{q}(1,\xi)>0>\tilde{q}(1,\zeta)$.
Now the homogeneous case can be applied to yield $t\in \re$ such that
$\tilde{f}(x_{0},x) -t\tilde{q}(x_{0},x)\ge 0 \quad \forall~ (x_{0},x)\in \re^{n+1}$,
and the result follows by setting $x_{0}=1$.
\end{proof}

In Proposition~\ref{prop:S-lm-E(q)},
if there do not exist $\xi,\zeta\in \re^n$ such that $q(\xi)>0>q(\zeta)$,
then the conclusion might fail.
For instance, for polynomials $f(x) = x_1x_2$ and $q(x) = -x_1^2$,
the summation $f(x) - t q(x)$ is never globally nonnegative
for any scalar $t$.
However, Proposition~\ref{prop:S-lm-E(q)}
can be weakened as follows.

\begin{prop} \label{prop:F+eps}
Let $f(x),q(x)$ be two quadratic polynomials. \\
%and suppose $q(x)$ is not a constant. \\
%Assume $S(q) \ne \emptyset$ and $q$ strictly concave.
(a)
If $S(q) \ne \emptyset$ and $f(x) \geq 0$ for all $x\in S(q)$,
then for any $\eps >0$ there exists $t\geq 0$ such that
\[
f(x) + \eps (1+\|x\|_2^2)  - t q(x) \geq 0, \quad \forall \,x \in \re^n.
\]
(b)
If $E(q) \ne \emptyset$ and $f(x) \geq 0$ for all $x\in E(q)$,
then for any $\eps >0$ there exists $t \in \re $ such that
\[
f(x) + \eps (1+\|x\|_2^2)  - t q(x) \geq 0, \quad \forall \,x \in \re^n.
\]
\end{prop}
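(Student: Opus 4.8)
The plan is to dispose of the ``generic'' situation by appealing to the S-lemma results already proved, and to handle the degenerate situation separately by homogenization. Fix $\eps>0$ and set $f_\eps(x):=f(x)+\eps(1+\|x\|_2^2)$; this is a quadratic polynomial with $f_\eps\ge f$ on $\re^n$, and $f_\eps(x)\ge\eps>0$ at every point with $f(x)\ge0$, so $f_\eps\ge0$ on $S(q)$ in part~(a) and on $E(q)$ in part~(b). If, in part~(a), $q$ takes a strictly positive value somewhere, then Proposition~\ref{prop:S-lema} applied to $(f_\eps,q)$ gives $t\ge0$ with $f_\eps-tq\ge0$ on $\re^n$, which is the assertion; if, in part~(b), $q$ takes both a strictly positive and a strictly negative value, then Proposition~\ref{prop:S-lm-E(q)} applied to $(f_\eps,q)$ gives $t\in\re$ with $f_\eps-tq\ge0$. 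It therefore remains to treat the cases where those propositions do not apply: $q\le0$ on $\re^n$ in part~(a), and $q$ one-signed on $\re^n$ (that is, $q\ge0$ everywhere or $q\le0$ everywhere, possibly $q\equiv0$) in part~(b). In all of these $E(q)=S(q)$, so the hypothesis in force reads $E(q)\ne\emptyset$ and $f|_{E(q)}\ge0$, and both parts reduce to the single statement: \emph{if $q$ is one-signed on $\re^n$, $E(q)\ne\emptyset$, and $f|_{E(q)}\ge0$, then some $t\in\re$ --- which may be taken $\ge0$ when $q\le0$ --- satisfies $f_\eps-tq\ge0$ on $\re^n$.}

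To prove this I would homogenize. For a quadratic $g$, write $\widetilde g(x_0,x):=x_0^2 g(x/x_0)$ for its homogenization, a quadratic form on $\re^{n+1}$ with symmetric matrix $\widehat g$; recall $g\ge0$ on $\re^n$ iff $\widehat g\succeq0$, and in that case $\ker\widehat g=\{z:\widetilde g(z)=0\}$. If $q\equiv0$ take $t=0$, since $f_\eps>0$ everywhere. Otherwise assume $q\le0$ on $\re^n$ (the case $q\ge0$ is symmetric under $q\mapsto-q$, $t\mapsto-t$), so that $\widehat{-q}\succeq0$ is nonzero; put $W:=\ker\widehat{-q}=\ker\widehat q$. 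The crux is that $\widehat{f_\eps}$ is positive definite on $W$. Granting this, the standard fact that a symmetric matrix positive definite on the kernel of a positive semidefinite symmetric matrix becomes positive definite once a large enough multiple of the latter is added --- proved by contradiction, since unit vectors $z_t$ with $\langle(\widehat{f_\eps}+t\,\widehat{-q})z_t,z_t\rangle\le0$ would force $\langle\widehat{-q}z_t,z_t\rangle\le\|\widehat{f_\eps}\|/t\to0$, so a subsequential limit $z_*$ is a unit vector in $W$ with $\langle\widehat{f_\eps}z_*,z_*\rangle\le0$, contradicting the crux --- yields some $t_0$ with $\widehat{f_\eps}+t\,\widehat{-q}\succ0$ for all $t\ge t_0$. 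Taking $t\ge\max(t_0,0)$, this says $f_\eps(x)-tq(x)\ge0$ for all $x\in\re^n$ (set $x_0=1$), with $t\ge0$, as required.

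It remains to establish the crux. Let $0\ne z=(x_0,x)\in W$, so $\widetilde q(z)=0$. If $x_0\ne0$, then $x_0^2 q(x/x_0)=\widetilde q(z)=0$, so $x/x_0\in E(q)$ and $\widetilde{f_\eps}(z)=x_0^2 f_\eps(x/x_0)\ge x_0^2\eps>0$. If $x_0=0$, then $x\ne0$, and $\widetilde q(0,x)$, $\widetilde{f_\eps}(0,x)$ are the values at $x$ of the leading quadratic forms of $q$ and of $f_\eps$; the latter equals $f^{(2)}(x)+\eps\|x\|_2^2$, where $f^{(2)}$ is the leading form of $f$. Since $\widetilde q(0,x)=0$, for any fixed $v\in E(q)$ the function $s\mapsto q(v+sx)$ has vanishing $s^2$-coefficient and vanishes at $s=0$, hence is affine in $s$; being one-signed in $s$, it is identically zero, so $v+sx\in E(q)$ and $f(v+sx)\ge0$ for all $s\in\re$. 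A quadratic in $s$ nonnegative on all of $\re$ has nonnegative leading coefficient, so $f^{(2)}(x)\ge0$, whence $\widetilde{f_\eps}(0,x)\ge\eps\|x\|_2^2>0$. Thus $\widehat{f_\eps}$ is positive definite on $W$.

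I expect this degenerate case --- $q$ failing to change sign --- to be the only real obstacle, since that is exactly where Propositions~\ref{prop:S-lema} and~\ref{prop:S-lm-E(q)} break down. The sole purpose of the perturbation $\eps(1+\|x\|_2^2)$ is to upgrade the bare nonnegativity of $\widehat f$ on $W$ --- which alone is not enough, as the example $f=x_1x_2$, $q=-x_1^2$ shows --- to strict positivity of $\widehat{f_\eps}$ on $W$, after which a sufficiently large multiple of the semidefinite form $-q$ can be added to restore global nonnegativity.
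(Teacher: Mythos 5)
Your proof is correct, and while the first reduction coincides with the paper's, your handling of the hard case is genuinely different. Like the paper, you dispose of the ``generic'' situations by invoking Proposition~\ref{prop:S-lema} (for (a), when $q$ takes a positive value) and Proposition~\ref{prop:S-lm-E(q)} (for (b), when $q$ changes sign), leaving the degenerate case where $q$ is one-signed. There the paper argues \emph{dually}: it forms the convex set $\mc{E}=\{S+tQ\}$, assumes $F+\eps I_{n+1}\notin\mc{E}$, separates by a symmetric matrix $X\succeq 0$, applies the Sturm--Zhang decomposition $X=\sum u_iu_i^T$ of Proposition~\ref{prop:SZ-decomp}, and reaches a contradiction by a case analysis on whether the leading coordinate $\tau_i$ of each atom $u_i$ vanishes. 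You argue \emph{primally}: homogenize, observe that one-signedness of $q$ makes $\widehat{-q}$ (or $\widehat{q}$) positive semidefinite, show $\widehat{f_\eps}$ is positive definite on $\ker\widehat{q}$, and add a large multiple of $\widehat{-q}$ via the standard Finsler-type compactness lemma. The two case analyses are close cousins --- your split $x_0\ne 0$ versus $x_0=0$ mirrors the paper's $\tau_i\ne 0$ versus $\tau_i=0$, and in both the sole function of the perturbation $\eps(1+\|x\|_2^2)$ is to upgrade nonnegativity to strict positivity on the relevant kernel directions --- but your route avoids both the Hahn--Banach separation and the Sturm--Zhang decomposition entirely, is more elementary, and in fact delivers the slightly stronger conclusion $f_\eps-tq>0$ pointwise on $\re^n$. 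What the paper's dual scheme buys in exchange is uniformity: the same separation-plus-decomposition template is reused in Proposition~\ref{prop:S-lm-E(q)} and again in Theorem~\ref{thm:2mom-compact}, whereas your primal argument is tailored to the one-signed case (it needs $\pm\widehat{q}\succeq 0$) and could not replace those other proofs.
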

\begin{proof}
As in \reff{eq:p-Qrep}, write $f(x)$ and $q(x)$ as
\[
f(x) = \bbm 1 \\ x \ebm^T
\underbrace{\bbm f_0 & f_1^T \\ f_1 & F_2 \ebm}_{F} \bbm 1 \\ x \ebm, \quad
q(x) = \bbm 1 \\ x \ebm^T
\underbrace{\bbm q_0 & q_1^T \\ q_1 & Q_2 \ebm}_{Q} \bbm 1 \\ x \ebm.
\]

(a) If there exists $\xi\in \re^n$ such that $q(\xi)>0$, then we are done by
Proposition~\ref{prop:S-lema}.
So we need only consider the case when $q(x)\leq 0$ for every $x\in \re^n$.
Since $S(q) \ne \emptyset$,
 without loss of generality we may assume that the origin belongs to $S(q)$,
which implies that $q_0 = 0$. Let
\[
\mc{E} = \{ S + t Q:\, S^T=S\succeq 0, t \geq 0 \}.
\]
Note that $\mc{E}$ is a convex set (but not necessarily closed).
We claim that for each $\eps >0$,
\[
F(\eps) :=  F + \eps I_{n+1} = \bbm f_0 +
\eps & f_1^T \\ f_1 & F_2 + \eps I_n \ebm \in \mc{E}.
\]
 Suppose to the contrary that $F(\eps) \not\in \mc{E}$ for some
 $\eps > 0$.
Then as in the proof of Proposition~\ref{prop:S-lm-E(q)},
there exist a nonzero symmetric matrix $X$ and a scalar $\eta$ such that
\[
F(\eps) \bullet X \leq \eta, \quad
(S+t Q)  \bullet X \geq \eta, \, \forall \, S\succeq 0, \, \forall \, t \geq 0.
\]
The above implies that
\[
Q \bullet X \geq 0, \quad  X \succeq 0, \quad \eta \leq 0.
\]
Then, by Proposition~\ref{prop:SZ-decomp},
there exist nonzero vectors $u_1, \ldots, u_r$ such that
\[
X=u_1 u_1^T+\cdots+u_r u_r^T, \quad
u_i^T Qu_i = \frac{Q \bullet X }{r} \geq  0 \quad (1\le i\le n).
\]
Write every $u_i$ as
\[
u_i = \bbm \tau_i \\ v_i \ebm, \quad \tau_i \in \re, ~~v_{i}\in \re^{n}.
\]
Order $u_i$ such that
$\tau_i \ne 0$ ($1\le i\le k$), and $\tau_{k+1} = \cdots =\tau_r = 0$
(the nonzero terms may be absent, or the zero terms may be absent).
For every $i=1,\ldots,k$ if $k>0$, we have
\[
 \tau_i^2 q(v_i/\tau_i) = u_i^TQu_i \geq 0.
\]
Thus every $v_i/\tau_i \in S(q)$
and hence $f( v_i / \tau_i) \geq 0$.
For every $i=k+1,\ldots,r$, we have
\[
 v_i^TQ_2v_i     =   u_i^TQu_i        \geq 0.
\]
Then we must have $v_i^TQ_2v_i =0$, because otherwise
$q(\af v_i) >0$ for $\af>0$ big enough
contradicts the assumption that $q(x)\leq 0$ for all $x\in\re^n$
at the beginning. So
\[
q(\af v_i) = 2 \af q_1^T v_i.
\]
Replacing $u_{i}$ by $-u_{i}$ if necessary, we may assume that
$q_1^Tv_i \geq 0$.
So $q(\af v_i ) \geq 0$ and $\af v_i \in S(q)$ for all $\af >0$.
Then $f(\af v_i)\geq 0$ for all $\af >0$, and hence $v_i^TF_2v_i \geq 0$.
So we have
\begin{align*}
& F(\eps) \bullet X  = \sum_{i=1}^r u_iF(\eps)u_i \\
& = \sum_{i=1}^k \tau_i^2 (f(v_i/\tau_i)+\eps (1+\|v_i/\tau_i\|_2^2)) +
\sum_{i=k+1}^r ( v_i^TF_2v_i+\eps  \|v_i\|_2^2)  \\
& \geq \sum_{i=1}^k \tau_{i}^{2} \eps  +
\sum_{i=k+1}^r  \eps \|v_i\|_2^2.
\end{align*}
Since every $u_i$ is nonzero, we have either $\tau_i >0$
or $v_i \ne 0$. Thus we must have
\[
F(\eps) \bullet X >0,
\]
which contradicts that $F(\eps) \bullet X \leq \eta \leq 0$.
So $F(\eps)$ must belong to $\mc{E}$, and the result follows.

(b) If there exist $\xi, \zeta$ such that $ q(\xi) >0 > q(\zeta)$,
then we are done by applying Proposition~\ref{prop:S-lm-E(q)}.
Replacing $q$ by $-q$ if necessary, we may thus assume that
 $q(x)\geq 0$ for all $x\in \re^n$.
Let us recall the decomposition
$q(x) = q_{0} + 2q_1^Tx + x^TQ_{2}x$ given just before the proof of (a).
Since $E(q)\ne \emptyset$, we may assume that the origin
belongs to $E(q)$, i.e., $q_0=q(0)=0$.
Since we assumed $q(x)\geq 0$ for all $x\in \re^n$,
the origin is a minimizer of $q(x)$,
whence $\nabla q(0) = 0$. Thus it follows that
\[
q_1 = \half \nabla q(0) = 0.
\]
We now proceed to derive a contradiction similar to that used in
(a), but now we define
 $\mc{E}$ as
\[
\mc{E} = \{ S + t Q:\, S^T=S\succeq 0, t \in \re \}.
\]
As in part (a), if $F(\eps) \not\in \mc{E}$,
then there exist a nonzero symmetric matrix
$X$ and a scalar $\eta$ such that
\[
F(\eps) \bullet X \leq \eta, \quad
(S+t Q)  \bullet X \geq \eta, \, \forall \, S^T=S\succeq 0, \, \forall \, t \in \re,
\]
which implies
\[
Q \bullet X = 0, \quad  X \succeq 0, \quad \eta \leq 0.
\]
Again, applying Proposition~\ref{prop:SZ-decomp},
we get nonzero vectors $u_1, \ldots, u_r$ such that
\[
X=u_1u_1^T+\cdots+u_ru_r^T, \quad
u_1^TQu_1 = \cdots = u_r^TQu_r =  \frac{Q\bullet X}{r} = 0.
\]
As before, write  $u_i$ as
\[
u_i = \bbm \tau_i \\ v_i \ebm,
\]
and reorder the  $u_i$ so that
$\tau_{i} \ne 0$ ($1\le i\le k$), and $ \tau_{k+1} = \cdots =\tau_r = 0$.
For $i=1,\ldots,k$, we have
\[
 \tau_i^2 q(v_i/\tau_i)= u_i^TQu_i = 0,
\]
so $v_i/\tau_i \in E(q)$, and hence $f( v_i / \tau_i) \geq 0$.
For every $i=k+1,\ldots,r$, we have that for all $\af \in \re$,
\[
0 =  \af^2 u_i^TQu_i = \af^2 v_i^TQ_2v_i = q(\af v_i).
\]
Thus we get
\[
0\le f (\af v_i) = f_0 + 2 \af f_1^T v_i + \af^2 v_i^TF_2v_i, ~~\forall ~~ \af \in \re,
\]
whence $v_i^TF_2v_i \geq 0$ for $i=k+1,\ldots,r$.
As in part (a), we have
\begin{align*}
F(\eps) \bullet X
& = \sum_{i=1}^k \tau_i^2 (f(v_i/\tau_i)+\eps (1+\|v_i/\tau_i\|_2^2)) +
\sum_{i=k+1}^r ( v_i^TF_2v_i+\eps  \|v_i\|_2^2)  \\
& \geq \sum_{i=1}^k \tau_{i}^{2}\eps  +
\sum_{i=k+1}^r \eps \|v_i\|_2^2 >0,
\end{align*}
which contradicts  $F(\eps) \bullet X \leq 0$.
So $F(\eps)$ must belong to $\mc{E}$, and the result follows.
\end{proof}

\subsection{Quadratic moment problems}

We now apply the preceding results to quadratic moment problems. Recall
from \cite{CF96} that for $n=1,~2$, if $M_{1}(y)\succeq 0$, then
$M_{1}(y)$ has a flat extension, and thus
$y$ has admits a $rank~M_{1}(y)$-atomic representing measure.
We begin by generalizing the latter result to $n\ge 1$.

\begin{thm} \label{thm:deg2mom}
If $y \in \mc{M}_{n,2}$ and $M_1(y) \succeq 0$,
then $y$ has a $rank~M_1(y)$-atomic representing measure.
\end{thm}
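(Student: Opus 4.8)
The plan is to bypass flat extensions entirely and read off a $\rank M_1(y)$-atomic representing measure directly from a suitably structured rank-one decomposition of the positive semidefinite matrix $M_1(y)$. The observation driving this is that a $\rho$-atomic representing measure $\mu=\sum_{i=1}^{\rho}\lmd_i\delta_{v_i}$ (with $\lmd_i>0$ and the $v_i\in\re^n$ distinct) is precisely a decomposition $M_1(y)=\sum_{i=1}^{\rho}\lmd_i\,[v_i]_1[v_i]_1^{T}$, and that the minimal number of terms in any rank-one decomposition of a positive semidefinite matrix equals its rank. Normalizing $y_{00}=1$ as in the discussion preceding the theorem, set $r=\rank M_1(y)$ (so $r\ge 1$, since $r=0$ would force $y_{00}=0$).

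The key step is to invoke the Sturm--Zhang decomposition, Proposition~\ref{prop:SZ-decomp}, with $X=M_1(y)$ and with the specific choice $Q=e_1e_1^{T}$, where $e_1\in\re^{n+1}$ is the coordinate vector of the monomial $1$. Since $Q\bullet M_1(y)=(M_1(y))_{11}=y_{00}=1$, the proposition supplies nonzero vectors $u_1,\dots,u_r\in\re^{n+1}$ with $M_1(y)=u_1u_1^{T}+\cdots+u_ru_r^{T}$ and $(e_1^{T}u_i)^{2}=u_i^{T}Qu_i=\frac1r$ for every $i$; in particular each $u_i$ has nonzero leading coordinate. Writing $u_i=(s_i,t_i)$ with $s_i=e_1^{T}u_i\in\re\setminus\{0\}$ and $t_i\in\re^n$, I would set $v_i:=t_i/s_i$, so that $u_i=s_i[v_i]_1$ and $u_iu_i^{T}=\frac1r[v_i]_1[v_i]_1^{T}$. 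Summing gives $M_1(y)=\frac1r\sum_{i=1}^{r}[v_i]_1[v_i]_1^{T}$, which is exactly the degree-$2$ moment matrix of $\mu:=\frac1r\sum_{i=1}^{r}\delta_{v_i}$; since a moment matrix determines all moments of degree at most $2$, $\mu$ represents $y$.

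Finally I would verify that $\mu$ genuinely has $r$ atoms, i.e.\ that $v_1,\dots,v_r$ are distinct. This is a rank count: $M_1(y)$ has rank $r$ and is the sum of the $r$ rank-one matrices $u_iu_i^{T}$, forcing $u_1,\dots,u_r$ to be linearly independent, hence pairwise non-parallel; and since each $u_i$ is a nonzero multiple of $[v_i]_1$, while $[v]_1$ and $[v']_1$ are parallel only if $v=v'$, the $v_i$ are pairwise distinct. The only real content of the argument is the choice $Q=e_1e_1^{T}$: it is precisely what forces every rank-one piece of the Sturm--Zhang decomposition to have nonzero ``constant'' coordinate, so that it corresponds to a finite atom rather than a direction at infinity. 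Everything else is bookkeeping, and I do not anticipate a genuine obstacle. (An equivalent route: take any full-column-rank factorization $M_1(y)=VV^{T}$ and right-multiply $V$ by a generic orthogonal matrix, which makes every column acquire nonzero leading entry because the leading row of $V$ is a unit vector; the columns of the new factor then play the role of the $u_i$ above.)
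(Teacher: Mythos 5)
Your proof is correct and follows essentially the same route as the paper: both apply the Sturm--Zhang decomposition (Proposition~\ref{prop:SZ-decomp}) to $X=M_1(y)$ with an auxiliary symmetric matrix $Q$ chosen precisely so that every rank-one summand $u_iu_i^T$ is forced to have nonzero leading coordinate, after which the $r$-atomic measure is read off directly. The paper takes $Q=\mbox{diag}(1,-\alpha I_n)$ with $\alpha>0$ small (so that $u_i^TQu_i\ge 0$ gives $\tau_i^2\ge\alpha\|w_i\|^2$), whereas your choice $Q=e_1e_1^T$ accomplishes the same thing slightly more directly by pinning $(e_1^Tu_i)^2=1/r>0$; you also supply the check that the atoms are distinct, which the paper leaves implicit.
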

\begin{proof}
Without loss of generality, we may normalize $y$ so that $y_{0}=1$.
Write the moment matrix $M_1(y)$ as follows:
\[
M_1(y) = \bbm 1 & z^T \\ z & W \ebm,
\]
where $z\in \re^{n}$. Since $y_0 = 1$,
we can choose a number $\af>0$ small enough such that the matrix
\[
Q = \bbm 1 & 0 \\ 0 & -\af I_n \ebm
\]
satisfies $Q  \bullet M_1(y) \geq 0$.
Then, by Proposition~\ref{prop:SZ-decomp},
there exist nonzero (column)
vectors $u_1,\ldots,u_r\in \re^{n+1}$ ($r=rank~M_1(y)$)
such that
\[
M_1(y) = u_1u_1^T+\cdots+u_ru_r^T, \quad
u_1^TQu_1 = \cdots = u_r^TQu_r = \frac{Q\bullet M_1(y)}{r} \geq 0.
\]
Write the vectors $u_i$ as
\[
u_i = \bbm \tau_i \\ w_i \ebm,  \tau_i \in \re, w_i \in \re^n.
\]
Then $u_i^TQu_i \geq 0$ implies that $\tau_i^2 \geq \af \|w_i\|_2^2$.
So, if $\tau_i =0$, then $w_i=0$.
Note that $\|u\|_i^2 = \tau_i^2 + \|w_i\|_2^2$.
Since all $u_i$ are nonzero, every $\tau_i \ne 0$,
and hence we can write $u_i$ as
\[
u_i = \tau_i \bbm 1 \\ v_i \ebm,  v_i \in \re^n.
\]
Thus, we have
\be  \label{rep:r-atom}
M_1(y) = \tau_1^2 \bbm 1 \\ v_1 \ebm \bbm 1 \\ v_1 \ebm^T + \cdots +
\tau_r^2 \bbm 1 \\ v_r \ebm \bbm 1 \\ v_r \ebm^T.
\ee
%Setting $\tilde{v}_{i} = \frac{1}{\tau_{i}}v_{i}$, from \reff{rep:r-atom},
%we get the $r$-atomic representing measure
%$$ \mu = \sum_{i=1}^{r} \tau_{i}^{2}\delta_{\tilde{v}_{i}}.$$
The above gives a $r$-atomic representing measure for $y$.
\end{proof}

We pause to give an application of Theorem~\ref{thm:deg2mom}
to the multivariable degree one moment problem.
\begin{cor} A degree one multisequence  $y$ has
a representing measure if and only if $y_{0} > 0$.
\end{cor}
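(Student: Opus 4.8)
The plan is to prove both directions, handling the degree-one sequence $y=(y_0,y_{e_1},\dots,y_{e_n})$ directly and, for the substantive implication, invoking Theorem~\ref{thm:deg2mom}.

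For necessity, if $\mu$ is a representing measure for $y$, then evaluating the defining moment identity at the constant monomial $x^0\equiv 1$ gives $y_0=\int_{\re^n}1\,d\mu=\mu(\re^n)$, which is $\ge 0$ because $\mu$ is a positive Borel measure, and is $>0$ unless $\mu$ is the zero measure (equivalently, unless $y$ is the zero sequence). So any nontrivial degree-one sequence with a representing measure satisfies $y_0>0$.

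For sufficiency, assume $y_0>0$; after dividing $y$ by $y_0$ and, at the end, rescaling the resulting measure by $y_0$, we may take $y_0=1$. Put $z:=(y_{e_1},\dots,y_{e_n})^T\in\re^n$ and extend $y$ to a degree-two sequence $\tilde y$ by choosing the second-order moments to be $\tilde y_{e_i+e_j}:=z_iz_j+\delta_{ij}$; this is a legitimate moment-matrix extension because, at order one, the lower-right block of $M_1$ may be taken to be an arbitrary symmetric matrix. Then
\[
M_1(\tilde y)=\bbm 1 & z^T\\ z & zz^T+I_n\ebm
=\bbm 1\\ z\ebm\bbm 1\\ z\ebm^T+\bbm 0&0\\ 0&I_n\ebm\succeq 0,
\]
so Theorem~\ref{thm:deg2mom} yields a (finitely atomic) representing measure $\mu$ for $\tilde y$. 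Since $\tilde y$ agrees with $y$ in every moment of degree $\le 1$, $\mu$ represents $y$ as well, which completes the proof.

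I do not anticipate any genuine obstacle: this is a short corollary of Theorem~\ref{thm:deg2mom}. The only items needing a line of care are the degenerate case $y_0=0$ in the necessity argument (where any representing measure is forced to vanish) and the elementary fact that a positive semidefinite degree-two extension exists for every degree-one sequence with $y_0>0$. One could also bypass Theorem~\ref{thm:deg2mom} and simply exhibit the one-atom measure $y_0\,\delta_{z/y_0}$, but routing the argument through Theorem~\ref{thm:deg2mom} keeps it within the framework of this section.
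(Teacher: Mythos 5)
Your proof is correct and follows essentially the same route as the paper: extend $y$ to a degree-two sequence whose moment matrix $M_1(\tilde y)$ is positive semidefinite and invoke Theorem~\ref{thm:deg2mom}. The paper's (very terse) proof uses the rank-one extension $vv^T$, where $v$ is the moment vector of $y$ --- which, after rescaling, amounts to the one-atom measure you mention as an alternative --- while your choice $zz^T+I_n$ works just as well; you also spell out the easy necessity direction that the paper leaves implicit.
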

\begin{proof} Note that if $v$ denotes the vector of
moments in $y$, in degree-lexicographic order, then
$v^{T}v$ has the form of a positive moment matrix $M_{1}$,
so the existence of a representing measure follows from
 Theorem~\ref{thm:deg2mom} .
\end{proof}

We next turn to the quadratic $K$-moment problem where
$q$ is a quadratic polynomial and $K= E(q)$ or $K=S(q)$.
For the case when $n = 2$ and
$q(x) = 1 - \|x\|_{2}^{2}$, it is known that the conditions
$M_{1}(y) \succeq 0$ and $L_{y}(q) = 0$ (resp., $L_{y}\ge 0$)
imply the existence of representing measures supported
in $E(q)$ \cite[Theorem~3.1]{CF00}  (resp., $S(q)$
\cite[Theorem 1.8]{CF00}).  This can be generalized
to $n\ge 1$ and $S(q)$ compact.

\begin{thm}  \label{thm:2mom-compact}
Suppose $q(x)$ is quadratic and $S(q)$ is compact and nonempty. \\
(a) $y \in \mc{M}_{n,2}$ has a representing measure
supported in $E(q)$ if and only if
\[
M_1(y) \succeq 0, \quad L_y(q) = 0.
\]
(b) $y \in \mc{M}_{n,2}$ has a representing measure
supported in $S(q)$ if and only if
\[
M_1(y) \succeq 0, \quad L_y(q) \geq 0.
\]
\end{thm}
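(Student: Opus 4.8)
\medskip
\noindent
The plan is to reduce sufficiency in both parts to the single assertion that the Riesz functional $L_y$ is $K$-positive (with $K=E(q)$ in (a), $K=S(q)$ in (b)), and then to cash this in using the compactness of $K$. Necessity is immediate: a $K$-representing measure $\mu$ for $y$ gives $M_1(y)=\int [x]_1[x]_1^T\,d\mu\succeq 0$, and $L_y(q)=\int_K q\,d\mu$, which is $0$ since $q\equiv 0$ on $E(q)$, resp. $\geq 0$ since $q\geq 0$ on $S(q)$. For sufficiency, once $L_y$ is shown to be $K$-positive, the compactness of $K$ lets me invoke the proof of Tchakaloff's theorem \cite{Tch}, which shows $K$-positivity is sufficient for the existence of a $K$-representing measure; this is precisely where the hypothesis ``$S(q)$ compact'' is used. (Without it, the same $K$-positivity argument below, combined with Theorem~\ref{thm:pos=>clos-rep}, only places $y$ in $\overline{\mc{R}_{n,2}(K)}$.) Note that Theorem~\ref{thm:strpos=>rep} is unavailable here: $E(q)$ and $S(q)$ need not be determining sets of degree $2$ --- indeed $q$ itself lies in $\mc{P}_2$ and vanishes identically on $E(q)$ --- and $L_y$ need not be strictly $K$-positive.

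The core step is the $K$-positivity claim, which I would treat uniformly. Fix $p\in\mc{P}_2$ with $p|_K\geq 0$. In case (b), Proposition~\ref{prop:F+eps}(a) furnishes, for every $\eps>0$, a scalar $t_\eps\geq 0$ such that $p(x)+\eps(1+\|x\|_2^2)-t_\eps q(x)\geq 0$ on all of $\re^n$; in case (a), Proposition~\ref{prop:F+eps}(b) does the same with $t_\eps\in\re$ of arbitrary sign. By \reff{p=[]1^TP[]1} this nonnegative quadratic equals $[x]_1^T P_\eps[x]_1$ for some $P_\eps=P_\eps^T\succeq 0$, and then \reff{Lyp=PdotM1(y)}, \reff{PSDdot>=0} and $M_1(y)\succeq 0$ yield
\[
L_y(p)+\eps\, L_y(1+\|x\|_2^2)-t_\eps\, L_y(q)=P_\eps\bullet M_1(y)\geq 0 .
\]
In case (b) the subtracted term is $\geq 0$ because $t_\eps\geq 0$ and $L_y(q)\geq 0$; in case (a) it is $0$ because $L_y(q)=0$. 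Either way $L_y(p)\geq -\eps\, L_y(1+\|x\|_2^2)$, and since $L_y(1+\|x\|_2^2)=\mathrm{trace}\,M_1(y)$ is a fixed nonnegative constant, letting $\eps\to 0^+$ gives $L_y(p)\geq 0$.

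A couple of loose ends remain. For (a) one should observe that $E(q)$ is nonempty --- $S(q)$ compact and nonempty rules out $q$ being everywhere positive (else $S(q)=\re^n$) or everywhere negative (else $S(q)=\emptyset$), so $q$ has a real zero --- and compact, being closed in $S(q)$. The degenerate case $y_0=0$ also needs a word: then $M_1(y)\succeq 0$ forces its first row and column to vanish, and applying the $K$-positivity just proved to $p(x)=R^2+1-\|x\|_2^2$, where $S(q)\subseteq\{\|x\|_2\leq R\}$, forces $\mathrm{trace}\,M_1(y)\leq 0$, hence $M_1(y)=0$ and $y=0$, which the zero measure represents. I expect the genuine difficulty to lie in the S-lemma / Sturm--Zhang machinery already established in Propositions~\ref{prop:SZ-decomp}--\ref{prop:F+eps}; within the present proof the one delicate point is that the multipliers $t_\eps$ may be unbounded as $\eps\to 0$, which is exactly why I keep the regularizer $\eps(1+\|x\|_2^2)$ and pass to the limit instead of trying to produce a single global certificate for $p$.
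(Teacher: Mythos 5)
Your proof is correct, but it takes a genuinely different route from the paper's. The paper proves sufficiency directly and constructively: writing $q(x)=[x]_1^TQ[x]_1$, it first shows that compactness of $S(q)$ forces the quadratic part $Q_2$ to be negative definite, then applies the Sturm--Zhang decomposition (Proposition~\ref{prop:SZ-decomp}) to $X=M_1(y)$ itself with this $Q$, obtaining $M_1(y)=\sum u_iu_i^T$ with $u_i^TQu_i=Q\bullet X/r$ equal to $0$ (resp.\ $\geq 0$). The sole role of compactness there is that $Q_2\prec 0$ rules out atoms with vanishing first coordinate, so each $u_i$ normalizes to $\tau_i[1\ \ v_i^T]^T$ with $v_i\in E(q)$ (resp.\ $S(q)$), yielding an explicit $\mathrm{rank}\,M_1(y)$-atomic representing measure; Propositions~\ref{prop:S-lema}--\ref{prop:F+eps} are not used at all. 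You instead establish $K$-positivity of $L_y$ via the regularized S-lemma (Proposition~\ref{prop:F+eps}) and then invoke the Tchakaloff-type fact, asserted in the introduction, that $K$-positivity suffices for compact $K$; your argument is essentially the proof of Theorem~\ref{2mom-clos} with compactness cashed in at the end. Your loose ends (nonemptiness and compactness of $E(q)$, the $y_0=0$ degeneracy, $L_y(1+\|x\|_2^2)=\mathrm{trace}\,M_1(y)$) are all handled correctly, and the point about keeping the $\eps$-regularizer because the multipliers $t_\eps$ may blow up is well taken. The trade-off: the paper's argument is more elementary and self-contained and produces the minimal finitely atomic measure explicitly, whereas yours is purely existential and leans on heavier machinery, but it treats (a) and (b) uniformly and is exactly the template that survives (in weakened, closure form) when compactness is dropped.
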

\begin{proof}
%So without loss of generality,
%*****************REFERENCE FOR THIS REDUCTION NEEDED***************
%after some affine coordinate transformations,
%we can assume that $q(x) = 1 - \|x\|_2^2$.
%Let $Q$ be the symmetric matrix
%\[
%Q = \bbm 1 & 0 \\ 0 & -I_n \ebm.
%\]
We write $q(x)$ as
\be \label{eq:q(x)dcomp}
q(x) = q_0 + 2q_1^Tx + x^TQ_2x =
\bbm 1 \\ x \ebm^T
\underbrace{\bbm q_0 & q_1^T \\ q_1 & Q_2 \ebm}_{Q} \bbm 1 \\ x \ebm.
\ee
Since $S(q)$ is nonempty, we can assume $0\in S(q)$, i.e., $q_0\geq 0$,
without loss of generality.
From the compactness of $S(q)$, we know $q(x)$ must be strictly concave, that is,
$Q_2$ must be negative definite ($Q_2\prec 0$).
To see this, suppose otherwise, i.e., that $Q_2$ is not negative definite.
Then there exists a nonzero $u \in \re^n$ such that $u^TQ_2u\geq 0$.
We can also further choose $u$ so that $q_1^Tu\geq 0$ (otherwise replace $u$ by $-u$).
Thus, for any $t>0$, we have $q(tu)\geq 0$, which implies
$S(q)$ is unbounded. However, this contradicts the compactness of $S(q)$.
Therefore, $Q_2$ must be negative definite.

(a)
We need only prove the sufficiency direction.
Suppose $y \in \mc{M}_{n,2}$ and let $X=M_1(y)$.
Then we have
\[
X\succeq 0, \quad Q \bullet X = L_y(q) = 0.
\]
By Proposition~\ref{prop:SZ-decomp},
there exist nonzero vectors $u_1,\ldots, u_r \in \re^{n+1}$ such that
\[
X = \sum_{i=1}^r u_iu_i^T, \quad
u_1^TQu_1 = \cdots = u_r^TQu_r = \frac{Q \bullet X}{r} = 0.
\]
Write $u_i = \bbm \tau_i & w_i^T \ebm^T$ for some scalar $\tau_{i}$
and some vector $w_i \in \re^n$.
Then $u_i^TQu_i = 0$ implies that
\be \label{eq:w_iQw_i=0}
q_0 \tau_i^2 + 2 \tau_i q_1^Tw_i + w_i^TQ_2w_i =0.
\ee
If $\tau_i=0$ for some $i$, then $w_i^TQ_2w_i=0$, and hence $w_i=0$
because of negative definiteness of $Q_2$.
Since $u_i$ is nonzero,
it follows that every $\tau_i \ne 0$, and we can write
$u_i = \tau_i [1 \,\,  v_i^T]^T$. \reff{eq:q(x)dcomp} and \reff{eq:w_iQw_i=0}
now imply that $q(v_i) = 0$, so $v_i \in E(q)$.
%But this is impossible since otherwise $u_i=[\tau_i\,\,\, w_i^T ]^T$ is a zero vector.
%Hence every $\tau_i\ne 0$, and we can write
% $u_i = \tau_{i} \bbm 1 & v_i^T \ebm^T$.
%Then $q(u_i)/\tau_i^2 = 0$ implies $q(v_i)= 0$ and $v_i \in E(q)$.
Therefore, we have
\[
M_1(y) = \tau_{1}^{2} \bbm 1 \\ v_1 \ebm \bbm 1 \\ v_1 \ebm^T + \cdots +
\tau_{r}^{2} \bbm 1 \\ v_r \ebm \bbm 1 \\ v_r \ebm^T,
\]
and it follows that $\mu \equiv \sum_{i=1}^{r} \tau_{i}^{2} \delta_{v_{i}}$
is a representing measure for $y$ supported in $E(q)$.

(b)The proof is very similar to part (a).
Suppose $y \in \mc{M}_{n,2}$ and
let $X=M_1(y)$. Then
\[
X\succeq 0, \quad Q \bullet X = L_y(q) \geq 0.
\]
By Proposition~\ref{prop:SZ-decomp},
there exist nonzero vectors $u_1,\ldots, u_r \in \re^{n+1}$ such that
\[
X = \sum_{i=1}^r u_iu_i^T, \quad
u_1^TQu_1 = \cdots = u_r^TQu_r = \frac{Q \bullet X}{r}\geq 0.
\]
Write $u_i = \bbm \tau_i & w_i^T \ebm^T$ for some $w_i \in \re^n$.
Then $u_i^TQu_i \geq 0$ implies that
\be  \label{eq:qw_i>=0}
q_0 \tau_i^2 + 2 \tau_i q_1^Tw_i + w_i^TQ_2w_i \geq 0.
\ee
If $\tau_i=0$ for some $i$, then $w_i^TQ_2w_i\geq 0$ and hence $w_i=0$
because of negative definiteness of $Q_2$.
But this is also impossible, since otherwise $u_i=[\tau_i\,\,\, w_i^T ]^T$ is a zero vector.
Thus, every $\tau_i\ne 0$. So we can further write
$u_i = \tau_{i} \bbm 1 & v_i^T \ebm^T$.
Then \reff{eq:q(x)dcomp} and \reff{eq:qw_i>=0} imply
that $q(v_i) \ge 0$, and so $v_i \in S(q)$.
%Then $q(u_i)/\tau_i^2\geq 0$ implies $q(v_i)\geq 0$ and $v_i \in S(q)$.
Hence we get
\[
M_1(y) = \tau_{1}^{2} \bbm 1 \\ v_1 \ebm \bbm 1 \\ v_1 \ebm^T + \cdots +
\tau_{r}^{2} \bbm 1 \\ v_r \ebm \bbm 1 \\ v_r \ebm^T,
\]
and it follows as above that $y$ has a
representing measure supported in $S(q)$.
\end{proof}

When $E(q)$ or $S(q)$ is not compact,
the conclusions of Theorem~\ref{thm:2mom-compact} might fail.
However, we can get a sightly weakened version.

\begin{thm} \label{2mom-clos}
Let $y \in \mc{M}_{n,2}$ and let $q(x)$ be a quadratic polynomial.\\
\indent (i) Suppose $E(q)\ne \emptyset$. Then $M_1(y) \succeq 0$ and $L_y(q) = 0$
if and only if $y \in \overline{\mc{R}_{n,2}(E(q))}$. \\
\indent (ii) Suppose $S(q)\ne \emptyset$. Then $M_1(y) \succeq 0$ and $L_y(q) \geq 0$
if and only if $y \in \overline{\mc{R}_{n,2}(S(q))}$.
\end{thm}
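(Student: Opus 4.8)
The plan is to invoke Theorem~\ref{thm:pos=>clos-rep}, which tells us that $y\in\overline{\mc{R}_{n,2}(K)}$ precisely when $L_y$ is $K$-positive. Thus part (i) reduces to showing that ``$M_1(y)\succeq 0$ and $L_y(q)=0$'' is equivalent to $E(q)$-positivity of $L_y$, and part (ii) to showing that ``$M_1(y)\succeq 0$ and $L_y(q)\ge 0$'' is equivalent to $S(q)$-positivity of $L_y$. The necessity implications are immediate. If $L_y$ is $E(q)$-positive, then since $q$ vanishes on $E(q)$, both $q$ and $-q$ are nonnegative there, so $L_y(q)\ge 0$ and $L_y(-q)\ge 0$, forcing $L_y(q)=0$; if $L_y$ is $S(q)$-positive, then $q\ge 0$ on $S(q)$ gives $L_y(q)\ge 0$. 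In either case, for every linear form $\ell$ the square $\ell^2$ lies in $\mc{P}_2$ and is nonnegative on $\re^n$, hence on $K$, so $L_y(\ell^2)=\langle M_1(y)\hat{\ell},\hat{\ell}\rangle\ge 0$ by \reff{eq:Mpq}; this is precisely $M_1(y)\succeq 0$.

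For the sufficiency direction I would fix $p\in\mc{P}_2$ with $p|_K\ge 0$ and prove $L_y(p)\ge 0$, which is exactly $K$-positivity. Given $\eps>0$, Proposition~\ref{prop:F+eps} supplies a scalar $t$ (with $t\ge 0$ in the $S(q)$ case, $t\in\re$ in the $E(q)$ case) for which the quadratic $g_\eps(x):=p(x)+\eps(1+\|x\|_2^2)-tq(x)$ is nonnegative on all of $\re^n$. By \reff{p=[]1^TP[]1} we may write $g_\eps(x)=[x]_1^TP[x]_1$ for some $P=P^T\succeq 0$, so \reff{Lyp=PdotM1(y)} and \reff{PSDdot>=0} give $L_y(g_\eps)=P\bullet M_1(y)\ge 0$. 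Expanding by linearity yields $L_y(p)\ge tL_y(q)-\eps L_y(1+\|x\|_2^2)$. In the $E(q)$ case $L_y(q)=0$, and in the $S(q)$ case $t\ge 0$ and $L_y(q)\ge 0$, so in both cases $L_y(p)\ge -\eps L_y(1+\|x\|_2^2)$. Since $L_y(1+\|x\|_2^2)$ equals the trace of $M_1(y)$, a fixed nonnegative number, letting $\eps\to 0^+$ forces $L_y(p)\ge 0$. Hence $L_y$ is $K$-positive, and Theorem~\ref{thm:pos=>clos-rep} completes the proof.

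I expect the substantive content to be entirely absorbed by Proposition~\ref{prop:F+eps} (which in turn rests on the Sturm--Zhang decomposition, Proposition~\ref{prop:SZ-decomp}, and the $S$-lemma); the only points requiring care here are that its hypotheses match ours exactly -- $E(q)\ne\emptyset$, resp.\ $S(q)\ne\emptyset$, together with nonnegativity of $p$ on the relevant set -- and that the sign of the multiplier $t$ it returns is compatible with the sign information we have on $L_y(q)$: $t\ge 0$ pairs with $L_y(q)\ge 0$ in the semialgebraic case, while in the variety case $t$ is unconstrained but harmless because $L_y(q)=0$. Everything else is the routine $\eps\to 0$ limit together with the standard dictionary between nonnegative quadratics and positive semidefinite symmetric matrices recorded in \reff{eq:p-Qrep}, \reff{Lyp=PdotM1(y)}, and \reff{PSDdot>=0}.
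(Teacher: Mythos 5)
Your proof is correct and takes essentially the same route as the paper: in both, the substance is Proposition~\ref{prop:F+eps} combined with writing the resulting globally nonnegative quadratic as $[x]_1^TP[x]_1$ with $P=P^T\succeq 0$ and using $P\bullet M_1(y)\ge 0$, followed by letting $\eps\to 0$. The only difference is organizational: the paper re-runs the Minkowski separation argument inline (assuming $y\notin\overline{\mc{R}_{n,2}(K)}$ and extracting a separating quadratic), whereas you verify $K$-positivity of $L_y$ for every $p$ nonnegative on $K$ and then cite Theorem~\ref{thm:pos=>clos-rep} — a slightly cleaner factoring of the identical argument.
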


\begin{proof}
(i) The sufficiency direction is obvious, so we only need prove necessity.
Suppose to the contrary that  $M_1(y) \succeq 0$ and $L_y(q) = 0$, but
$y \not\in \overline{\mc{R}_{n,2}(E(q))}$.
Since $\overline{\mc{R}_{n,2}(E(q))}$ is a closed convex cone,
Minkowski's separation theorem implies that
there exists a nonzero polynomial $p\in \mathcal{P}_{2}$
such that
\[
L_{y}(p)\equiv \hat{p}^T y < 0, \quad \text{and} \quad
\hat{p}^T w \geq 0, \, \forall \, w\in \overline{\mc{R}_{n,2}(E(q))}.
\]
For $1\le i\le n$, let $y_{2e_{i}}$ denote the element of $y$
corresponding to the monomial $x_{i}^{2}$.
Choose $\eps>0$ small enough so that
\begin{equation} \label{eq:hatp+eps<0}
\hat{p}^T y + \eps ( 1 + \sum_{i=1}^n y_{2e_i})L_{y}(1) < 0,
\end{equation}
and define the nonzero polynomial
\begin{equation} \label{eq:pt=phat+eps}
\tilde{p}(x) = \hat{p}^T [x]_2 + \eps (1+\|x\|_2^2).
\end{equation}
Since,
  for each $x\in E(q)$,
 the monomial vector $[x]_{2}$ belongs to $ \mc{R}_{n,2}(E(q))$
(with $E(q)$-representing measure $\delta_{x}$),
the polynomial $\hat{p}^{T}[x]_{2}$ is nonnegative on $E(q)$.
By Proposition~\ref{prop:F+eps}-(b), there exists
$t\in \re$  such that
\[
\hat{p}^{T}[x]_{2}  + \eps (1+\|x\|_2^2) -t q(x)\ge 0 ~~ \forall x\in \re^n.
\]
It follows from \reff{p=[]1^TP[]1} that there exists
a matrix $P$, with $P=P^T \succeq 0$, such that
\[
\hat{p}^{T}[x]_{2}  + \eps (1+\|x\|_2^2) -t q(x) =  [x]_1^TP[x]_1
 ~~ \forall x\in \re^n,
\]
whence
\[
\tilde{p}(x) = [x]_1^TP[x]_1 + t q(x).
\]
Since
$M_{1}(y)\succeq 0$ and $L_{y}(q) = 0$,
applying $L_{y}$ on both sides of the above
(see equation \reff{Lyp=PdotM1(y)}) implies that
\[
L_y(\tilde{p}) =  P \bullet M_1(y)  + t L_y(q) = P \bullet M_1(y)  \geq 0.
\]
However, from \reff{eq:hatp+eps<0}-\reff{eq:pt=phat+eps} we have
\[
L_y(\tilde{p}) = \hat{p}^{T}y + \eps (1 + \sum_{i=1}^n y_{2e_i} )L_{y}(1) < 0,
\]
which is a contradiction.
So we must have
$y \in \overline{\mc{R}_{n,2}(E(q))}$.

(ii) Sufficiency is again obvious, so we focus on necessity.
%The sufficiency direction is also obvious.
%Only the necessary direction need to be proved.
The proof is very similar to the argument of (i),
but we replace $E(q)$ by $S(q)$.
Thus, the polynomial $\hat{p}^{T}[x]_{2}$ is now nonnegative on $S(q)$.
Using Proposition 4.4-(a), it follows as above that
there exists $t\ge 0$  and a matrix $P$ with $P = P^{T}\succeq 0$,
such that   $\tilde{p}(x) = [x]_1^TP[x]_1 + t q(x)$. Since $t\ge 0$
and $L_{y}(q)\ge 0$, it follows as before that $L_{y}(\tilde{p})\ge 0$,
which leads to the same contradiction as in (i).
\end{proof}

Theorem~\ref{2mom-clos} implies that if $q$ is a quadratic polynomial
and if $M_1(y) \succeq 0$ and $L_y(q)=0$ (resp. $L_y(q)\geq 0$),
then $y$ is in the closure of the quadratic moment sequences  which admit
representing measures supported in $E(q)$ (resp. $S(q)$).
But this does not necessarily imply that
$y$ admits a representing measure supported in $E(q)$ or $S(q)$,
as the following example shows.

\begin{exm}
Let $n=2$ and let $y \in \mc{M}_{2,2}$ be the quadratic
moment sequence such that
\[
M_1(y) = \bbm 1 & 1 & 1 \\ 1 & 1 & 1\\ 1 & 1 & 2 \ebm.
\]
Let $1$, $X_{1}$, $X_{2}$ denote the columns of $M_{1}(y)$.
Obviously, $M_1(y)$ is positive semidefinite with $rank~M_{1}(y)=2$,
so $y$ admits $2$-atomic representing measures by Theorem 4.5.
Since $1=X_{1}$,
 Proposition~3.1 of \cite{CF96} implies that any representing measure $\mu$
must be supported in the variety $\{(x_{1},x_{2})\in \re^2: x_{1} =1 \}$.

 Let $q(x) = x_2 -x_1^2$.
Then $S(q)$ is convex but noncompact,
and $E(q)$ is nonconvex and noncompact.
Note that $L_y(q) = y_{01} - y_{20} = 0$, so of course $L_y(q) \geq 0$.
But $y$ does not have a representing measure $\mu$ supported in
either $E(q)$ or $S(q)$.
Indeed, suppose a
representing measure $\mu$ with
 $supp~\mu \subseteq S(q)$ exists.
For any $x=(x_1,x_2) \in supp~\mu \subseteq S(q)$,
we must have $x_1 = 1$ and $x_2 \geq 1$.
%Since the columns of $M_1(y)$ satisfies the relation
%$X_1-1=0$, the measure $\mu$ must be supported in the line $X_1 = 1$.
%So $\mu$ is supported in the set
%$K = \{x: x_1 = 1, x_2 \geq 1\}$.
Then the relation
\[
\int_{\re^2} x_2 d\mu(x) = y_{01} = 1,
\]
together with $y_{00}=1$,
implies that $x_2=1$ on the support of $\mu$.
So $\mu$ is supported at the single point $(1,1)$,
which is obviously false.
Therefore, $y$ does not have a representing measure $\mu$
supported in $S(q)$ or $ E(q)$.

In keeping with Theorem 4.7, we next show that
an arbitrarily small perturbation can be applied to make
the perturbed $y$ have a representing measure supported in
$E(q) (\subset S(q))$.
For $1> \eps >0$, let
the moment sequence $\bar{y}(\eps)$ be defined by
\[
M_1(\bar{y}(\eps)) =
(1-\eps) \bbm 1 \\ 1 \\ 1 \ebm \bbm 1 \\ 1 \\ 1 \ebm^T +
\eps \bbm 1 \\ \eps^{-1/4} \\  \eps^{-1/2} \ebm
\bbm 1 \\ \eps^{-1/4} \\  \eps^{-1/2} \ebm^T
\]
\[ = \bbm 1 & 1-\eps + \eps^{3/4} & 1+\eps^{1/2}-\eps \\
          1-\eps + \eps^{3/4}  & 1+ \eps^{1/2}-\eps & 1+\eps^{1/4}-\eps \\
          1 + \eps^{1/2}-\eps  & 1+\eps^{1/4}-\eps & 2-\eps \ebm .
\]
We  see that $\bar{y}(\eps) \to y$ as $\eps \to 0$,
and $\bar{y}(\eps)$ has the $2$-atomic $E(q)$-representing measure
$$(1-\eps) \delta_{(1,1)} + \eps \delta_{(\eps^{-\frac{1}{4}},\eps^{-\half})}.$$
\qed
\end{exm}

Despite the preceding example,
 if, in Theorem~\ref{2mom-clos}, the quadratic moment sequence $y$
is such that $M_1(y) \succ 0$ and
$L_y(q)=0$ (resp. $L_y(q) >0$),
then $y$ does have a representing measure supported in $E(q)$ (resp. $S(q)$).
The following result thus provides some affirmative evidence
for Question 1.2.

%************I HAVE TO REVIEW THIS AGAIN*******************

\begin{thm} \label{2mom-int=>rep}
Let $y \in \mc{M}_{n,2}$ and let $q(x)$ be a quadratic polynomial.\\
(i) If $E(q)\ne \emptyset$, $M_1(y) \succ 0$ and $L_y(q) = 0$,
then $y \in \mc{R}_{n,2}(E(q))$. \\
(ii) If $S(q)\ne \emptyset$, $M_1(y) \succ 0$ and $L_y(q) > 0$,
then $y \in \mc{R}_{n,2}(S(q))$.
\end{thm}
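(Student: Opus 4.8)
The plan is to deduce both parts from Theorem~\ref{2mom-clos}, which describes the closures $\overline{\mc{R}_{n,2}(E(q))}$ and $\overline{\mc{R}_{n,2}(S(q))}$ by the conditions $M_1(z)\succeq 0$ together with $L_z(q)=0$ (resp.\ $L_z(q)\ge 0$), combined with Lemma~\ref{lem:clos-int} (a convex set and its closure have the same interior). The obstruction to simply invoking Theorem~\ref{thm:strpos=>rep} is that $E(q)$ and $S(q)$ need not be determining sets, so the relevant closed cone may have empty interior in $\re^\eta$; the remedy is to carry out the ``interior equals interior of closure'' argument \emph{relative to the affine hull} of that cone, and the work lies in verifying that $y$ belongs to that relative interior.

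\emph{Part (i).} By Theorem~\ref{2mom-clos}(i), $\overline{\mc{R}_{n,2}(E(q))}=\{z\in\re^\eta:M_1(z)\succeq 0,\ L_z(q)=0\}$, so $y$ belongs to this set. Put $\mc{L}:=\mathrm{span}\{[x]_2:x\in E(q)\}$. By the Bayer-Teichmann theorem \cite{BT}, every element of $\mc{R}_{n,2}(E(q))$ is a finite nonnegative combination of vectors $[x]_2$ with $x\in E(q)$, so $\mc{L}$ is the linear span, hence the affine hull, of $\mc{R}_{n,2}(E(q))$ and of its closure; in particular $\overline{\mc{R}_{n,2}(E(q))}\subseteq\mc{L}$ since $\mc{L}$ is closed. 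Because $q$ vanishes identically on $E(q)$ we have $L_{[x]_2}(q)=q(x)=0$ for $x\in E(q)$, hence $L_z(q)=0$ for every $z\in\mc{L}$; thus the equality constraint is automatic on $\mc{L}$ and $\overline{\mc{R}_{n,2}(E(q))}=\{z\in\mc{L}:M_1(z)\succeq 0\}$. Since $z\mapsto M_1(z)$ is linear and positive definiteness is an open condition, $\{z\in\mc{L}:M_1(z)\succ 0\}$ is relatively open in $\mc{L}$ and contains $y$; hence $y$ lies in the interior of $\overline{\mc{R}_{n,2}(E(q))}$ relative to $\mc{L}$. Identifying $\mc{L}$ with $\re^{\dim\mc{L}}$ and applying Lemma~\ref{lem:clos-int} inside $\mc{L}$ (the closure of $\mc{R}_{n,2}(E(q))$ in $\mc{L}$ equals $\overline{\mc{R}_{n,2}(E(q))}$ because $\mc{L}$ is closed), we conclude that $y$ lies in the relative interior of $\mc{R}_{n,2}(E(q))$, and in particular $y\in\mc{R}_{n,2}(E(q))$.

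\emph{Part (ii).} I would split according to whether $q$ takes a positive value. If $q(x)\le 0$ for all $x$, then $S(q)=E(q)$, so the hypotheses give $M_1(y)\succeq 0$ and $L_y(q)\ge 0$, whence $y\in\overline{\mc{R}_{n,2}(S(q))}=\overline{\mc{R}_{n,2}(E(q))}$ by Theorem~\ref{2mom-clos}; but then $L_y(q)=0$, contradicting $L_y(q)>0$, so this case does not arise. Otherwise $q(\xi)>0$ for some $\xi$, and Theorem~\ref{2mom-clos}(ii) gives $\overline{\mc{R}_{n,2}(S(q))}=\{z\in\re^\eta:M_1(z)\succeq 0,\ L_z(q)\ge 0\}$. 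The set $\{z:M_1(z)\succ 0,\ L_z(q)>0\}$ is open (a finite intersection of preimages of open sets under continuous linear maps), is contained in $\overline{\mc{R}_{n,2}(S(q))}$, and contains $y$; hence $y\in\mathrm{int}\,\overline{\mc{R}_{n,2}(S(q))}$, and Lemma~\ref{lem:clos-int} gives $y\in\mathrm{int}\,\mc{R}_{n,2}(S(q))\subseteq\mc{R}_{n,2}(S(q))$. The single delicate point is the observation, used in Part~(i), that the lone equality constraint $L_z(q)=0$ defining $\overline{\mc{R}_{n,2}(E(q))}$ is exactly what is absorbed by passing to $\mc{L}=\mathrm{span}\{[x]_2:x\in E(q)\}$, after which only the inequality $M_1(z)\succeq 0$ remains, and $y$ satisfies that strictly.
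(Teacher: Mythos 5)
Your proof is correct and follows essentially the same route as the paper's: identify the closure via Theorem~\ref{2mom-clos}, verify that $y$ is a (relative) interior point of it, and invoke Lemma~\ref{lem:clos-int}. The only differences are cosmetic: in part (i) the paper works relative to the hyperplane $\mc{N}(q)=\{z: L_z(q)=0\}$ rather than your span $\mc{L}$ (the two coincide whenever such a $y$ exists), and your case split in part (ii) is unnecessary since Theorem~\ref{2mom-clos}(ii) applies whenever $S(q)\neq\emptyset$.
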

\begin{proof}
(i) Define the affine subspace $\mc{N}(q)$ and set $\mc{F}_E$ as follows:
\[
\mc{N}(q) = \{ y \in \mc{M}_{n,2}:   L_y(q) = 0 \}, \quad
%\]
%Let $\mc{F}_E$ be the following convex set
%\[
\mc{F}_E=\{ y \in \mc{N}(q):  M_1(y) \succeq 0\}.
\]
Note that $\mc{R}_{n,2}(E(q))$ and $\mc{F}_E$ are both convex sets
contained in the space $\mc{N}(q)$.
Theorem~\ref{2mom-clos} says that $\mc{F}_E = \overline{\mc{R}_{n,2}(E(q))}$.
%Note that $\mc{F}_E$ is contained in the affine subspace
%$\{ y \in \mc{M}_{n,2}: L_y(q) = 0 \}$ and hence does not have interior.
%However, after eliminating some variables from the equation $L_y(q)=0$,
%$\mc{F}_E$ can be thought of having nonempty interior, i.e.,
%$\mc{F}_E$ has nonempty relative interior.
If $M_1(y) \succ 0$,
then $y$ lies in the interior of $\mc{F}_E$.
By Lemma~\ref{lem:clos-int}, we know $y \in \mc{R}_{n,2}(E(q))$.

(ii) Let $\mc{F}_S$ be the following convex set
\[
\mc{F}_S=\{ y \in \mc{M}_{n,2}:  M_1(y) \succeq 0, L_y(q) \geq 0 \}.
\]
Theorem~\ref{2mom-clos} says that $\mc{F}_S = \overline{\mc{R}_{n,2}(S(q))}$.
If $M_1(y) \succ 0$ and $L_y(q)>0$,
then $y$ lies in the interior of $\mc{F}_S$.
Hence Lemma~\ref{lem:clos-int} implies $y \in \mc{R}_{n,2}(S(q))$.
\end{proof}

Using Theorem~\ref{2mom-int=>rep}, we can now show that Question 1.2
has an affirmative answer when $d=1$ and $K=E(q)$ or $K=S(q)$
for a quadratic polynomial $q(x)$.

\begin{corollary}
Let $y \in \mc{M}_{n,2}$ and let $q(x)$ be a quadratic polynomial.  \\
(i) Suppose $E(q) \ne \emptyset$.  If $M_{1}(y)\succ 0$ and $L_{y}$ is
$E(q)$-positive, then
$y$ has an $E(q)$-representing measure. \\
(ii)  Suppose $S(q) \ne \emptyset$. If $M_{1}(y)\succ 0$ and $L_{y}$ is
$S(q)$-positive, then
$y$ has an $S(q)$-representing measure.
\end{corollary}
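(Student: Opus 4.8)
The plan is to reduce both statements to Theorem~\ref{2mom-int=>rep}. For part~(i) I would simply note that, since $q$ vanishes identically on $E(q)$, both $q$ and $-q$ are nonnegative on $E(q)$; hence $E(q)$-positivity of $L_y$ forces $L_y(q)\ge 0$ and $L_y(-q)\ge 0$, i.e.\ $L_y(q)=0$. With $E(q)\ne\emptyset$ and $M_1(y)\succ 0$ in hand, Theorem~\ref{2mom-int=>rep}(i) then produces an $E(q)$-representing measure for $y$, so part~(i) is essentially a one-line consequence of the earlier results.

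For part~(ii) I would proceed as follows. Since $q\ge 0$ on $S(q)$, $S(q)$-positivity of $L_y$ gives $L_y(q)\ge 0$. If $L_y(q)>0$, then Theorem~\ref{2mom-int=>rep}(ii) applies verbatim and we are done. The remaining case $L_y(q)=0$ is the substantive one, and I would handle it in two steps. First, I would check that $E(q)\ne\emptyset$ here: otherwise $q$ is sign-definite on $\re^n$; the case $q<0$ everywhere contradicts $S(q)\ne\emptyset$, while the case $q>0$ everywhere is impossible because then the Gram matrix $Q$ of $q$ (with $q=[x]_1^TQ[x]_1$, cf.\ \reff{p=[]1^TP[]1}) is positive semidefinite and nonzero, so $L_y(q)=Q\bullet M_1(y)>0$ by \reff{PSDdot>=0} and $M_1(y)\succ 0$, contradicting $L_y(q)=0$. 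Second, I would upgrade $S(q)$-positivity to $E(q)$-positivity: given $p\in\mc{P}_2$ with $p|_{E(q)}\ge 0$, Proposition~\ref{prop:F+eps}(b) provides, for each $\eps>0$, a scalar $t_\eps\in\re$ with $p(x)+\eps(1+\|x\|_2^2)-t_\eps q(x)\ge 0$ on $\re^n$, hence by \reff{p=[]1^TP[]1} a symmetric $P_\eps\succeq 0$ with $p+\eps(1+\|x\|_2^2)-t_\eps q=[x]_1^TP_\eps[x]_1$. Applying $L_y$ to both sides, and using \reff{Lyp=PdotM1(y)}, $M_1(y)\succeq 0$, and $L_y(q)=0$,
\[
L_y(p)=P_\eps\bullet M_1(y)-\eps\,L_y(1+\|x\|_2^2)\;\ge\;-\eps\,L_y(1+\|x\|_2^2),
\]
and since $L_y(1+\|x\|_2^2)=y_0+\sum_{i=1}^n y_{2e_i}$ is a positive constant (a sum of diagonal entries of $M_1(y)\succ 0$), letting $\eps\to 0$ gives $L_y(p)\ge 0$. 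Thus $L_y$ is $E(q)$-positive, and part~(i), already proved, supplies an $E(q)$-representing measure for $y$, which is a fortiori supported in $S(q)$.

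I expect the main obstacle to be precisely the borderline case $L_y(q)=0$ in part~(ii). Theorem~\ref{2mom-int=>rep} only covers the ``open'' situations in which the hypotheses push $y$ into the interior of the relevant convex cone; when $L_y(q)=0$, $y$ sits on the boundary of $\{\,M_1(y)\succeq 0,\ L_y(q)\ge 0\,\}$ and one cannot invoke Lemma~\ref{lem:clos-int} directly. The device that saves the argument is the observation that on this boundary slice $S(q)$-positivity is in fact no weaker than the a priori stronger $E(q)$-positivity, and establishing this equivalence is exactly what forces one to use the quadratic Positivstellensatz of Proposition~\ref{prop:F+eps}(b) together with the $\eps\to 0$ passage above. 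I would also point out that this case is genuinely non-vacuous under the hypotheses---for example, with $n=2$ and $q(x)=x_1x_2$, one can have $M_1(y)\succ 0$ with zero $x_1x_2$-moment, so that $L_y(q)=0$---and hence it cannot be dismissed.
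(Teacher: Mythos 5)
Your proof is correct, and part (i) coincides with the paper's argument word for word. Part (ii), however, takes a genuinely longer route in the borderline case $L_y(q)=0$ than is necessary. The paper's proof of (ii) splits on whether $E(q)$ is empty: if $E(q)\ne\emptyset$ and $L_y(q)=0$, it invokes Theorem~\ref{2mom-int=>rep}(i) \emph{directly} -- note that the hypotheses of that theorem are exactly $E(q)\ne\emptyset$, $M_1(y)\succ 0$ and $L_y(q)=0$, with no positivity assumption on $L_y$ -- and this already yields a measure supported in $E(q)\subseteq S(q)$; if $E(q)=\emptyset$, then $S(q)\ne\emptyset$ forces $S(q)=\re^n$ and Theorem~\ref{thm:deg2mom} finishes. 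Your treatment of the same case first rules out $E(q)=\emptyset$ via the Gram-matrix argument (correct: for a quadratic the symmetric Gram matrix is unique, so $q>0$ everywhere gives $Q\succeq 0$, $Q\ne 0$, hence $Q\bullet M_1(y)>0$ when $M_1(y)\succ 0$), and then upgrades $S(q)$-positivity to $E(q)$-positivity via Proposition~\ref{prop:F+eps}(b) and the $\eps\to 0$ limit -- essentially re-running the necessity argument of Theorem~\ref{2mom-clos}(i) -- only to feed the conclusion into part (i), whose sole function is to recover $L_y(q)=0$, a fact you already had. So the Positivstellensatz detour is sound but superfluous, and your closing remark that Theorem~\ref{2mom-int=>rep} ``only covers the open situations'' misreads its statement: part (i) of that theorem is precisely the boundary slice $L_y(q)=0$, with positive definiteness of $M_1(y)$ serving as the relative-interior condition inside the affine subspace $\mc{N}(q)$. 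What your version buys is a self-contained verification that $E(q)\ne\emptyset$ whenever $M_1(y)\succ 0$ and $L_y(q)=0$ (the paper sidesteps this by handling $E(q)=\emptyset$ through $S(q)=\re^n$); what it costs is an extra layer of machinery that the earlier theorems were designed to make unnecessary.
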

\begin{proof}
(i) From Theorem~\ref{2mom-int=>rep} (i),
it suffices to show that $L_{y}(q) = 0$.
Since $L_{y}$ is $E(q)$-positive, we have $L_{y}(q) \ge 0$ and
$L_{y}(-q) \ge 0$, so $L_{y}(q) = 0$.

(ii)
Suppose first that $E(q) \ne \emptyset$.
Since $L_{y}$ is $S(q)$-positive, $L_{y}(q) \ge 0$. If
$L_{y}(q) = 0$, Theorem~\ref{2mom-int=>rep} (i)
implies that y has a representing measure supported in
$E(q) \subseteq S(q)$.
If $L_{y}(q) > 0$, then Theorem~\ref{2mom-int=>rep} (ii) shows that $y$ has a
representing measure supported in $S(q)$.
Suppose next that $E(q) = \emptyset$. Since $S(q) \ne \emptyset$,
then $S(q) = \re^n$, so in this case the result follows from
Theorem 4.5.
\end{proof}

%
%
%\bigskip
%\noindent
%{\bf Acknowledgement} \quad
%The authors would very much like to thank Igor Kelp for kindly proposing
%the present proof for Lemma~\ref{lem:clos-int} in discussions about this work.
%

\end{document}